\renewenvironment{abstract}
{\begin{center}
		\textbf{Abstract}
	\end{center}
	\list{}{ 
		\setlength{\leftmargin}{0.05\textwidth}
		\setlength{\rightmargin}{\leftmargin}
	}
	\item\relax} 
{\endlist}
\newenvironment{keywords}
{\begin{trivlist}\item[]{\bfseries Keywords.}}
	{\end{trivlist}}
\numberwithin{equation}{section}
\theoremstyle{plain} 
\newtheorem{theorem}{Theorem}[section] 
\newtheorem{lemma}[theorem]{Lemma}
\newtheorem{proposition}[theorem]{Proposition}
\newtheorem{definition}[theorem]{Definition}
\newtheorem{remark}[theorem]{Remark}
\newcommand{\N}{\mathbb{N}}
\newcommand{\R}{\mathbb{R}}
\newcommand\restr[2]{{
		\left.\kern-\nulldelimiterspace 
		#1 
		\vphantom{\big|} 
		\right|_{#2} 
}}
\newcommand{\norm}[2][]{\left\|#2\right\|_{#1}}
\newcommand{\Lpnorm}[2][]{\ifthenelse{\equal{#1}{}}{\norm{#2}_{L^p}}{\norm{#2}_{L^p(#1)}}}
\newcommand{\Hknorm}[2][]{\ifthenelse{\equal{#1}{}}{\norm{#2}_{H^k}}{\norm{#2}_{H^k(#1)}}}
\newcommand{\setdef}[2]{\left\lbrace #1 \ : \ #2 \right\rbrace}
\renewcommand{\Re}{\text{\normalfont Re}}
\renewcommand{\Im}{\text{\normalfont Im}}
\newcommand{\QV}[2][]{\ifthenelse{\equal{#1}{}}{\langle #1 \rangle}{\langle #1,#2 \rangle}}
\newcounter{author}
\renewcommand*\author[1]{%
	\stepcounter{author}%
	\ifnum\c@author=1
	\gdef\@author{#1}%
	\else
	\xdef\@author{\unexpanded\expandafter{\@author\and#1}}%
	\fi
	\csgdef{author@\the\c@author}{#1}}
\newcommand*\email[1]{%
	\csgdef{email@\the\c@author}{#1}}
\newcommand*\address[1]{%
	\csgdef{address@\the\c@author}{#1}}
	\xdef\author@count{\the\c@author}%
\newcommand*\print@authors{%
	\ifnum\c@author>\author@count
	\else
	\print@author{\the\c@author}%
	\advance\c@author by 1
	\expandafter\print@authors
	\fi}
\newcommand*\print@author[1]{%
	\par\medskip
	\begin{tabular}{@{}l@{}}%
		\textsc{\csuse{author@#1}}\\
		\csuse{address@#1}\\
		\textit{E-mail address}: \csuse{email@#1}
\end{tabular}}
\title{\textrm{\textbf{\Large Description of Chemical Systems by means of Response Functions}}}
\author{\large Eugenia Franco}
\address{University of Bonn, Institute for Applied Mathematics \\ Endenicher Allee 60 \\ D-53115 Bonn \\ GERMANY}
\email{franco@iam.uni-bonn.de}
\author{\large Bernhard Kepka}
\address{University of Bonn, Institute for Applied Mathematics \\ Endenicher Allee 60 \\ D-53115 Bonn \\ GERMANY}
\email{kepka@iam.uni-bonn.de}
\author{\large Juan J. L. Vel\'{a}zquez}
\address{University of Bonn, Institute for Applied Mathematics \\ Endenicher Allee 60 \\ D-53115 Bonn \\ GERMANY}
\email{velazquez@iam.uni-bonn.de}
\date{\normalsize \today}
\begin{document}
	\maketitle
	
	\begin{abstract}
 In this paper we introduce a formalism that allows to describe the response of a part of a biochemical system in terms of renewal equations.
 In particular, we examine under which conditions the interactions between the different parts of a chemical system, described by means of linear ODEs, can be represented in terms of renewal equations.
 We show also how to apply the formalism developed in this paper to some particular types of linear and non-linear ODEs, modelling some biochemical systems of interest in biology (for instance, some time-dependent versions of the classical Hopfield model of kinetic proofreading).
We also analyse some of the properties of the renewal equations that we are interested in, as the long-time behaviour of their solution. 
Furthermore, we prove that the kernels characterising the renewal equations derived by biochemical system with reactions that satisfy the detail balance condition belong to the class of completely monotone functions.
			\end{abstract}
 		
		\begin{keywords}
		Renewal Equations; Response Functions; non-Markovian Dynamics; Biochemical Systems 	
		\end{keywords}
	\tableofcontents
	
\section{Introduction}
A basic problem in biology is to determine the response of a system (that might be a cell, a cell organelle, a specific biochemical network or a tissue) to a chemical signal. The response typically might be a chemical, electrical or mechanical output. 
A formalism relating the input and the output using the so-called response-time distribution has been proposed in \cite{thurley2018modeling}. 
In particular, this formalism has been applied there to study signaling mechanisms between different immune cells. 
The goal of this paper is to formulate precise mathematical conditions which allow to use the formalism of \cite{thurley2018modeling} to model general biochemical systems. 

In the simplest model the relation between the input signal $ I(t) $ and the output $ R(t) $ of a system is given by
\begin{align}\label{A1}
	\begin{split}
		\frac{dN(t) }{dt}&= R(t)  \\
		R\left( t\right) &=R_{0}\left( t\right) +\int_{0}^{t}\psi\left( t-s, N(s) \right)
		I\left( s\right) ds. 
	\end{split}
\end{align}
In this formula $N(t)$ is the density of elements in the system, $R_{0}\left( t\right) $ is a transient response, associated
to the initial state of the system and the integral term describes the
response to the input $I\left( t\right) .$ 
The function $R_0$ is also called forcing function, see for instance \cite{diekmann1998formulation}. 
The input/output functions $I\left(
t\right) $ and $R\left( t\right) $ might be vectors, if the system
under consideration has several inputs and several outputs. Therefore, in general, $\psi
$ would be a matrix.

Notice that the formalism of response functions, using equations like \eqref{A1},  is particularly suited to study
biological systems, specifically biochemical systems. Indeed, due to the
large number of substances involved in these processes, it is often
difficult to determine all the reactions, as well as the relevant chemical coefficients, that
would be needed to model the system in detail. On the
other hand, response equations with the form (\ref{A1}) (or non-linear
versions of it) require only the knowledge of the function $\psi $, which, in principle, can be determined experimentally from
measurements of the behaviour of the system.

It is worth to mention that systems with the form \eqref{A1} have been extensively used in the modeling of biological systems. The earliest example appears in population dynamics, specifically in demography, see \cite{sharpe1911problem}. 
 Similar approaches to the one in  \cite{thurley2018modeling} can be found in models of immune systems, see for instance  \cite{busse2010competing}, in models that describe the production of $Ca^{2+}$, see \cite{moenke2012hierarchic,thurley2011derivation}, in models of kinetic proofreading \cite{bel2009simplicity} and in models of the circadian rhythms \cite{thurley2017principles}. 

In this paper, motivated by the work in \cite{thurley2018modeling}, we analyze under which conditions it is possible to study the interactions of different parts of a biochemical system by means of a set of response functions that generalizes (\ref{A1}). 
In the case of a linear system, our approach consists in considering a (large) subset of reactions as an unique object. In this paper, we call this object \textit{compartment}. Our interest is to understand the interactions between different compartments. In fact, these interactions are described in detail by response functions $\psi$, that can be derived from the reactions taking place inside each compartment. Once the response functions have been derived, one can study the resulting response function equation and ignore the detailed information on the processes taking place inside each compartment.

We anticipate that the evolution of the number of elements $N_\alpha $ in the compartment $\alpha $ will be described, in the linear case, by the following system of equations 
\begin{align}\label{eq:IntroConcentrationOneEntrance}
	\frac{d N_\alpha (t) }{dt } = B_\alpha(t) - D_\alpha(t), \quad N_\alpha (0)=N_\alpha^0
\end{align}
where $B_\alpha$ satisfies the renewal equation
\begin{equation} \label{eq:IntroRE one entrance} 
B_\alpha (t) = B_\alpha^0(t) +  \sum_{\beta \neq \alpha}  \int_0^t B_\beta (s ) \Phi_{ \beta \alpha }(t-s) ds 
\end{equation}
and where $D_\alpha  $ is a function of $B_\alpha $, namely 
\begin{equation} \label{eq:IntroConcentrationOneOutgoing}
    D_\alpha(t)=D^0_\alpha (t) + \int_0^t k_\alpha(t-s) B_\alpha(s). 
\end{equation}
In our study, the variable $ \alpha\in X $ is usually a compartment of a larger system $ \Omega $. More precisely, $ \Omega $ is the set of all possible states of elements in the system. Then, $ X $ is a partition of $ \Omega $, i.e. $ X $ is the set of all compartments $ \alpha $. We assume that $ \Omega $, and hence also $ X $, is finite. In particular, \eqref{eq:IntroConcentrationOneEntrance},  \eqref{eq:IntroRE one entrance}, \eqref{eq:IntroConcentrationOneOutgoing} is a finite system of equations.

We will refer from now on to the equations \eqref{eq:IntroConcentrationOneEntrance},  \eqref{eq:IntroRE one entrance}, \eqref{eq:IntroConcentrationOneOutgoing} as Response Function Equations (RFEs).
Notice that we are assuming that the chemicals in the system are in different states and the changes in the density of elements in a certain state is only due to jumps from one state to another. Hence, in this paper we restrict ourselves to conservative system, i.e. systems for which the total number of elements is constant in time. However, it would be possible to study also non-conservative systems, similar to the ones which appear naturally in population dynamics, see e.g. \cite{diekmann1998formulation}.

The functions $B_{\alpha },\ D_{\alpha }$ yield the total
fluxes of elements from any compartment $\beta \in X $
towards $ \alpha \in X, \ \alpha \neq \beta $ and from the compartment $\alpha $ towards any other compartment $\beta \in X,\ \beta \neq \alpha $ respectively. 
Notice that the set of
functions $\left\{ B_{\alpha }\right\},\ \left\{ D_{\alpha
}\right\} $ are related by means of input-response equations
of the form (\ref{A1}). More precisely, a flux $B_{\beta }(s)$ arriving to
the compartment $\beta \in X$ at time $s$ yields a response $B_{\beta }(s)\Phi_{\beta \alpha
}(t-s)$ at the compartment $\alpha $ at time $t.$ The formula yielding $%
B_{\alpha }(t)$ in \eqref{eq:IntroRE one entrance} takes into account the sum of the fluxes
arriving to the compartment $\beta $ for all times $s\in \left( 0,t\right) .$

The function $D^0_\alpha $ is the total flux of elements, that were already in the compartment $\alpha$ at time $0$,  to any compartment $\beta \in X $. 
Similarly $B_\alpha^0$ is the flux of elements arriving in $\alpha$ from any compartment $\beta$, given that they where in $\beta $ already at time equal zero. 

Since we assume that $\sum_{\alpha \in X }N_{\alpha }$ is conserved, we assume $\sum_{\alpha\in X}D_\alpha^0 = \sum_{\alpha\in X}B_\alpha^0 $ as well as 
\begin{equation} \label{kernel condition for conservation of numbers} 
	k_\alpha= \sum_{\beta \in X \setminus \{ \alpha \} } \Phi_{\alpha \beta }.
\end{equation}
Furthermore, for consistency, we need to assume
\begin{equation} \label{consistency ID}
    N_\alpha (0) \geq \int_0^\infty D^0_\alpha (t) dt
\end{equation}
for every $\alpha \in X $. This is a natural assumption that guarantees that the out-flux from the compartment $\alpha $ of the elements, which where in $\alpha $ already at time $t=0$, must be less than the total number of elements in $\alpha $ at time $0$. 

Finally, we assume
\begin{align}\label{eq:IntodKernelConservation}
    \sum_{\beta\in X\setminus\{\alpha\}} \int_0^\infty \Phi_{\alpha \beta }(t)\, dt=1,
\end{align}
if it is possible to go from compartment $\alpha$ to any other compartment $\beta$. Otherwise, if $\alpha $ is not connected to any $\beta \in X $ then 
\begin{equation} \label{eq:ApproxKernelsOneEntranceConservationzeroIntro}
 \sum_{\beta\in X\setminus \{\alpha\}} \int_0^\infty \Phi_{ \alpha\beta }(t)\, dt = 0. 
\end{equation}
Assumption \eqref{eq:IntodKernelConservation} guarantees that an element moves away from a certain state in finite time with probability one. 

Equations of the form \eqref{eq:IntroConcentrationOneEntrance},  \eqref{eq:IntroRE one entrance}, \eqref{eq:IntroConcentrationOneOutgoing} are one of the main objects of our study in this paper. Although, in this paper we are mainly concerned with linear models, we also introduce some non-linear variants.

Let us recall that equations of the form \eqref{eq:IntroRE one entrance} are commonly referred to as \textit{renewal equations}, (REs). Such equations, as well as some non-linear versions of them, have been extensively studied in the mathematical literature. For instance, they have been used to analyze structured populations, see e.g. \cite{diekmann1998formulation,feller1967introduction,franco2023modelling,franco2021one,Gripenberg1990VolterraIntEq}, or epidemiological models, see e.g. \cite{diekmann2013mathematical,inaba2017age,kermack1927contribution}. Similar equations, in which $\Phi_{\alpha \beta } $ contain in addition a dependence on the concentrations $\left\{ N_{\gamma }\right\} _{\gamma \in \Omega}$, have been considered in \cite{thurley2018modeling}. 

It is known that linear renewal equations modelling the evolution in time of a structured population can be reformulated as a partial differential equation with a transport term and with birth-death terms. 
See for instance \cite{calsina2016structured}. 
Similarly, this is possible for the RFEs \eqref{eq:IntroConcentrationOneEntrance},  \eqref{eq:IntroRE one entrance}, \eqref{eq:IntroConcentrationOneOutgoing}. The corresponding PDEs contain a transport term and birth-death terms. We refer to these types of models as structured population equations (SPEs). They have the form
\begin{align} \label{eq:IntrodModelPDE} 
	\begin{split}
	\partial _{t}f_{\alpha }(t,x)+\partial _{x}f_{\alpha }(t,x) & = -\Lambda_{\alpha }(x)f_{\alpha }(t,x),\ \ x\geq 0,\ \ t\geq 0,\ \ \alpha \in X
	\\
	f_{\alpha }\left( t,0\right)  &=\sum_{\beta \in X \setminus \{ \alpha \} } \int_{0}^{\infty }\lambda _{\beta \alpha}(x) f_{\beta }(t,x)dx
	\\
	f_{\alpha }(0,x) &=f_{\alpha ,0}\left( x\right)
	\end{split}
\end{align}
where $\Lambda _{\alpha }(x):=\sum_{X\setminus \{\alpha \} } \lambda_{\alpha \beta }(x) \geq 0.$

In the context of this paper, the family of solutions  $ \{ f_\alpha(t,x) \} $ of the equations \eqref{eq:IntrodModelPDE} is the density of elements with state in the compartment $\alpha \in X$ and with age $x$ at time $t$.
The particular type of SPEs considered here is conservative, i.e. $\partial
_{t}\left( \sum_{\alpha \in \Omega}\int_{0}^{\infty }f_{\alpha }(t,x)dx\right) =0
$. Notice that in these equations we assume that the elements of the population with trait in $ \alpha $ and age $x$ are removed with rate $\Lambda _{\alpha }(x)$ and they re-appear in the population as elements with trait in $\beta \neq \alpha $ and age $x=0$ at rate $\lambda_{\alpha\beta}$.

Usually in classical structured population models the parameter $x$ could be the age, the size or the immunity level against a certain pathogen, see \cite{diekmann1998formulation,franco2021one}. 
In this model $x$ is the time for which an element has been in a certain compartment. 
Introducing the age structure allows to reduce the non-Markovian system of RFEs, with forcing function $B_\alpha^0$ satisfying a suitable consistency condition (cf. \ref{PDE RE:forcing functions}), to a Markovian system of PDEs. Here with Markovian system of PDEs we mean that the transition from a certain state to another one does not depend on the history of the function $\{ f_\alpha\} $.

Let us remark that SPEs of the form \eqref{eq:IntrodModelPDE} have been extensively used in Mathematical Biology, in particular in population dynamics, see for instance \cite{perthame2006transport}. Solutions to SPEs describe the evolution of individuals (e.g. cells, humans, animals, etc.) in a population structured via a certain variable (e.g. age, size, immunity against a certain pathogen). 
In this paper, we study the equivalence of a generalization of the SPE \eqref{eq:IntrodModelPDE} with a generalization of the RFE \eqref{eq:IntroConcentrationOneEntrance}, \eqref{eq:IntroRE one entrance}, \eqref{eq:IntroConcentrationOneOutgoing}. 

Chemical systems in Systems Biology are often formulated as ODEs. 
Here, we denote by $\Omega $ the finite set of possible states of the elements in the system. The elements are for instance cells in different states or, alternatively, chemical substances which can be transformed in another one by means of chemical reactions.  

The system is described by a finite number of
concentrations $n(t)=(n_{i}(t))_{i\in \Omega }\in \mathbb{R}_{+}^{|\Omega |}$. Here, $|\Omega |\in 
\mathbb{N}$ denotes the total number of states. The
concentrations evolve according to ODEs of the form 
\begin{equation}
	\dfrac{dn}{dt}(t)=An(t),\ n\left( 0\right) =n_{0},  \label{eq:IntrodModelODE}
\end{equation}
where the matrix $A\in \mathbb{R}^{|\Omega |\times |\Omega |}$ satisfies 
\begin{equation*}
	A_{ii}=-\sum_{k\in \Omega \backslash \{i\}}A_{ki}\ \ \text{for all }i\in \Omega \ \ ,\ \ A_{ki}\geq 0\ \text{for }k\neq i.
\end{equation*}
As we will see in this paper, one can reduce a system of ODEs \eqref{eq:IntrodModelODE} to the RFEs \eqref{eq:IntroConcentrationOneEntrance},  \eqref{eq:IntroRE one entrance}, \eqref{eq:IntroConcentrationOneOutgoing}. To this end, we partition the set of states $\Omega $ into compartments, i.e. in a
family of disjoint sets $ \alpha\in X$. Then, our study concerns merely the concentrations $ N_{\alpha }= \sum_{k\in\alpha }n_{k}$ within each compartment $ \alpha\in X $ as well as the inward and outward fluxes. Let us mention that for general choices of matrices $ A $ these fluxes solve more general RFEs than those in \eqref{eq:IntroConcentrationOneEntrance},  \eqref{eq:IntroRE one entrance}, \eqref{eq:IntroConcentrationOneOutgoing}. 

It is important to remark that, while the evolution of the solution $n\in \mathbb{R}_{+}^{|\Omega |}$ of \eqref{eq:IntrodModelODE} is Markovian,  the evolution of $ \{N_{\alpha }\}_{\alpha \in X}$ is non-Markovian. Therefore, we say that reformulating the ODEs \eqref{eq:IntrodModelODE} as a RFE is a \textit{demarkovianization} process. 
Notice that the ODEs \eqref{eq:IntrodModelODE} are in general not equivalent to the RFEs \eqref{eq:IntroConcentrationOneEntrance},  \eqref{eq:IntroRE one entrance}, \eqref{eq:IntroConcentrationOneOutgoing}, unless some information on the internal states of each compartment is available.
However, the information on the evolution of internal states before time $ 0 $ is contained in the functions $\{D_\alpha^0\} $ and $\{ B_\alpha^0\}$. Similarly, the response functions $ \Phi_{ \alpha \beta }(t) $ are given by the evolution of internal states at time $ t $.

Once a RFE has been derived from a ODEs system, we can think of each compartment as a \textit{black box}. The RFE model describes interactions between these black boxes. On the other hand, the concentrations $n_{k}$ can be interpreted as a set of internal variables, which characterize each compartment completely. The demarkovianization procedure yields a system with a smaller number of variables. Thus, one replaces a large Markovian system by a smaller non-Markovian system.

This procedure is reminiscent of the construction of so-called hidden Markov models (cf. \cite{baum1966statistical}). The goal in that case is to study the evolution of a Markov-process X, with unobservable (hidden) variables, by analysing only the evolution of the observable variables.
We refer to \cite{bishop1986maximum} where hidden Markov processes have been applied to analyse DNA sequences for the first time. 

It is relevant to mention that the demarkovianization process in this paper is different from a procedure called lumpling, which has been extensively studied in chemical engineering, see \cite{atay2017lumpability}. In lumpling a large system of ODEs is reduced to a smaller system of ODEs, i.e. a Markovian process. This is only possible if the initial system of ODEs has a particular structure. In the procedure studied here the systems obtained are in general non-Markovian.

Furthermore, let us mention that ODEs of the from \eqref{eq:IntrodModelODE} describe pure jump Markov processes. Similarly, RFEs of the form \eqref{eq:IntroRE one entrance}  describe the so-called semi-Markov processes, see \cite{gyllenberg2008quasi}. These semi-Markov processes are, as well, pure jump processes. However, while in the case of Markov processes jump times are always exponentially distributed, semi-Markov processes allow for more general distributions of the jump times. In fact, the response functions $ \Phi_{ \alpha \beta } $ are exactly the probability densities of these jump time distributions.

Let us now give an overview of the problems studied in this paper using the response function formalism.
First of all, we study the relation between the ODEs and the RFEs in the linear case. We formulate in a precise
manner the response function $\Phi_{\alpha \beta} $ corresponding to the decomposition in compartments of the systems of  ODEs \eqref{eq:IntrodModelODE}. 
In addition, we prove that for any set of response functions $\Phi_{\alpha \beta }$ satisfying \eqref{eq:IntodKernelConservation} it is possible to find a sequence of ODEs with the form \eqref{eq:IntrodModelODE}, such that the corresponding response functions converge to $\Phi_{\alpha \beta }$. In other words, we prove that the set of response functions corresponding to the class of ODEs \eqref{eq:IntrodModelODE} is dense in the set of probability measures (endowed with the weak topology).
We refer to Section \ref{sec:ApproximationIntegralKernel} for more details. 

In this paper we consider also systems of the form \eqref{eq:IntrodModelODE} satisfying the detailed balance condition, i.e. we assume that at the equilibrium each individual reaction is balanced (cf. \cite{liggett1985interacting}). 
It turns out that the class of response functions that can be derived from these systems is much smaller than the one obtained for the general systems with the form \eqref{eq:IntrodModelODE}. 
More precisely, the response functions $\Phi_{\alpha \beta }$ obtained from a general class of ODEs satisfying the detailed balance condition belong to the family of completely monotone functions, i.e. they are Laplace transforms of non-negative measures 
(see Theorem \ref{thm:DetailedBalanceKernel}). 
We remark that, to have completely monotone response functions is a necessary condition for systems originating from ODEs with detailed balance, but it is not sufficient. There are also systems of ODEs, that do not satisfy the detailed balance condition, for which the response function is not completely monotone. Theorem \ref{thm:DetailedBalanceKernel} provides a way to discriminate these systems from the ones satisfying the detailed balance condition. 

As far as we know, this is the first property that has been derived for response functions that at the microscopic level are described by reactions satisfying the detailed balance property.
It is relevant to mention that the characterization of biochemical systems for which the detailed balance condition holds or fails is an active research area (see for instance \cite{battle2016broken, li2019quantifying, martinez2019inferring}). 
For instance in \cite{martinez2019inferring} a measure of the degree of irreversibility of general semi-Markov processes has been obtained. However, notice that the question of determining if the semi-Markov process under consideration has been obtained from a chemical system satisfying the detailed balance condition, is not addressed in \cite{martinez2019inferring}.

Recall that the demarkovianization procedure yields a system of RFEs, which are in general non-Markovian. It is then relevant to classify the response functions for which the corresponding RFEs are actually Markovian. In fact, we prove, for a specific class of forcing function, that such response functions are exactly given by exponentials. A similar result is known for REs arising in population dynamics and in epidemiology, see \cite{diekmann2020finite,diekmann2023systematic}. Furthermore, in terms of semi-Markov processes, this result is related to the fact that the only jump time distribution which yields a Markov processes is the exponential distribution.

Next, we study the long-time behaviour of solutions to \eqref{eq:IntroConcentrationOneEntrance},  \eqref{eq:IntroRE one entrance}, \eqref{eq:IntroConcentrationOneOutgoing}. Specifically, we give conditions on the response functions that guarantee that the solution $ \{N_\alpha\} $ to \eqref{eq:IntroConcentrationOneEntrance}, \eqref{eq:IntroRE one entrance},\eqref{eq:IntroConcentrationOneOutgoing} converge to a unique stable distribution. For this we rely on Laplace transform methods similar to the ones in \cite{diekmann2012delay}.

Another issue that we discuss in this paper is the reformulation of RFEs of the form \eqref{eq:IntroConcentrationOneEntrance},  \eqref{eq:IntroRE one entrance}, \eqref{eq:IntroConcentrationOneOutgoing} in terms of SPEs of the form \eqref{eq:IntrodModelPDE}.  
This reformulation relies on the introduction of a canonical age structure. We stress that the age structure will not be an intrinsic property of the elements of the system, but a set of auxiliary variables that allows to \textit{Markovianize} the evolution of the densities of elements in the compartments. 

In order to illustrate the use of the formalism of RFEs developed in this paper, we provide several examples of linear models, which are well-established in Biochemistry and in Systems Biology. In particular, we study the evolution in time of two variations of  the classical Hopfield model \cite{Hopfield} of kinetic proofreading using the machinery of RFEs. Furthermore, we exhibit a model of polymerization combined with proofreading (see \cite{PigolottiSartori}) that results in a non-Markovian polymerization rate. Then, we also formulate in terms of RFEs a linear version of the classical Barkai-Leibler model for robust adaptation \cite{barkai1997robustness}. 

Finally, we consider examples of non-linear models. In this situation the class of RFEs must be much more general than the one in \eqref{A1}.
Specifically, it would be relevant to determine under which conditions a class of RFEs is equivalent to non-linear chemical reactions with non-linearities of the kind of the mass action or more complicated ones like Michaelis-Menten or Hill's law.
In particular, the response function contains in general, multiple integrals of the form 
\begin{align} \label{nonlinearityintroduction}
	\int_{-\infty }^t \dots \int_{-\infty}^t I(s_1) \dots   I(s_n)  \psi(t-s_{1}, \dots t-s_{n} )  ds_{1} \dots  ds_n.
\end{align}
Notice that operators with the form \eqref{nonlinearityintroduction} can provide information on the time correlations of the signal $I(s)$. In contrast, such correlations can not be described by the integral operators in \eqref{A1}. This is not surprising since linear ODE systems cannot yield information about time correlations of the incoming signal, in contrast with non-linear systems.

The non-linear examples discussed here include a model of non-Markovian polymerization as well as a system of ODEs describing the standard Coherent Type $1$ Feed Forward Loop. The latter has been extensively considered in Systems Biology (cf.  \cite{alon2019introduction}).

Some of the results of this paper have been derived in different contexts in the literature, as for instance the characterization of response functions associated Markovian dynamics or the density of the response functions associated to ODEs systems in the space of probability measures.
However, in order to unify the notation and to present the results in the context of chemical reactions considered in this paper, we decided to include the proofs of these results here. 

\subsection{Notation and plan of the paper}
\label{sec:notation}
Before explaining the plan of this paper let us collect here some notation used later on. 
First of all we define $\mathbb R_+:= [0, \infty)$ and $\mathbb R_*:= (0, \infty)$. We denote by $C_c(\mathbb R_+)$ the space of continuous functions with compact support endowed with the supremum norm. 
Moreover we write $\mathcal M (\mathbb R_+) $ for
the space of Radon measures on $\mathbb R_+$. We denote with $\mathcal M_{+} (\mathbb R_+)$ the cone of non-negative Radon measures and with $\mathcal M_{ b} (\mathbb R_+)$ the space of bounded Radon measures. Furthermore, we write 
$\mathcal M_{ +, b} (\mathbb R_+)$ to indicate the cone of non-negative bounded Radon measures. Let us recall that endowing $\mathcal M_{b}(\mathbb R_+)$ with the total variation norm $\|\cdot \|_{TV}$ yields a Banach space which can be identified with $C_0(\mathbb R_+)^*$. Here, $ C_0(\mathbb R_+) $ denotes the space of continuous functions vanishing at infinity endowed with the supremum norm. 

Furthermore, let us recall that the weak convergence in the sense of measures is defined by duality with bounded continuous functions $ C_b(\R_+) $, i.e. $ \mu_n\rightharpoonup \mu $ if and only if
\begin{align*}
	\int_{\R_+}\phi(x)\, \mu_n(dx) \to \int_{\R_+}\phi(x)\, \mu(dx)
\end{align*}
as $ n\to \infty $ for any $ \phi\in C_b(\R_+)  $. Let us denote by $d_w $ a metric inducing weak convergence, e.g. the L\'{e}vy-Prokhorov metric.

The space $C([0, T] ; \mathcal M_{+, b}(\mathbb R_+) ) $ contains all continuous functions from $[0, T] $ to $\mathcal M_{+,b}(\mathbb R_+)$. Here, we endow the space $\mathcal M_{+, b} (\mathbb R_+)$ with the Wasserstein distance $W_1 $ defined by 
\[ 
W_1(\mu, \nu ):= \sup_{\{ \| \varphi \|_{Lip} \leq 1 \}} \int_{\mathbb R_+} \varphi(x) (\mu - \nu )(dx) 
\]
where the supremum is taken over the Lipschitz functions and where
\begin{align*}
    \| \varphi \|_{Lip }= \| \varphi \|_\infty + \sup_{ \{  x,y \in \mathbb R_+,  x \neq y \} }  \frac{|\varphi(x) - \varphi (y) |}{|x-y |}.
\end{align*}
Furthermore, we denote with $L^1_{loc}(\mathbb R , \mathbb R^{n \times m })$ the space of measurable functions from $\mathbb R$ to $\mathbb R^{n \times m }$ that are locally integrable and with $W^{1,1}_{loc}(\mathbb R_+, \mathbb R) $ the space of locally absolutely continuous functions from $\mathbb R_+$ to $\mathbb R$. Let us also define by $L^1(\mathbb R_+, e^{- z_0 t } dt )$ the space of measurable functions $f$ from $\mathbb R_+$ to $\mathbb R$ such that $t \mapsto f(t) e^{ z_0 t } $ is integrable. 

In addition, we write $\hat{f} (z) $ for the Laplace transform of $f$, i.e.
\[
\hat{f} (\lambda):= \int_0^\infty e^{- \lambda t } f(t) dt. 
\]

Finally, for $A\in \R^{n\times m}$ we write $\|A \|$ to indicate the matrix norm induced by the standard euclidean norm on $\R^{n}$ and $\R^{m}$. As no ambiguity should arise we abuse notation and write $\|\cdot \|$ for the standard euclidean norm for vectors.
\bigskip 

The paper is organized as follows.  
In Section \ref{sec:ReformODEasRFE} we discuss the reformulation of a linear systems of ODEs \eqref{eq:IntrodModelODE} in a system of RFEs.  In Section \ref{sec:ApproximationIntegralKernel} we characterize the response functions corresponding to general linear systems of ODEs as well as systems with detailed balance. Section \ref{subsec:CharMarkov} is devoted to the characterization of response functions $\Phi_{\alpha \beta}$ in \eqref{eq:IntroConcentrationOneEntrance},  \eqref{eq:IntroRE one entrance}, \eqref{eq:IntroConcentrationOneOutgoing} corresponding to a Markovian evolution. In particular, we prove that such response functions are exactly exponentials. Then, we analyze the long-time behaviour of solutions to \eqref{eq:IntroConcentrationOneEntrance}, \eqref{eq:IntroRE one entrance},\eqref{eq:IntroConcentrationOneOutgoing} in Section \ref{sec:long-time}. Furthermore, the equivalence of RFEs and SPEs is discussed in Section \ref{sec:PDE}. In Section \ref{sec:Examples} and \ref{sec:nonlinear} we present both linear and non-linear examples of biochemical systems whose response can be described using RFEs. Finally, we include some concluding remarks in Section \ref{sec:conclusion}.

\section{Reformulation of linear ODEs in terms of RFEs}\label{sec:ReformODEasRFE}

  In this section we study how to rewrite systems of ODEs of the form \eqref{eq:IntrodModelODE} using the RFEs formalism. 
  More precisely, we will show that the concentrations of elements with different states in the compartment $\alpha $, $\{ n_\alpha \} $ with $n_\alpha \in \mathbb R_+^{|\alpha |}$ satisfy a system of ODEs (cf. \eqref{eq:PrelDecompODE}). 
  Then we define the concentration $ N_\alpha$ of individuals with states in compartment $ \alpha\in X $ by
\begin{align} \label{eq:DecompDefNumberComp} 
	N_\alpha(t):=\sum_{j \in \alpha} (n_\alpha (t) )_j= \textbf{e}^\top_{|\alpha|} n_\alpha(t).
\end{align} 
Here, we use the notation

 \begin{equation} \label{e bar} 
 \textbf{e}_n =(1,\ldots,1)^\top\in\R^{n}.
 \end{equation} 
 for $ \textbf{e}_\alpha $. 
 As we will see, the evolution of $ \{N_\alpha\} $ is given by a generalization of the RFEs described in the introduction, cf. \eqref{eq:IntroConcentrationOneEntrance}-\eqref{eq:IntroRE one entrance}-\eqref{eq:IntroConcentrationOneOutgoing}, which are valid only when the compartments have at most one entrance point.
In Section \ref{sec:DecompGeneralSyst} we introduce the decomposition of the system \eqref{eq:IntrodModelODE} into compartments. 
In Section \ref{subsec:generlized RFE} we formulate and study the generalized RFEs.
In Section \ref{sec:ODE and RFE}, instead, we explain the relation  between the system of ODEs \eqref{eq:PrelDecompODE} and the RFEs.

\subsection{Decomposition into compartments}
\label{sec:DecompGeneralSyst}
In this section we introduce the decomposition of the system \eqref{eq:IntrodModelODE} into compartments. We also give a definition of a special class of decompositions into compartments that have at most one entrance point. 

Recall that the set of the states, $\Omega$, is finite. The dynamics of concentrations $ n=(n_i)_{i\in \Omega}\in \R_+^{|\Omega|} $ are given by
\begin{align}\label{eq:DecompModelODE}
	\dfrac{dn(t) }{dt}= An(t), \quad t >0.
\end{align}
We assume the matrix $ A\in \R^{|\Omega|\times |\Omega|} $ to be of the form
\begin{align*}
		A_{ij} = \lambda_{ji}\geq0 \text{ for } i,\, j\in \Omega, \, i\neq j, \quad A_{ii} = - \sum_{k\in \Omega\backslash\{i\}} \lambda_{ik}.
\end{align*}
Hence, $\textbf{e}_{|\Omega |}^\top A =0$ where we are using the notation \eqref{e bar}. 

Let us note that the system of ODEs \eqref{eq:DecompModelODE} is related to a pure Markov jump process with jump rates $ \lambda_{ij}\geq0 $. Furthermore, these rates induce a graph structure on the state space $\Omega$. More precisely, we can define the set of the (directed) edges $ \mathcal{E}\subset \Omega^2 $ by $ \mathcal{E}=\setdef{(i,j)\in\Omega^2 }{\lambda_{ij}>0} $.
Then $(\Omega , \mathcal E)$ is a directed graph. See Figure \ref{fig:compartments} for a graphical representation of the division into compartments. 
	\begin{figure}[ht]
		\centering
		\includegraphics[width=0.4\linewidth]{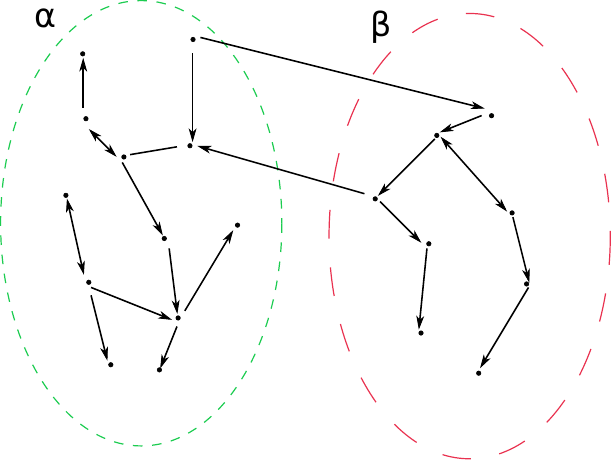}
		\caption{Partition of a graph in two compartments $\alpha $ and $\beta $. The straight lines connecting the elements are Markovian reactions. }
		\label{fig:compartments}
	\end{figure}
 
 For our reduction in compartments we choose a partition $ X\subset 2^\Omega $. Accordingly, the solution of \eqref{eq:DecompModelODE} can be decomposed in $ n(t)=(n_\alpha(t))_{\alpha\in X } $ where  $ n_\alpha(t)\in \R_+^{|\alpha|} $ is the evolution in time of the density of elements in the compartment $\alpha \in X$. In order to specify the equations solved by $ \{n_\alpha \}_{ \alpha \in X} $, we decompose the matrix $A$ as 
\[ A = \left(\begin{matrix}
			A_{\alpha\beta}
		\end{matrix}\right)_{\alpha,\beta\in X}.
  \] 
	Here, the matrices $ A_{\alpha\beta}\in \R^{|\alpha|  \times  |\beta|  } $ are defined by
	$A_{\alpha\beta} = (\lambda_{ji})_{i\in\alpha,j\in\beta}$
	for every $\alpha , \beta \in X$ such that $\alpha\neq \beta$. 
 Instead, when $\alpha=\beta$ we define
	\begin{align*}
		A_{\alpha \alpha } = E_{\alpha \alpha}  - C_{\alpha}. 
	\end{align*}
	where the matrices $E_{\alpha \alpha} \in \mathbb R_+^{|\alpha | \times |\alpha |}$ are given by 
	$
		(E_{\alpha \alpha})_{ij} = A_{ij}=\lambda_{ji}
	$
	when $i,j\in \alpha$, $i\neq j$. While for $ i=j $ we have
	$
		(E_{\alpha \alpha})_{ii} = -\sum_{k\in \alpha}\lambda_{ik}.
	$
	The matrix $C_{\alpha}  \in \mathbb R_+^{|\alpha | \times |\alpha |} $ contains the loss terms due to the jumps from $\alpha $ to other compartments.
 In particular, $C_\alpha$ is a diagonal matrix of the form
	$
		(C_{\alpha})_{ii} = \sum_{j\in \Omega \setminus \{\alpha\}} \lambda_{ij} \text{ where } i \in \alpha.$
  
	As a consequence of \eqref{eq:DecompModelODE} we have for all $n_\alpha $
	\begin{align}\label{eq:PrelDecompODE}
		\frac{d n_\alpha}{dt}(t)= E_{\alpha \alpha}n_{\alpha } (t) - C_\alpha n_\alpha (t)+ \sum_{\beta \in X \setminus \{\alpha\}} A_{\alpha \beta } n_\beta (t).
	\end{align}
	Accordingly, the initial condition is $n_\alpha(0)=n_\alpha^0$ for a given $n_\alpha^0  \in \mathbb R_+^{|\alpha |}$. Finally, the graph structure $(\Omega, \mathcal{E}) $ suggests the following definition.
\begin{definition}[Entrance point]\label{def:EntrancePointGraph}
	Consider a directed graph $ (\Omega,\mathcal{E}) $ and a partition $ X\subset 2^{\Omega} $. Let $ \alpha\in X $. A state $ i\in\alpha $ is an entrance point if there exists $ \beta \in X\setminus\{\alpha\} $ and $ j\in \beta $ such that $ (j,i)\in \mathcal{E} $.
\end{definition}
As mentioned in the introduction we sometimes restrict our attention to decompositions into compartments that have at most one entrance point. This single entrance point (if it exists) is then denoted by $ i_\alpha\in \alpha $.

\subsection{Generalized RFEs} \label{subsec:generlized RFE} 
The generalization, mentioned in the introduction, of the RFE that we are interested in  is given by the following system of equations for $ \alpha\in X $
\begin{align}
	\frac{d N_\alpha }{dt }(t) &= \textbf{e}_\alpha^\top S_\alpha(t) - \textbf{e}_\alpha^\top  J_\alpha(t), \quad N_\alpha(0) = N_\alpha^0\geq0, \label{eq:Concentration}
	\\
	S_\alpha(t) &=  S_\alpha^0(t) + \sum_{\beta \in X \setminus \{\alpha\} } \int_0^t  G_{\beta \alpha} (t-s) S_\beta (s)  ds, \label{eq:REInflux}
	\\
	J_\alpha(t) &= J^0_\alpha(t) + \int_0^t K_{\alpha } (t-s) S_\alpha(s) ds. \label{eq:REOutflux}
\end{align}
The fluxes $ \{S_\alpha(t) \} , \{\, J_\alpha(t)\} $ with $S_\alpha, J_\alpha \in \R_+^{|\alpha|} $ as well as the concentrations $\{ N_\alpha \} $ with $N_\alpha \in \R_+^{|\alpha|} $ are the unknowns.
Instead, the kernels $G_{\alpha\beta}(t)  \in \mathbb R_+^{|\beta | \times |\alpha |} $, $K_\alpha (t) \in  \mathbb R_+^{|\alpha | \times |\alpha |} $ and the forcing functions $S_\alpha^0(t) \in \mathbb R_+^{|\alpha | } $, $J^0_\alpha(t) \in \mathbb R_+^{|\alpha | } $ are given data.

Let us mention that \eqref{eq:Concentration}, \eqref{eq:REInflux}, \eqref{eq:REOutflux} is a closed system of equations, so once $ \{S_\alpha\} $ is known, we can deduce $ \{J_\alpha\} $ and $\{ N_\alpha \}$. 
While the influxes and the outfluxes $\{ B_\alpha \}$ and $\{ D_\alpha \} $ in \eqref{eq:IntroConcentrationOneEntrance}-\eqref{eq:IntroRE one entrance}-\eqref{eq:IntroConcentrationOneOutgoing} are real valued functions of time, the fluxes $\{S_\alpha\} $ and $\{ J_\alpha \} $ are now vector-valued. Here, the $i$-th component of $S_\alpha $ can be interpreted as the flux of elements, coming from any other compartment, to the state $ i \in \alpha$. Analogously, the $i$-th component of the vector $J_\alpha $ is the out-flux from the state $i \in \alpha $ to any state $j$ in some compartment $\beta \neq \alpha $. 

Concerning the well-posedness of the system \eqref{eq:Concentration}-\eqref{eq:REInflux}-\eqref{eq:REOutflux} we rely on the following result.

\begin{lemma}\label{lem:Wellposedness RFE}
	Assume that for all $ \alpha, \beta \in X $
	\begin{align}\label{eq:RegularityCondGenRFE}
		G_{\alpha\beta} \in L^1_{\operatorname{loc}}(\R_+; \R_+^{|\beta|\times |\alpha|}), \quad K_\alpha\in L^1_{\operatorname{loc}}(\R_+; \R_+^{|\alpha|\times |\alpha|}), \quad 
		S^0_\alpha,\,  J^0_\alpha \in L^1_{\operatorname{loc}}(\R_+;\R_+^{|\alpha|}). 
	\end{align}
	Then, the system \eqref{eq:Concentration}-\eqref{eq:REInflux}-\eqref{eq:REOutflux} has a unique solution with $ S_\alpha, \, J_\alpha \in L^1_{\operatorname{loc}}(\R_+;\R_+^{|\alpha|}) $ and $ N_\alpha\in W^{1,1}_{\operatorname{loc}}(\R_+;\R) $ for all $ \alpha\in X$.
\end{lemma}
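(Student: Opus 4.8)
The plan is to solve the system \eqref{eq:Concentration}-\eqref{eq:REInflux}-\eqref{eq:REOutflux} sequentially, exploiting the triangular structure noted in the text: the equation \eqref{eq:REInflux} for $\{S_\alpha\}$ is closed and does not involve $\{J_\alpha\}$ or $\{N_\alpha\}$, so one solves it first, then recovers $\{J_\alpha\}$ from \eqref{eq:REOutflux} by a single integration, and finally $\{N_\alpha\}$ from \eqref{eq:Concentration} by integrating in time. Thus the entire difficulty is concentrated in \eqref{eq:REInflux}, which is a linear Volterra integral equation of the second kind for the stacked vector $S := (S_\alpha)_{\alpha \in X}$. I would first rewrite it in the compact form $S = S^0 + G * S$, where $G$ is the (block) matrix-valued kernel with blocks $G_{\beta\alpha}$ (and zero diagonal blocks), $*$ denotes convolution on $\R_+$, and $S^0 = (S_\alpha^0)_{\alpha\in X}$; this casts the problem as a standard linear convolution Volterra equation with $L^1_{\mathrm{loc}}$ kernel and $L^1_{\mathrm{loc}}$ forcing.

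The core step is to establish existence and uniqueness for this Volterra equation. The cleanest route is a contraction/Picard argument on a finite interval $[0,T]$ combined with a continuation argument. I would fix $T>0$, work in the Banach space $L^1([0,T];\R_+^{|\Omega|})$ (or its vector-valued analogue), and consider the affine map $\mathcal{T}(S) = S^0 + G*S$. The convolution operator $S\mapsto G*S$ is bounded on $L^1([0,T])$ with operator norm controlled by $\|G\|_{L^1([0,T])}$ by Young's inequality; iterating the operator yields the standard bound
\begin{align*}
	\| (G*)^{\,n} S \|_{L^1([0,T])} \leq \frac{\bigl(\,\|G\|_{L^1([0,T])}\,\bigr)^{n}}{n!}\, \|S\|_{L^1([0,T])},
\end{align*}
so that some iterate $\mathcal{T}^n$ is a contraction; alternatively one invokes the theory of the resolvent kernel $R$ solving $R = G + G*R$, which exists in $L^1_{\mathrm{loc}}$ for any $L^1_{\mathrm{loc}}$ kernel, and then writes $S = S^0 + R*S^0$ as the explicit unique solution. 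Either way one obtains a unique $S \in L^1([0,T];\R_+^{|\Omega|})$; since $T$ is arbitrary and solutions on nested intervals agree by uniqueness, they glue to a unique $S\in L^1_{\mathrm{loc}}(\R_+)$. Non-negativity of $S$ then follows because $S^0$, $G$ (hence the resolvent $R$, being a convergent sum of iterated convolutions of non-negative kernels) are all non-negative, so $S = S^0 + R*S^0 \geq 0$.

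Once $S$ is known, equation \eqref{eq:REOutflux} defines $J_\alpha = J_\alpha^0 + K_\alpha * S_\alpha$ directly; by Young's inequality the convolution of $K_\alpha \in L^1_{\mathrm{loc}}$ with $S_\alpha \in L^1_{\mathrm{loc}}$ lies in $L^1_{\mathrm{loc}}$, so $J_\alpha \in L^1_{\mathrm{loc}}(\R_+;\R_+^{|\alpha|})$, and it is non-negative as a sum of non-negative terms. Finally, the right-hand side of \eqref{eq:Concentration}, namely $\uv_\alpha^\top S_\alpha - \uv_\alpha^\top J_\alpha$, is an $L^1_{\mathrm{loc}}$ scalar function, so $N_\alpha(t) = N_\alpha^0 + \int_0^t (\uv_\alpha^\top S_\alpha - \uv_\alpha^\top J_\alpha)(r)\,dr$ is the unique solution, and it is absolutely continuous on compact intervals, i.e. $N_\alpha \in W^{1,1}_{\mathrm{loc}}(\R_+;\R)$.

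I expect the main obstacle to be purely organizational rather than deep: one must be careful that the contraction (or resolvent) argument is run on the coupled block system rather than componentwise, since the blocks $G_{\beta\alpha}$ mix different compartments $\beta\neq\alpha$, and one must verify that the $L^1_{\mathrm{loc}}$ regularity propagates correctly under convolution at each of the three stages. A minor point worth checking is that the solution classes are stated with different codomains ($J_\alpha, S_\alpha$ valued in $\R_+^{|\alpha|}$ but $N_\alpha$ merely in $\R$), which is consistent since $N_\alpha$ is a scalar concentration; and one should confirm that uniqueness in $L^1_{\mathrm{loc}}$ for the Volterra equation indeed yields uniqueness of the whole triple, which is immediate from the sequential structure.
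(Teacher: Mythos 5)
Your proposal takes essentially the same route as the paper: exploit the triangular structure, solve the closed Volterra equation for the stacked flux $S$ via the resolvent kernel $R\in L^1_{\operatorname{loc}}$ (the paper cites \cite[Section 2.3]{Gripenberg1990VolterraIntEq} for its existence and uniqueness), obtain non-negativity from the representation of $R$ as a convergent series of iterated convolutions of the non-negative kernel, and then read off $J_\alpha$ and $N_\alpha$ by convolution and integration. That part of your argument is correct and complete.

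However, the bound you cite in your primary (Picard) variant is false at the stated generality: for kernels that are merely $L^1_{\operatorname{loc}}$, Young's inequality gives only $\|G^{*n}\|_{L^1([0,T])}\le \|G\|_{L^1([0,T])}^n$, with no factorial gain. Indeed, for the scalar kernel $g=\varepsilon^{-1}\chi_{[0,\varepsilon]}$ one has $\|g\|_{L^1([0,T])}=1$ while $\|g^{*n}\|_{L^1([0,T])}=1$ for every $n$ with $n\varepsilon\le T$ (the $n$-fold convolution is a probability density supported in $[0,n\varepsilon]$), contradicting the claimed $1/n!$ decay. The factorial bound $|k^{*n}(t)|\le M^n t^{n-1}/(n-1)!$ requires an $L^\infty$ bound on the kernel, which is not assumed here. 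The standard repairs are either to split $G$ into a bounded part plus a part with small $L^1([0,T])$ norm, or to run the contraction in the exponentially weighted norm $\|f\|_\sigma=\int_0^T |f(t)|e^{-\sigma t}\,dt$, which is submultiplicative under convolution and satisfies $\|G\|_\sigma<1$ once $\sigma$ is large; either repair makes your first route work. Since your alternative route through the resolvent is exactly the paper's proof, the proposal as a whole stands.
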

\begin{proof}
We have to prove existence and uniqueness of a solution of the renewal equation \eqref{eq:REInflux} for every $\alpha \in X$. 
	Existence and uniqueness in $ L^1_{\operatorname{loc}} $ of a solution for \eqref{eq:REInflux} follows from the existence of a unique resolvent $ R \in L^1_{\operatorname{loc}} $, see e.g. \cite[Section 2.3]{Gripenberg1990VolterraIntEq} for the definition of resolvent and for the proof of its existence and uniqueness. 
  The resolvent is given by an infinite series of convolutions involving the kernels, hence it is positive when the kernels are positive. This, together with the fact that the forcing functions have non-negative entries, implies that also the solution $ \{S_\alpha\}, \, \{J_\alpha\} $ have non-negative entries. 
\end{proof}
The solutions to \eqref{eq:REInflux}-\eqref{eq:REOutflux} do not yield, in general, non-negative concentrations $ \{N_\alpha\} $. However, as we see in the following lemma, this holds under additional assumptions on the kernels and the forcing functions.

\begin{lemma}\label{lem:PositivityGenRFE}
	Under the assumptions in Lemma \ref{lem:Wellposedness RFE} consider the unique solution of \eqref{eq:Concentration}-\eqref{eq:REInflux}-\eqref{eq:REOutflux}. Assume in addition that for all $ \alpha \in X $, $ j\in \alpha $
	\begin{align}\label{eq:LemmaPositivityAssumption}
		\int_0^\infty \textbf{e}_\alpha^\top J_\alpha^0(r) \, dr \leq N_\alpha^0, \quad \int_0^\infty 	[\textbf{e}_\alpha^\top K_\alpha(r)]_j\, dr \leq 1.
	\end{align}
	Then, we have $ N_\alpha(t)\geq 0 $ for all $ t\geq0 $, $ \alpha\in X $.
\end{lemma}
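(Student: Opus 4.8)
The plan is to integrate the concentration equation \eqref{eq:Concentration} and to show that the accumulated out-flux can never exceed the initial amount plus the accumulated in-flux. Integrating \eqref{eq:Concentration} from $0$ to $t$ gives
\[
N_\alpha(t) = N_\alpha^0 + \int_0^t \textbf{e}_\alpha^\top S_\alpha(\sigma)\, d\sigma - \int_0^t \textbf{e}_\alpha^\top J_\alpha(\sigma)\, d\sigma,
\]
so it suffices to bound $\int_0^t \textbf{e}_\alpha^\top J_\alpha(\sigma)\, d\sigma$ from above by $N_\alpha^0 + \int_0^t \textbf{e}_\alpha^\top S_\alpha(\sigma)\, d\sigma$. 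Throughout I would use the fact, established in Lemma \ref{lem:Wellposedness RFE}, that the unique solution satisfies $S_\alpha, J_\alpha \geq 0$ componentwise; this is what renders all the integrals below non-negative and licenses the application of Tonelli's theorem.

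First I would substitute the out-flux equation \eqref{eq:REOutflux} into $\int_0^t \textbf{e}_\alpha^\top J_\alpha$. The forcing part contributes $\int_0^t \textbf{e}_\alpha^\top J_\alpha^0(\sigma)\, d\sigma$, which is at most $\int_0^\infty \textbf{e}_\alpha^\top J_\alpha^0(\sigma)\, d\sigma \leq N_\alpha^0$ by the first assumption in \eqref{eq:LemmaPositivityAssumption} and the non-negativity of $J_\alpha^0$. The convolution part is a double integral of a non-negative integrand, so by Tonelli's theorem I may exchange the order of integration and, after the change of variables $\tau = \sigma - s$, rewrite it as
\[
\int_0^t \sum_{j\in\alpha} \left( \int_0^{t-s} [\textbf{e}_\alpha^\top K_\alpha(\tau)]_j\, d\tau \right) (S_\alpha(s))_j\, ds.
\]
Here the bookkeeping with the row vector $\textbf{e}_\alpha^\top K_\alpha$ must be carried out componentwise: for each $j\in\alpha$ the inner integral is bounded by $\int_0^\infty [\textbf{e}_\alpha^\top K_\alpha(\tau)]_j\, d\tau \leq 1$, by the second assumption in \eqref{eq:LemmaPositivityAssumption} together with $K_\alpha\geq 0$. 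Pairing this with $(S_\alpha(s))_j\geq 0$ and summing over $j$ bounds the whole convolution part by $\int_0^t \textbf{e}_\alpha^\top S_\alpha(s)\, ds$.

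Combining the two bounds yields $\int_0^t \textbf{e}_\alpha^\top J_\alpha(\sigma)\, d\sigma \leq N_\alpha^0 + \int_0^t \textbf{e}_\alpha^\top S_\alpha(\sigma)\, d\sigma$. Substituting this into the integrated form of \eqref{eq:Concentration}, the two $S_\alpha$-integrals cancel and so do the two copies of $N_\alpha^0$, leaving $N_\alpha(t)\geq 0$, as claimed. I do not expect a genuine obstacle here: the argument reduces to a one-line cancellation once the out-flux is controlled, and the only points requiring care are the justification of the Tonelli exchange (guaranteed by the sign of the solution from Lemma \ref{lem:Wellposedness RFE}) and the componentwise estimate on $\textbf{e}_\alpha^\top K_\alpha$ — rather than a cruder operator-norm bound — which is precisely the form in which the hypothesis \eqref{eq:LemmaPositivityAssumption} is stated. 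Note also that each compartment may be treated independently, since, as remarked after \eqref{eq:REOutflux}, the system is closed in $\{S_\alpha\}$, so no simultaneous induction over $X$ is required.
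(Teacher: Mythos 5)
Your proposal is correct and follows essentially the same route as the paper's proof: integrate \eqref{eq:Concentration}, substitute \eqref{eq:REOutflux}, absorb the forcing term $\int_0^t \textbf{e}_\alpha^\top J_\alpha^0$ into $N_\alpha^0$ via the first inequality in \eqref{eq:LemmaPositivityAssumption}, exchange the order of integration in the convolution term, and conclude with the componentwise bound on $\textbf{e}_\alpha^\top K_\alpha$ paired with the non-negativity of $S_\alpha$. The only cosmetic difference is that the paper keeps everything in a single chain of inequalities (writing the exchanged integral as $\textbf{e}_\alpha^\top ( I - \int_s^t K_\alpha(r-s)\, dr ) S_\alpha(s)$) rather than first isolating a bound on $\int_0^t \textbf{e}_\alpha^\top J_\alpha$.
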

\begin{remark}
	Let us mention that condition \eqref{eq:LemmaPositivityAssumption} appears very naturally. The first inequality in \eqref{eq:LemmaPositivityAssumption} ensures that the total number of elements, that where in $\alpha$ already at time zero, removed from compartment $ \alpha $ does not exceed $ N^0_\alpha $. The second inequality can be interpreted by viewing \eqref{eq:Concentration}-\eqref{eq:REInflux}-\eqref{eq:REOutflux} as a semi-Markov process. The integral kernels are related to the jump probabilities. Hence, \eqref{eq:LemmaPositivityAssumption} ensures that the probability to leave $ \alpha $ via state $ j\in \alpha $ is bounded by one.
\end{remark}
\begin{proof}[Proof of Lemma \ref{lem:PositivityGenRFE}]
	According to \eqref{eq:Concentration}, we write
	\begin{align*}
		N_\alpha(t) &= N_\alpha(0) + \int_0^t \left( \textbf{e}_\alpha^\top S_\alpha(s) - \textbf{e}_\alpha^\top  J_\alpha(s) \right) \, ds 
		\\
		&=  N_\alpha(0) + \int_0^t \left( \textbf{e}_\alpha^\top S_\alpha(s) - \textbf{e}_\alpha^\top  J_\alpha^0(s) - \int_0^s \textbf{e}_\alpha^\top K_{\alpha } (s-r) S_\alpha(r)\, dr \right) \, ds.
	\end{align*}
	Here, we used \eqref{eq:REOutflux}. We then obtain from \eqref{eq:LemmaPositivityAssumption}
	\begin{align*}
		N_\alpha(t) &\geq \int_0^t \left( \textbf{e}_\alpha^\top S_\alpha(s) - \int_0^s \textbf{e}_\alpha^\top K_{\alpha } (s-r) S_\alpha(r)\, dr \right) \, ds
		\\
		&=\int_0^t \textbf{e}_\alpha^\top\left( I_{|\alpha|\times |\alpha|} -  \int_s^t K_\alpha(r-s)\, dr \right)S_\alpha(s) \, ds \geq 0,
	\end{align*}
	since $ (S_\alpha)_j\geq0 $ for all $ \alpha\in X $, $ j\in \alpha $. This concludes the proof.
\end{proof}
We now provide some conditions ensuring the conservation of mass.
\begin{lemma}\label{lem:ConservationMassGenRFE}
	Under the assumption in Lemma \ref{lem:Wellposedness RFE} consider the unique solution to \eqref{eq:Concentration}-\eqref{eq:REInflux}-\eqref{eq:REOutflux}. Assume that
	\begin{align}\label{eq:LemmaConservMassCondition}
		\sum_{\alpha \in X} \textbf{e}_\alpha^\top\left( S_\alpha^0(t)-J^0_\alpha(t) \right) = 0,
		\quad
		\sum_{\beta\in X\setminus\{\alpha\}} \textbf{e}_\beta^\top G_{\alpha\beta}(t)-\textbf{e}_\alpha^\top K_\alpha(t) =0
	\end{align}
	for all $\alpha\in X$. Then, the total mass is conserved $ \sum_{\alpha\in X}N_\alpha(t)=\sum_{\alpha\in X}N_\alpha^0 $.
\end{lemma}
\begin{proof}
	We obtain from \eqref{eq:Concentration}-\eqref{eq:REInflux}-\eqref{eq:REOutflux}
	\begin{align*}
		\dfrac{d}{dt}\sum_{\alpha \in X} N_\alpha(t) &=  \sum_{\alpha \in X} \textbf{e}_\alpha^\top\left( S_\alpha^0(t)-J^0_\alpha(t) \right) 
		\\
		&\quad + \sum_{\alpha,\, \beta\in X, \, \alpha\neq \beta} \int_0^t \textbf{e}_\alpha^\top G_{\beta\alpha}(t-s) S_\beta(s)\, ds - \sum_{\alpha \in X} \int_0^t \textbf{e}_\alpha^\top K_\alpha(t-s)S_\alpha(s)\, ds
		\\
		&=\sum_{\alpha\in X}\int_0^t S_\alpha(s)\left\lbrace \sum_{\beta\in X\setminus\{\alpha\}} \textbf{e}_\beta^\top G_{\alpha\beta}(t-s) - \textbf{e}_\alpha^\top K_\alpha(t-s) \right\rbrace \, ds =0.
	\end{align*}
\end{proof}

\subsection{Generalized RFEs corresponding to the ODEs model} \label{sec:ODE and RFE}
The main result of this section yields that generalized RFE, cf. \eqref{eq:Concentration}-\eqref{eq:REInflux}-\eqref{eq:REOutflux}, appear as effective equations for $\{ n_\alpha \} $ solving \eqref{eq:PrelDecompODE}.

\begin{theorem}[ODE to RFE]\label{thm:ReductionODEtoRE}
	Consider $n(t) \in \mathbb R_+^{|\Omega|}$ solution to \eqref{eq:DecompModelODE} and $N_\alpha $ defined in \eqref{eq:DecompDefNumberComp}. Then, $\{N_\alpha\} $ satisfies \eqref{eq:Concentration}. The corresponding fluxes $S_\alpha $ and $J_\alpha $ solve the system \eqref{eq:REInflux}-\eqref{eq:REOutflux} with kernels
	\begin{align}
		G_{\beta\alpha} (t) &= A_{\alpha\beta  } e^{t A_{\beta\beta}  }, \quad t \geq 0, \label{eq:ThmIntegralKernelInflux}
		\\
		\left(K_\alpha(t)\right)_{ij} &= \sum_{\beta \in X \setminus \{\alpha\}} \left( \textbf{e}_{\beta}^\top A_{\beta\alpha} \right)_i\left( e^{t A_{\alpha \alpha }}  \right)_{ij}, \quad i,\, j\in \alpha,\,  t \geq 0, \label{eq:ThmIntegralKernelOutflux}
	\end{align}
	and forcing functions
	\begin{align}\label{eq:ThmInOutFlux}
		S_\alpha^0(t) = \sum_{\beta \in X \setminus \{\alpha\}} G_{\beta \alpha} (t) n^0_\beta; \quad 
		J^0_\alpha(t)= K_\alpha (t) n_\alpha^0, \quad t \geq 0.
	\end{align}
	Furthermore, the fluxes are given in terms of $ n(t) $ by
	\begin{align}\label{eq:ReductionODEtoREFluxes}
		S_\alpha(t) = \sum_{\beta \in X  \setminus \{\alpha\} }A_{\alpha \beta}n_\beta(t), \quad (J_\alpha(t))_i = (n_\alpha(t))_i\sum_{\beta \in X  \setminus \{\alpha\} } \left( \textbf{e}_{\beta}^\top A_{\beta\alpha}\right)_i, \quad i\in \alpha
	\end{align}
	for all $ \alpha\in X $ .
\end{theorem}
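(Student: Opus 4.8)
The plan is to regard the block system \eqref{eq:PrelDecompODE} for each compartment as an inhomogeneous linear ODE driven by the influx $S_\alpha$, and then to read off the three renewal relations from the variation-of-constants (Duhamel) formula. The starting point is to rewrite the two fluxes in \eqref{eq:ReductionODEtoREFluxes} in compact matrix form. Since $C_\alpha$ is the diagonal matrix with $(C_\alpha)_{ii}=\sum_{\beta\in X\setminus\{\alpha\}}(\textbf{e}_\beta^\top A_{\beta\alpha})_i$, the componentwise definition of $J_\alpha$ is exactly $J_\alpha(t)=C_\alpha n_\alpha(t)$, while by definition $S_\alpha(t)=\sum_{\beta\neq\alpha}A_{\alpha\beta}n_\beta(t)$. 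With these two identities in hand, the whole proof reduces to bookkeeping with the splitting $A_{\alpha\alpha}=E_{\alpha\alpha}-C_\alpha$.

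First I would verify \eqref{eq:Concentration}. Differentiating $N_\alpha=\textbf{e}_\alpha^\top n_\alpha$ and inserting \eqref{eq:PrelDecompODE} gives $\frac{dN_\alpha}{dt}=\textbf{e}_\alpha^\top(E_{\alpha\alpha}-C_\alpha)n_\alpha+\textbf{e}_\alpha^\top\sum_{\beta\neq\alpha}A_{\alpha\beta}n_\beta$. The key observation is that the columns of $E_{\alpha\alpha}$ sum to zero, i.e. $\textbf{e}_\alpha^\top E_{\alpha\alpha}=0$, which follows at once from the definition of its diagonal and off-diagonal entries (conservation of mass within the compartment). Hence the first term collapses to $-\textbf{e}_\alpha^\top C_\alpha n_\alpha=-\textbf{e}_\alpha^\top J_\alpha$ and the remaining term is $\textbf{e}_\alpha^\top S_\alpha$, which is precisely \eqref{eq:Concentration}.

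The heart of the argument is the representation of $n_\alpha$ obtained by treating $S_\alpha$ as an external source. Writing \eqref{eq:PrelDecompODE} as $\frac{dn_\alpha}{dt}=A_{\alpha\alpha}n_\alpha(t)+S_\alpha(t)$ and applying Duhamel's formula yields
\[
n_\alpha(t)=e^{tA_{\alpha\alpha}}n_\alpha^0+\int_0^t e^{(t-s)A_{\alpha\alpha}}S_\alpha(s)\,ds.
\]
Substituting this representation (with $\beta$ in place of $\alpha$) into $S_\alpha=\sum_{\beta\neq\alpha}A_{\alpha\beta}n_\beta$ and recognising $A_{\alpha\beta}e^{rA_{\beta\beta}}=G_{\beta\alpha}(r)$ from \eqref{eq:ThmIntegralKernelInflux} gives exactly the renewal equation \eqref{eq:REInflux} with forcing $S_\alpha^0(t)=\sum_{\beta\neq\alpha}G_{\beta\alpha}(t)n_\beta^0$. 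Likewise, inserting the representation of $n_\alpha$ into $J_\alpha=C_\alpha n_\alpha$ and using that the componentwise kernel in \eqref{eq:ThmIntegralKernelOutflux} is nothing but $K_\alpha(t)=C_\alpha e^{tA_{\alpha\alpha}}$ produces \eqref{eq:REOutflux} with $J_\alpha^0(t)=K_\alpha(t)n_\alpha^0$.

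I expect no serious analytical obstacle: existence, uniqueness and regularity of the fluxes are already guaranteed by Lemma \ref{lem:Wellposedness RFE}, and uniqueness of the resolvent shows that the explicit fluxes \eqref{eq:ReductionODEtoREFluxes} are the solution of the derived system. The only genuinely delicate points are purely algebraic: the identity $\textbf{e}_\alpha^\top E_{\alpha\alpha}=0$, together with the two translations $J_\alpha=C_\alpha n_\alpha$ and $K_\alpha(t)=C_\alpha e^{tA_{\alpha\alpha}}$, which are what reconcile the componentwise formula \eqref{eq:ThmIntegralKernelOutflux} with the compact matrix form required to apply Duhamel. Once these identifications are verified, all three renewal relations follow by direct substitution.
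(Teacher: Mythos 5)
Your proof is correct: the identifications $J_\alpha = C_\alpha n_\alpha$ and $K_\alpha(t) = C_\alpha e^{tA_{\alpha\alpha}}$ (valid because $C_\alpha$ is diagonal with $(C_\alpha)_{ii} = \sum_{\beta\in X\setminus\{\alpha\}}\left(\textbf{e}_\beta^\top A_{\beta\alpha}\right)_i$), the column-sum identity $\textbf{e}_\alpha^\top E_{\alpha\alpha}=0$, and the Duhamel representation of $n_\alpha$ with $S_\alpha$ as source are all sound, and direct substitution then yields \eqref{eq:Concentration}, \eqref{eq:REInflux} and \eqref{eq:REOutflux} with the stated kernels and forcing functions. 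The paper reaches the same conclusion by a somewhat longer route: it first proves an intermediate result (Lemma \ref{lem:from ODE to RE for compartment}) giving a renewal system for the individual state-to-state fluxes $I_{\alpha\beta}^{ij}(t)=\lambda_{ij}(n_\alpha(t))_i$ with kernels $\Psi_{\alpha\beta}(t;r,i,j)=\lambda_{ij}\left(e^{tA_{\alpha\alpha}}\right)_{ir}$ --- the proof of that lemma is exactly your Duhamel step, written componentwise --- and then derives the theorem by summing these individual-flux equations over the appropriate indices, via \eqref{eq:IndividualFluxesToGeneral}. Your matrix-level argument compresses the two stages into one and is the more economical proof of the theorem as stated; what the paper's detour buys is the finer-grained system \eqref{eq:N_alpha first level}--\eqref{eq:fluxes j first level} for fluxes between individual states, which the authors want on record independently (it is the formulation from the supplementary material of the Thurley et al. reference cited there). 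Mathematically the two proofs rest on the same computation, so no gap remains in yours.
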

In order to prove Theorem \ref{thm:ReductionODEtoRE} we first show that the ODE \eqref{eq:PrelDecompODE} can be reformulated using the response function formalism by a system involving fluxes of the form $I^{i j }_{\alpha \beta }$. This term is the flux from the state $i\in \alpha$ to the state $j\in \beta$. In the proof of Theorem \ref{thm:ReductionODEtoRE} below we will obtain \eqref{eq:Concentration}-\eqref{eq:REInflux}-\eqref{eq:REOutflux} from the system solved by $ \{N_\alpha\} $ and the fluxes $\{I^{i j}_{\alpha \beta}\}$.

Let us first give an informal derivation of the equations solved by $ \{N_\alpha\} $ and the fluxes $\{I^{i j}_{\alpha \beta}\}$. First of all, the change of the concentration of the compartments is just due to the individual fluxes summed appropriately, thus yielding
\begin{equation} \label{eq:N_alpha first level}
	\frac{d N_\alpha (t) }{dt } = \sum_{i \in \alpha} \sum_{\beta \in X \setminus \{\alpha\} }  \sum_{j \in \beta}  I_{\beta \alpha }^{ji} (t) - \sum_{i \in \alpha}\sum_{\beta \in X  \setminus \{\alpha\} } \sum_{j \in \beta } I_{ \alpha \beta }^{ ij } (t).
\end{equation}
Here, the gain term on the right hand side involves the summation over each individual state $ j\in \beta $ for some compartment $ \beta \neq \alpha $ into some state $ i\in \alpha $. The loss term has a similar form. Concerning the flux $ I_{\alpha\beta}^{ij} $ from state $i \in \alpha $ to state $j \in \beta $ there are two contributions.
\begin{enumerate}[(i)]
	\item One contribution is due to elements that are  in $ \alpha $, in some state $ r\in \alpha $, already at time zero. When they evolve in time they reach state $ i\in \alpha $ and jump to state $j \in \beta $ at time $ t $.
	\item The other contribution is due to the elements that jump from some state $ m\in \gamma $, $ \gamma\neq \alpha $ to some state $r \in \alpha $ at time $s \in (0,t)$. Then, they evolve in the compartment $\alpha$ to reach state $i \in \alpha $ and jump to state $ j \in \beta  $ at time $ t $.
\end{enumerate}
The system for the fluxes is therefore given by 
\begin{align} \label{eq:fluxes j first level}
	I_{  \alpha \beta }^{ij} (t)=\overline{I}_{\alpha \beta }^{ij} (t) + \sum_{ r \in \alpha }  \sum_{\gamma \in X \setminus \alpha} \sum_{m\in \gamma}  \int_0^t \Psi_{\alpha \beta }(t-s; r, i, j ) I^{ m r }_{\gamma \alpha } (s) ds
\end{align}
for all $ i\in \alpha $, $ j\in \beta $ and where 
\begin{align}\label{eq:InitialFluxes}
	\overline{I}_{\alpha \beta }^{ij} (t)= \sum_{ r \in \alpha} \Psi_{\alpha \beta}(t; r, i, j)  (n^0_\alpha)_r. 
\end{align}
The above system has again the form of RFEs. 
The precise form of the kernels are given in the following lemma.

Let us mention that equations \eqref{eq:N_alpha first level}, \eqref{eq:fluxes j first level} have been formulated in the supplementary materials of \cite{thurley2011derivation}.

\begin{lemma} \label{lem:from ODE to RE for compartment} 
	Let $n(t) \in \mathbb R_+^{|\Omega|}$ be the solution to \eqref{eq:DecompModelODE} and $\{N_\alpha\}$ defined in \eqref{eq:DecompDefNumberComp}. Then $\{N_\alpha\}$ satisfies \eqref{eq:N_alpha first level} and the corresponding fluxes $ I_{\alpha \beta }^{ij} $ are given in terms of $ n(t) $ by
	\begin{align}\label{eq:LemSolIndividualFluxes}
		I_{\alpha \beta}^{ij}(t) =(A_{\beta\alpha})_{ji} (n_\alpha(t))_i=\lambda_{ij} (n_\alpha(t))_i.
	\end{align} 
    Furthermore, the fluxes $ I_{\alpha \beta }^{ij} $ solve \eqref{eq:fluxes j first level} with kernels
	\begin{align}\label{eq:response function first level}
		\Psi_{\alpha \beta} (t; r , i,j)= (A_{\beta\alpha})_{ji} \left( e^{t A_{\alpha \alpha} } \right)_{ir} = \lambda_{i j }  \left( e^{t A_{\alpha \alpha} } \right)_{ir}
	\end{align}
	for $r,\, i \in \alpha$, $j \in \beta$ and forcing functions $ \overline{I}_{\alpha \beta }^{ij} $ defined by \eqref{eq:InitialFluxes}. 
\end{lemma}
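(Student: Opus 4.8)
The plan is to verify the three assertions in order, each reducing to the explicit ODE \eqref{eq:PrelDecompODE} together with the variation-of-constants formula for the propagator $e^{tA_{\alpha\alpha}}$. Since existence and smoothness of $n$ are guaranteed by the linear system \eqref{eq:DecompModelODE}, no analytic subtlety arises, and the proof is essentially a careful reorganization of Duhamel's formula in compartment-indexed coordinates.

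First I would establish the expression \eqref{eq:LemSolIndividualFluxes}, as it is used in the other two parts. Since $I_{\alpha\beta}^{ij}$ denotes the instantaneous flux of elements jumping from state $i\in\alpha$ to state $j\in\beta$, and the underlying process leaves the occupied state $i$ at rate $\lambda_{ij}$, the flux equals that rate times the occupation $(n_\alpha(t))_i$; recalling $(A_{\beta\alpha})_{ji}=\lambda_{ij}$ gives \eqref{eq:LemSolIndividualFluxes}. To obtain \eqref{eq:N_alpha first level}, I would differentiate $N_\alpha=\textbf{e}_\alpha^\top n_\alpha$, cf. \eqref{eq:DecompDefNumberComp}, and insert \eqref{eq:PrelDecompODE}, yielding $\tfrac{d}{dt}N_\alpha=\textbf{e}_\alpha^\top E_{\alpha\alpha}n_\alpha-\textbf{e}_\alpha^\top C_\alpha n_\alpha+\sum_{\beta\neq\alpha}\textbf{e}_\alpha^\top A_{\alpha\beta}n_\beta$. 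The key observation is that $E_{\alpha\alpha}$ generates the mass-conserving intra-compartment jumps, so each of its columns sums to zero, i.e. $\textbf{e}_\alpha^\top E_{\alpha\alpha}=0$, and the first term drops out. The diagonal matrix $C_\alpha$ contributes $\textbf{e}_\alpha^\top C_\alpha n_\alpha=\sum_{i\in\alpha}\sum_{\beta\neq\alpha}\sum_{j\in\beta}\lambda_{ij}(n_\alpha)_i$, which by \eqref{eq:LemSolIndividualFluxes} is precisely the loss term; likewise $\textbf{e}_\alpha^\top A_{\alpha\beta}n_\beta=\sum_{i\in\alpha}\sum_{j\in\beta}\lambda_{ji}(n_\beta)_j=\sum_{i\in\alpha}\sum_{j\in\beta}I_{\beta\alpha}^{ji}$, which summed over $\beta\neq\alpha$ is the gain term. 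This reproduces \eqref{eq:N_alpha first level}.

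The third assertion is the substantive one. I would read \eqref{eq:PrelDecompODE} as the inhomogeneous linear ODE $\tfrac{d}{dt}n_\alpha=A_{\alpha\alpha}n_\alpha+F_\alpha$, where $A_{\alpha\alpha}=E_{\alpha\alpha}-C_\alpha$ and the source $F_\alpha(t):=\sum_{\gamma\neq\alpha}A_{\alpha\gamma}n_\gamma(t)$ collects the inflow from the other compartments. Duhamel's formula then gives $n_\alpha(t)=e^{tA_{\alpha\alpha}}n_\alpha^0+\int_0^t e^{(t-s)A_{\alpha\alpha}}F_\alpha(s)\,ds$. Taking the $i$-th component, multiplying by $\lambda_{ij}$, and using \eqref{eq:LemSolIndividualFluxes} yields $I_{\alpha\beta}^{ij}(t)=\lambda_{ij}\sum_{r\in\alpha}(e^{tA_{\alpha\alpha}})_{ir}(n_\alpha^0)_r+\lambda_{ij}\int_0^t\sum_{r\in\alpha}(e^{(t-s)A_{\alpha\alpha}})_{ir}(F_\alpha(s))_r\,ds$. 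The first summand is exactly $\overline{I}_{\alpha\beta}^{ij}$ of \eqref{eq:InitialFluxes} with the kernel \eqref{eq:response function first level}. For the integral I would expand the source componentwise, $(F_\alpha(s))_r=\sum_{\gamma\neq\alpha}\sum_{m\in\gamma}\lambda_{mr}(n_\gamma(s))_m=\sum_{\gamma\neq\alpha}\sum_{m\in\gamma}I_{\gamma\alpha}^{mr}(s)$, again by \eqref{eq:LemSolIndividualFluxes}; substituting and recognizing $\lambda_{ij}(e^{(t-s)A_{\alpha\alpha}})_{ir}=\Psi_{\alpha\beta}(t-s;r,i,j)$ turns the integral into $\sum_{r\in\alpha}\sum_{\gamma\neq\alpha}\sum_{m\in\gamma}\int_0^t\Psi_{\alpha\beta}(t-s;r,i,j)I_{\gamma\alpha}^{mr}(s)\,ds$, which is \eqref{eq:fluxes j first level}.

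I expect the only points requiring genuine care to be the identity $\textbf{e}_\alpha^\top E_{\alpha\alpha}=0$ and the index bookkeeping in the Duhamel step. The former follows from the definition of $E_{\alpha\alpha}$: its off-diagonal entry in row $i$, column $j$ is $\lambda_{ji}$, while the diagonal entry in column $j$ absorbs $-\sum_{k\in\alpha}\lambda_{jk}$, so the column sums telescope to zero. The latter is purely combinatorial, consisting in matching the componentwise expansion of the propagator to the prescribed kernel $\Psi_{\alpha\beta}$ and forcing term $\overline{I}_{\alpha\beta}^{ij}$; neither step involves an obstacle beyond bookkeeping.
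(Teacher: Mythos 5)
Your proposal is correct and follows essentially the same route as the paper's proof: differentiate $N_\alpha=\textbf{e}_\alpha^\top n_\alpha$, use $\textbf{e}_\alpha^\top E_{\alpha\alpha}=0$ and the identity $\textbf{e}_\alpha^\top C_\alpha=\sum_{\beta\neq\alpha}\textbf{e}_\beta^\top A_{\beta\alpha}$ to identify the gain/loss terms as the fluxes \eqref{eq:LemSolIndividualFluxes}, then apply Duhamel's formula to \eqref{eq:PrelDecompODE}, take the $i$-th component, multiply by $\lambda_{ij}$, and re-express the source via \eqref{eq:LemSolIndividualFluxes} to recover \eqref{eq:fluxes j first level} with kernel \eqref{eq:response function first level}. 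The only cosmetic difference is that you justify the flux formula upfront by the rate-times-occupation interpretation, whereas the paper lets it emerge from the computation of $dN_\alpha/dt$; the mathematics is identical.
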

\begin{proof} 
	By definition of $N_\alpha$ in \eqref{eq:DecompDefNumberComp}, we have
	\begin{align*}
		\frac{d}{dt} N_\alpha(t) = \textbf{e}^\top_\alpha \frac{d}{dt} n_\alpha(t) = \textbf{e}^\top_\alpha \left(  E_{\alpha \alpha} n_\alpha (t) - C_\alpha n_\alpha (t) + \sum_{\beta \in X \setminus \{\alpha\} } A_{ \alpha \beta } n_\beta (t) \right),
	\end{align*}
	where we used \eqref{eq:PrelDecompODE}. By definition of $E_{\alpha \alpha} $ we infer $ \textbf{e}^\top_\alpha  E_{\alpha \alpha} =0 $. Similarly, by definition of $C_\alpha $ and $A_{\beta \alpha } $ we have that 
	\begin{align*}
		\textbf{e}_\alpha^\top C_\alpha =  \sum_{\beta \in X \setminus \{\alpha\}}  \textbf{e}^\top_\beta  A_{\beta \alpha }.
	\end{align*}
	As a consequence of these observations we deduce that 
	\begin{align*}
		\frac{d}{dt} N_\alpha(t) &=- \sum_{\beta \in X \setminus \{\alpha\}}  \textbf{e}^\top_\beta  A_{ \beta \alpha } n_\alpha (t)   + \sum_{\beta \in X \setminus \{\alpha\} } \textbf{e}^\top_\alpha A_{ \alpha \beta } n_\beta (t) 
		\\
		&= - \sum_{\beta \in X \setminus \{\alpha\}}  \sum_{j \in \beta } \sum_{ i \in \alpha }  (A_{ \beta \alpha })_{ji} (n_\alpha (t))_i   + \sum_{i \in \alpha} \sum_{\beta \in X \setminus \{\alpha\} } \sum_{j \in \beta } (A_{ \alpha \beta })_{ij} (n_\beta (t))_j
		\\
		&= - \sum_{ i \in \alpha }\sum_{\beta \in X \setminus \{\alpha\}}  \sum_{j \in \beta } I^{ij}_{\alpha\beta}(t)   + \sum_{i \in \alpha} \sum_{\beta \in X \setminus \{\alpha\} } \sum_{j \in \beta } I^{ji}_{\beta\alpha}(t),
	\end{align*}
	with $ I_{\alpha \beta }^{ij}(t) $ given by \eqref{eq:LemSolIndividualFluxes}. 
	
	We now show that these fluxes satisfy \eqref{eq:fluxes j first level} with corresponding kernels and forcing functions in \eqref{eq:response function first level} respectively \eqref{eq:InitialFluxes}. To this end, we note that due to \eqref{eq:PrelDecompODE} we have 
	\begin{align*}
		n_\alpha(t)= e^{  t A_{\alpha \alpha  }} n_\alpha^0 + \sum_{\gamma \in X \setminus \{\alpha\} } \int_0^t  e^{ (t-s)A_{\alpha \alpha}  } A_{\alpha \gamma }  n_\gamma (s) ds . 
	\end{align*}
	Writing the $ i $-th component of the above expression and multiplying by $\lambda_{ij} $ yields
	\begin{align*} 
		I_{\alpha \beta }^{ij}(t) &= \lambda_{i j }( n_\alpha(t))_i
		\\
		&= \lambda_{i j } \sum_{r \in \alpha} \left( e^{t A_{\alpha \alpha} } \right)_{ir}  (n_\alpha^0)_r  + \lambda_{i j } \sum_{r \in \alpha} \sum_{\gamma \in X \setminus \{\alpha\} } \sum_{ m \in \gamma} \int_0^t \left( e^{(t-s) A_{\alpha \alpha}  }\right)_{ir}\left( A_{\alpha \gamma } \right)_{rm}   (n_\gamma (s))_m ds 
		\\
		&= \overline{I}_{\alpha \beta }^{ij} (t)  + \sum_{r \in \alpha} \sum_{\gamma \in X \setminus \{\alpha\} } \sum_{ m \in \gamma}  \int_0^t  \Psi_{\alpha \beta}(t-s;r,i,j) \lambda_{ mr } (n_\gamma (s))_m ds
		\\
		&= \overline{I}_{\alpha \beta }^{ij} (t)  + \sum_{r \in \alpha}\sum_{\gamma \in X \setminus \{\alpha\} }  \sum_{ m \in \gamma} \int_0^t  \Psi_{\alpha \beta}(t-s;r,i,j) I^{mr}_{\gamma\alpha}(s) ds
	\end{align*} 
	This concludes the proof. 
\end{proof}
With this we can give the proof of Theorem \ref{thm:ReductionODEtoRE}.
\begin{proof}[Proof of Theorem \ref{thm:ReductionODEtoRE}]
	According to Lemma \ref{lem:from ODE to RE for compartment} the function $ \{N_\alpha\} $ solves \eqref{eq:N_alpha first level} with fluxes satisfying \eqref{eq:ProofThmRenewalEq}. Thus defining the fluxes $ \{S_\alpha\} $, $ \{J_\alpha\} $ by
	\begin{align}\label{eq:IndividualFluxesToGeneral}
		(S_\alpha)_i = \sum_{\beta \in X \setminus \{\alpha\} } \sum_{j \in \beta }  I_{\beta\alpha}^{ji}, \quad (J_\alpha)_i = \sum_{\beta \in X \setminus \{\alpha\} } \sum_{j \in \beta }  I_{\alpha\beta}^{ij}.
	\end{align}
 	we obtain \eqref{eq:Concentration}. Note that the formula in \eqref{eq:ReductionODEtoREFluxes} is a consequence of \eqref{eq:LemSolIndividualFluxes} and \eqref{eq:IndividualFluxesToGeneral}. Moreover, from \eqref{eq:fluxes j first level} we obtain  
 	\begin{align}\label{eq:ProofThmRenewalEq}
 		I_{  \alpha \beta }^{ij} (t)=\overline{I}_{\alpha \beta }^{ij} (t) + \sum_{ r \in \alpha } \int_0^t \Psi_{\alpha \beta }(t-s; r, i, j ) (S_\alpha(s))_r ds.
 	\end{align}
 	Consequently, summing over $ \beta \in X \setminus \{\alpha\}  $ and over $ j \in \beta $ yields that $\{ J_\alpha \} $ satisfies equation \eqref{eq:REOutflux} with kernels and forcing functions given by 
 	\begin{align*}
 		(K_{\alpha}(t))_{ir} = \sum_{\beta \in X \setminus \{\alpha\} } \sum_{j \in \beta } \Psi_{\alpha \beta }(t; r, i, j ),
 		\quad
 		(J_\alpha^0(t))_i = \sum_{\beta \in X \setminus \{\alpha\} } \sum_{j \in \beta } \overline{I}_{\alpha \beta }^{ij} (t).
 	\end{align*}
 	Thus, \eqref{eq:ThmIntegralKernelOutflux} and \eqref{eq:ThmInOutFlux} follow from the formula for the functions $\Psi_{\alpha \beta } $, cf. \eqref{eq:response function first level}. 
  We can argue similarly to deduce that $ \{ S_\beta \} $ satisfy equation \eqref{eq:REInflux} by summing over $ \alpha\in X $ and $i \in \alpha $ in equation \eqref{eq:fluxes j first level} and defining 
 	\begin{align*}
 		(G_{\alpha\beta}(t))_{jr} = \sum_{ i \in \alpha } \Psi_{\alpha\beta}(t; r, i, j  ),
 		\quad
 		(S_\beta^0(t))_i = \sum_{\alpha \in X \setminus \{\beta\} } \sum_{i \in \alpha } \overline{I}_{\alpha \beta }^{ij} (t)
 	\end{align*}
 	The two equations \eqref{eq:ThmIntegralKernelInflux} and \eqref{eq:ThmInOutFlux} are, therefore, a consequence of \eqref{eq:response function first level}.
\end{proof}
\begin{remark}
The kernels given by \eqref{eq:ThmIntegralKernelInflux} and \eqref{eq:ThmIntegralKernelOutflux} satisfy both the conditions in Lemma \ref{lem:PositivityGenRFE} and \ref{lem:ConservationMassGenRFE}. 
 Indeed, we have with for any $ T\geq0 $ 
	\begin{align*}
		\int_0^T [\textbf{e}_\alpha^\top K_\alpha(r)]_j\, dr &= \sum_{i \in \alpha}\sum_{\beta\in X\setminus\{\alpha\}}\sum_{k\in \beta} \int_0^T (A_{\beta\alpha})_{ki}\left( e^{tA_{\alpha\alpha}} \right)_{ij} \, dr
		\\
		&= \sum_{i \in \alpha}\int_0^T (C_{\alpha})_{ii}\left( e^{tA_{\alpha\alpha}} \right)_{ij} \, dr.
	\end{align*}
	Recall the definition of $ C_\alpha $ in Section \ref{sec:DecompGeneralSyst}. Using $ A_{\alpha\alpha}= E_{\alpha\alpha}-C_\alpha $ we obtain
	\begin{align*}
		\int_0^T \textbf{e}_\alpha^\top K_\alpha(r)\, dr = \textbf{e}_\alpha^\top \left( I - e^{TA_{\alpha\alpha}} \right) + \textbf{e}_\alpha^\top\left( \int_0^TE_{\alpha\alpha}e^{tA_{\alpha\alpha}}\, dt \right) = \textbf{e}_\alpha^\top \left( I - e^{TA_{\alpha\alpha}} \right),
	\end{align*}
	since $ \textbf{e}_\alpha^\top E_{\alpha\alpha}=0 $. Since the semigroup $e^{tA_{\alpha\alpha}}$ induced by $ A_{\alpha\alpha} $ is positivity preserving we obtain the second inequality in \eqref{eq:LemmaPositivityAssumption}. For the first in \eqref{eq:LemmaPositivityAssumption} we use \eqref{eq:ThmInOutFlux} and the previous reasoning
	\begin{align*}
		\int_0^T\textbf{e}_\alpha^\top K_\alpha(t)n^0_\alpha \, dt = \textbf{e}_\alpha^\top \left( I - e^{TA_{\alpha\alpha}} \right)n^0_\alpha \leq \textbf{e}_\alpha^\top n^0_\alpha = N^0_\alpha.
	\end{align*}
	Finally, both conditions in \eqref{eq:LemmaConservMassCondition} follow from \eqref{eq:ThmIntegralKernelInflux} and \eqref{eq:ThmIntegralKernelOutflux}. More precisely, we have for all $\alpha\in X$ and $ k\in \alpha $
	\begin{align*}
		\sum_{\beta\in X\setminus\{\alpha\}} [\textbf{e}_\beta^\top G_{\alpha\beta}(t) ]_k- [\textbf{e}_\alpha^\top K_\alpha(t)]_k = 
		\\
		\sum_{\beta\in X\setminus\{\alpha\}} \sum_{j \in \beta}\sum_{i\in\alpha} (A_{\beta\alpha})_{ji} \left( e^{tA_{\alpha\alpha}} \right)_{ik} - \sum_{\beta\in X\setminus\{\alpha\}} \sum_{j \in \beta}\sum_{i\in\alpha} (A_{\beta\alpha})_{ji} \left( e^{tA_{\alpha\alpha}} \right)_{ik} =0.
	\end{align*}
\end{remark}

We conclude this section with the special case of system of ODEs \eqref{eq:IntrodModelODE} with graph structure $ (\Omega,\mathcal{E})  $ that is decomposed into compartments with at most one entrance point (see Definition \ref{def:EntrancePointGraph}). Then the system of equations \eqref{eq:Concentration}-\eqref{eq:REInflux}-\eqref{eq:REOutflux} can be reduced to \eqref{eq:IntroConcentrationOneEntrance}-\eqref{eq:IntroRE one entrance}-\eqref{eq:IntroConcentrationOneOutgoing}.

\begin{lemma}\label{lem: RE one entrance and general} 
	Consider $n(t) \in \mathbb R_+^{|\Omega|}$ solution to \eqref{eq:DecompModelODE} and $N_\alpha $ defined in \eqref{eq:DecompDefNumberComp}. Furthermore, assume that for all $\alpha \in X $ there is at most one entrance point in $ (\Omega,\mathcal{E}) $, denoted by $i_\alpha$ if it exists. Then the following holds.
	\begin{enumerate}[(i)]
		\item For all $ \alpha \in X $ we have $ (S_\alpha^0)_j=(S_\alpha)_j=0 $ for all $ j\neq i_\alpha $.
		\item The functions $ \{B_\alpha\}=\{(S_\alpha)_{i_\alpha}\} $, $ \{D_\alpha\} = \{\textbf{e}_\alpha^\top J_\alpha\} $ satisfy \eqref{eq:IntroRE one entrance}-\eqref{eq:IntroConcentrationOneOutgoing} with kernels and forcing functions 
		\begin{align*}
			k_\alpha &:= 
			\begin{cases}
				\sum_{j\in \alpha}(K_\alpha  )_{j i_\alpha } & \text{if } i_\alpha\in \alpha \text{ exists,}
				\\
				0 & \text{otherwise,}
			\end{cases} 
			\qquad
			\Phi_{\beta\alpha} := 
			\begin{cases}
				(G_{ \beta\alpha } )_{i_\alpha i_\beta }  & \text{if } i_\alpha\in \alpha,\, i_\beta\in \beta \text{ exist,}
				\\
				0 & \text{otherwise,}
			\end{cases}
			\\
			B_\alpha^0 &: = \begin{cases}
				( S^0_\alpha )_{i _\alpha} & \text{if } i_\alpha \in \alpha \text{ exists,}
				\\
				0 & \text{otherwise,}
			\end{cases}
			\qquad
			D_\alpha^0 := \textbf{e}_\alpha^\top J^0_\alpha.
		\end{align*}
		\item Finally, $ \{N_\alpha\} $ satisfies \eqref{eq:IntroConcentrationOneEntrance} with fluxes $ \{B_\alpha,D_\alpha\} $.
	\end{enumerate}
\end{lemma}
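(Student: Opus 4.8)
The plan is to show that the single-entrance-point hypothesis collapses the vector-valued system \eqref{eq:Concentration}-\eqref{eq:REInflux}-\eqref{eq:REOutflux} to the scalar system \eqref{eq:IntroConcentrationOneEntrance}-\eqref{eq:IntroRE one entrance}-\eqref{eq:IntroConcentrationOneOutgoing}, with the whole argument resting on part (i): the influx vectors are supported on the single entrance point, so the matrix-vector contractions degenerate to scalar products.

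First I would prove (i). By the explicit formula \eqref{eq:ReductionODEtoREFluxes}, $(S_\alpha(t))_i = \sum_{\beta \neq \alpha} \sum_{j \in \beta} (A_{\alpha\beta})_{ij} (n_\beta(t))_j = \sum_{\beta \neq \alpha} \sum_{j \in \beta} \lambda_{ji} (n_\beta(t))_j$. If $i \in \alpha$ is not the entrance point $i_\alpha$, then by Definition \ref{def:EntrancePointGraph} there is no $\beta \neq \alpha$ and $j \in \beta$ with $(j,i) \in \mathcal{E}$, i.e. $\lambda_{ji} = 0$ for every such $j$; hence $(S_\alpha(t))_i = 0$. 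The same computation applied to $S_\alpha^0(t) = \sum_{\beta \neq \alpha} A_{\alpha\beta} e^{t A_{\beta\beta}} n_\beta^0$ from \eqref{eq:ThmInOutFlux} and \eqref{eq:ThmIntegralKernelInflux} gives $(S_\alpha^0(t))_i = 0$ for $i \neq i_\alpha$, since the leftmost factor $A_{\alpha\beta}$ has vanishing $i$-th row. If $\alpha$ has no entrance point at all, the same reasoning yields $S_\alpha \equiv 0 \equiv S_\alpha^0$, matching the convention $B_\alpha = B_\alpha^0 = 0$.

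Next I would derive the scalar renewal equation in (ii). Taking the $i_\alpha$-component of \eqref{eq:REInflux} and using (i) to replace each product $G_{\beta\alpha}(t-s) S_\beta(s)$ by its single surviving term $(G_{\beta\alpha}(t-s))_{i_\alpha i_\beta} (S_\beta(s))_{i_\beta}$ turns \eqref{eq:REInflux} into \eqref{eq:IntroRE one entrance} with $\Phi_{\beta\alpha} = (G_{\beta\alpha})_{i_\alpha i_\beta}$ and $B_\alpha^0 = (S_\alpha^0)_{i_\alpha}$; terms with $\beta$ lacking an entrance point drop out because $S_\beta \equiv 0$, consistent with the convention $\Phi_{\beta\alpha} := 0$. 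For the outflux I would apply $\textbf{e}_\alpha^\top$ to \eqref{eq:REOutflux}: again by (i) only the $i_\alpha$-column of $K_\alpha$ contributes, so $\textbf{e}_\alpha^\top K_\alpha(t-s) S_\alpha(s) = [\textbf{e}_\alpha^\top K_\alpha(t-s)]_{i_\alpha} B_\alpha(s)$, and since $[\textbf{e}_\alpha^\top K_\alpha]_{i_\alpha} = \sum_{j \in \alpha}(K_\alpha)_{j i_\alpha} = k_\alpha$, this yields \eqref{eq:IntroConcentrationOneOutgoing} with $D_\alpha^0 = \textbf{e}_\alpha^\top J_\alpha^0$.

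Finally, part (iii) is immediate: applying $\textbf{e}_\alpha^\top$ to \eqref{eq:Concentration} and invoking (i) gives $\textbf{e}_\alpha^\top S_\alpha = (S_\alpha)_{i_\alpha} = B_\alpha$ and $\textbf{e}_\alpha^\top J_\alpha = D_\alpha$ by definition, so $\frac{dN_\alpha}{dt} = B_\alpha - D_\alpha$, which is \eqref{eq:IntroConcentrationOneEntrance}. I do not expect a genuine analytic obstacle here; the essential content is entirely the support statement (i), and the only thing requiring care is the bookkeeping for compartments with no entrance point, where the defining formulas for $k_\alpha$, $\Phi_{\beta\alpha}$ and $B_\alpha^0$ are set to zero and one must check that each reduced equation degenerates to a consistent identity.
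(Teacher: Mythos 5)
Your proposal is correct and takes essentially the same route as the paper's proof: part (i) via the vanishing rows of $A_{\alpha\beta}$ (hence of $G_{\beta\alpha}$, $S_\alpha^0$ and $S_\alpha$) for non-entrance states, then reading off the $i_\alpha$-component of \eqref{eq:REInflux} and contracting \eqref{eq:REOutflux} and \eqref{eq:Concentration} with $\textbf{e}_\alpha^\top$. The only cosmetic difference is that you verify the support property of $S_\alpha$ directly from the explicit formula \eqref{eq:ReductionODEtoREFluxes}, whereas the paper channels everything through the kernels $G_{\beta\alpha}$; both are the same observation.
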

\begin{proof}
    We prove each claim separately.
    
    \textit{Claim (i).} Let us note that for $\alpha,\, \beta\in X$ and $j\neq i_\alpha$, $j\in \beta$ we have $(A_{\alpha\beta})_{j\ell}= \lambda_{\ell j} =0$ for all $\ell\in\beta$. In particular, by \eqref{eq:ThmIntegralKernelInflux} we have $ (G_{\beta\alpha})_{jk} \equiv 0 $ for all $k\in \beta$. Hence, \eqref{eq:ThmInOutFlux} yields (i).
    
    \textit{Claim (ii).} By Theorem \ref{thm:ReductionODEtoRE} the fluxes ${S_\alpha}$ solve the system \eqref{eq:REInflux}. The $ i_\alpha $-th component, if it exists, yields \eqref{eq:IntroRE one entrance} for $\Phi_{\beta\alpha}$ as in (ii) due to (i).

    Furthermore, $\{J_\alpha\}$ solve \eqref{eq:REOutflux}. Thus, (i) and summing over $j\in \alpha$ yields \eqref{eq:IntroConcentrationOneOutgoing} with kernels $\Phi_{\alpha}$ as in (ii).

    \textit{Claim (iii).} Finally, we conclude (iii) from \eqref{eq:Concentration} and $ B_\alpha = \textbf{e}_\alpha^\top S_\alpha = (S_\alpha)_{i_\alpha} $ if $ i_\alpha $ exists, $ B_\alpha =0 $ otherwise.
\end{proof}
\begin{remark}
    Let us recall that the kernels $ \{\Phi_{ \alpha\beta }\}_{\alpha,\beta\in X} $ in \eqref{eq:IntroConcentrationOneEntrance}-\eqref{eq:IntroRE one entrance}-\eqref{eq:IntroConcentrationOneOutgoing} are assumed to satisfy 
    \begin{align}\label{eq:KernelsOneEntranceConservation}
		\sum_{\beta\in X\setminus \{\alpha\}} \int_0^\infty \Phi_{ \alpha\beta }(t)\, dt =1.
	\end{align} 
    if the compartment $\alpha$ is not a sink, i.e. there are no connections to any other compartment $\beta\neq\alpha$. In the latter case the kernels are zero $\Phi_{\alpha\beta}\equiv0$ for all $\beta\neq\alpha$.
    
    However, even if $\Phi_{\alpha\beta}\neq0$ a further assumption on the compartment $\alpha$ is necessary to obtain \eqref{eq:KernelsOneEntranceConservation}. Observe that from (ii) in Lemma \ref{lem: RE one entrance and general} and form \eqref{eq:ThmIntegralKernelInflux} it follows that
	\begin{align*}
		\Phi_{\alpha\beta }(t) =\left( A_{\beta\alpha}e^{t A_{\alpha\alpha}} \right)_{i_\beta, i_\alpha}.
	\end{align*}
    Thus, $\Phi_{\alpha\beta}\neq0$ implies that there is a path in $(\Omega,\mathcal{E})$ from $i_\alpha$ to $i_\beta$. However, in $\alpha$ there could be a state $\Delta\in \alpha$ that has no path to any entrance point $i_\beta$, $\beta\neq\alpha$, but there is a path from $i_\alpha$ to $\Delta$. Hence, there is a sink inside the compartment $\alpha$. In this case, \eqref{eq:KernelsOneEntranceConservation} fails. The reason is that $\lim_{t\to\infty} e^{tA_{\alpha\alpha}}e_{i_\alpha}\neq0$. In fact, the left hand side in \eqref{eq:KernelsOneEntranceConservation} can be reformulated as follows. Note that $ (A_{\beta\alpha})_{j,i}=0 $ for $ j\neq i_\beta $ and any $ i\in \alpha $, since $ i_\beta $ is the only entrance point, cf. Definition \ref{def:EntrancePointGraph}. We conclude from this and the conservation property $ \sum_{ \beta \in X \setminus \{\alpha\} } \textbf{e}_{|\beta|}^\top A_{\beta\alpha} = -\textbf{e}_{|\alpha|}^\top A_{\alpha\alpha} $ that
	\begin{align*}
		\sum_{\beta\in X\setminus \{\alpha\}} \int_0^\infty \Phi_{ \alpha\beta }(t)\, dt =  -\textbf{e}_{|\alpha|}^\top \int_0^\infty A_{\alpha\alpha}e^{tA_{\alpha\alpha}} e_{i_\alpha} \, dt = 1-\lim_{t\to \infty}\textbf{e}_{|\alpha|}^\top e^{tA_{\alpha\alpha}} e_{i_\alpha}.
	\end{align*}
    Thus, we need necessarily $\lim_{t\to \infty}\textbf{e}_{|\alpha|}^\top e^{tA_{\alpha\alpha}} e_{i_\alpha}=0$ to ensure \eqref{eq:KernelsOneEntranceConservation}. This is satisfied if any state $j\in\alpha$ that has a path from $i_\alpha$ to $j$ also has a path to some entrance point $i_\beta$, $\beta\neq\alpha$.
\end{remark}

\section{Approximation of measures by means of response functions} 
\label{sec:ApproximationIntegralKernel}

In this section, we are concerned with approximation results for the kernels $ \{\Phi_{ \alpha\beta }\}_{\alpha,\beta\in X} $ appearing in \eqref{eq:IntroRE one entrance}. We restrict ourselves to the case that each compartment has at most one entrance point. Accordingly, the RFEs have the form \eqref{eq:IntroConcentrationOneEntrance}-\eqref{eq:IntroRE one entrance}-\eqref{eq:IntroConcentrationOneOutgoing}.
In Section \ref{sec:density} we prove that the  set of the response functions obtained form ODEs of the form \eqref{eq:IntrodModelODE} is dense in the space of probability measures.
In Section \ref{sec:detailed balance}, we prove that if we assume that \eqref{eq:IntrodModelODE} satisfies the detailed balance condition, then the response functions must be completely monotone functions.

\subsection{Density of the response functions obtained from ODEs in the space of probability measures} \label{sec:density}
Recall that in Section \ref{sec:ReformODEasRFE} we proved that a system of ODEs of the form \eqref{eq:IntrodModelODE} can be reduced to the system \eqref{eq:IntroConcentrationOneEntrance}-\eqref{eq:IntroRE one entrance}-\eqref{eq:IntroConcentrationOneOutgoing} by decomposing the state space into compartments with at most one entrance point, cf. Theorem \ref{thm:ReductionODEtoRE} and Lemma \ref{lem: RE one entrance and general}. Furthermore, the corresponding kernels $ \{\Psi_{ \alpha\beta }\}_{\alpha,\beta\in X} $ have the form
\begin{align}\label{eq:ApproxKernelsOneEntranceFormula}
	\Psi_{\alpha\beta }(t) =\left( A_{\beta\alpha}e^{t A_{\alpha\alpha}} \right)_{i_\beta, i_\alpha} = \sum_{j \in \alpha } \left(A_{\beta\alpha} \right)_{i_\beta, j}\left( e^{t A_{\alpha\alpha}} \right)_{j,i_\alpha}= \sum_{j \in \alpha } \lambda_{j, i_\beta}\left( e^{t A_{\alpha\alpha}} \right)_{j,i_\alpha} ,
\end{align}
where $ i_\alpha\in \alpha,\, i_\beta\in\beta $ are the entrance points in $\alpha $ and $\beta$, if they exist, $ \Psi_{\alpha\beta }\equiv0 $ otherwise.
	
We now show that for any set of integral kernels $ \{\Phi_{ \beta \alpha }\}_{\beta,\alpha\in X} $ satisfying \eqref{eq:IntodKernelConservation} it is possible to find a sequence of
 kernels $\{\Psi_{\alpha\beta}\}$ as in \eqref{eq:ApproxKernelsOneEntranceFormula} that is arbitrary close (in a suitable sense) to the kernel $ \{\Phi_{ \beta \alpha }\}_{\beta,\alpha\in X} $. 
In other words, we prove that for any system of the form \eqref{eq:IntroConcentrationOneEntrance}-\eqref{eq:IntroRE one entrance}-\eqref{eq:IntroConcentrationOneOutgoing} we can find an approximating sequence of systems of ODEs  of the form \eqref{eq:IntrodModelODE}. 
	
In order to prove this we need to specify 
\begin{enumerate}[(i)]
	\item a state space $ \Omega' $, on which the approximating ODEs will be defined;
	\item the jump rates between the states, i.e. a matrix $ A\in \R^{|\Omega'|\times|\Omega'|} $ yielding an ODE system \eqref{eq:IntrodModelODE}. This defines a directed graph $ (\Omega',\mathcal{E}') $;
	\item a decomposition $ X'\subset 2^{\Omega'} $ into compartments. 
\end{enumerate}
Since the kernels $ \{\Phi_{ \alpha \beta }\}_{\beta,\alpha\in X} $ are labeled by the set $ X $, the decomposition $ X' $ needs to be identified with $ X $. More precisely, we require the existence of a relabeling, i.e. there is a bijective mapping $ \iota:X\to X' $ which associates to each compartment $ \alpha\in X $ a compartment $ \iota(\alpha)=\alpha'\in X' $. Then, we apply the procedure in Section \ref{sec:ReformODEasRFE} to $ (\Omega',\mathcal{E}') $ and the decomposition $ X' $ which yields kernels $ \{\Psi_{\beta'\alpha'}\}_{\beta',\alpha'\in X'} $. Choosing $ (\Omega',\mathcal{E}') $ and $ X' $ appropriately, these kernels yield an approximation to $ \{\Phi_{ \alpha \beta }\}_{\beta,\alpha\in X} $. 

The approximation result holds in terms of the weak topology of measures. Recall that we denote by $ d_w $ a metric inducing weak convergence, see Subsection \ref{sec:notation}. Let us note that in the precise statement below we assume that the response functions are merely finite, non-negative measures $\Phi_{\alpha\beta}\in \mathcal{M}_{+,b}(\R_+)$ rather than measurable functions.
\begin{theorem}\label{thm:ApproximationIntegralKernel}
	Consider a family of finite, non-negative measure kernels $ \{\Phi_{ \alpha \beta }\}_{\beta,\alpha\in X} $ with $ |X|<\infty $ satisfying \eqref{eq:IntodKernelConservation} or \eqref{eq:ApproxKernelsOneEntranceConservationzeroIntro}. Let $ \varepsilon>0 $ be arbitrary. Then, we can find a finite state space $ \Omega' $, a decomposition $ X'\subset 2^{\Omega'} $ and a matrix $ A\in \R^{|\Omega'|\times |\Omega'|} $ with the following properties.
	\begin{enumerate}[(i)]
		\item The matrix $ A $ satisfies $ A_{ij}\geq0 $ for all $ i\neq j $, $ i,\, j\in \Omega' $ and $ \textbf{e}_{|\Omega'|}^\top A=0 $.
		\item There is a bijection $ \iota : X\to X' $.
		\item The partition $ X' $ decomposes $ (\Omega';\mathcal{E}') $ into compartments with at most one entrance point, denoted by $ i_{\alpha}\in \alpha $, $ \alpha\in X' $ if it exists, see Definition \ref{def:EntrancePointGraph}.
		\item The kernels given by 
        \begin{align}\label{eq:ThmApproxKernelsOneEntranceFormula}
			\Psi_{ \alpha\beta }(t) = \sum_{j \in \alpha } \left(A_{\beta\alpha} \right)_{i_\beta, j}\left( e^{t A_{\alpha\alpha}} \right)_{j,i_\alpha}
		\end{align}
		if $ i_{\alpha}, \, \omega_{\alpha,\beta}\in \alpha $ exist, $\Psi_{ \alpha\beta }\equiv 0 $ otherwise are such that 
  	     \begin{align*}
			\int_0^\infty \Psi_{ \alpha\beta }(t)\, dt = \Phi_{ \alpha \beta }(\R_+) =: p_{\alpha\beta},
            \quad
            \sum_{\alpha,\beta\in X, \, p_{\alpha\beta}>0} d_w \left( \frac{1}{p_{\alpha\beta}}\Psi_{ \iota(\alpha),\iota(\beta) }(t)\, dt, \, \frac{1}{p_{\alpha\beta}}\Phi_{ \alpha \beta }\right)  \leq \varepsilon.
		\end{align*}
	\end{enumerate}
\end{theorem}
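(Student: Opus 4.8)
My strategy is to reduce the approximation of the full family $\{\Phi_{\alpha\beta}\}$ to a single building block: for a fixed compartment $\alpha$ with entrance point $i_\alpha$, I need to construct a small internal Markov chain so that the exit-time/exit-destination distribution reproduces the measures $\{\Phi_{\alpha\beta}\}_{\beta}$ up to $\varepsilon$ in the weak topology. The formula \eqref{eq:ApproxKernelsOneEntranceFormula} shows that $\Psi_{\alpha\beta}(t)\,dt$ is the distribution of the time at which a particle, started at $i_\alpha$ and moving according to the substochastic semigroup $e^{tA_{\alpha\alpha}}$, exits the compartment $\alpha$ by jumping into the entrance point $i_\beta$ of $\beta$. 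So the whole theorem is really a statement about which joint (exit-time, exit-label) distributions are approximable by phase-type-like constructions inside each compartment. I would set up the construction compartment by compartment and then glue the pieces along the graph $X$.

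**Key steps in order.**
First I would recall the classical fact that the phase-type distributions (absorption times of finite continuous-time Markov chains) are weakly dense in the probability measures on $\R_+$; concretely, mixtures of Erlang (gamma with integer shape) distributions are dense, since one can approximate any point mass $\delta_{t_0}$ by an Erlang law with mean $t_0$ and large shape parameter, and then approximate a general measure by a finite convex combination of point masses. This density in the weak topology is the analytic heart and I would state it as a lemma. Second, for each compartment $\alpha$ and each target $\beta$ with $p_{\alpha\beta}=\Phi_{\alpha\beta}(\R_+)>0$, I would normalize to the probability measure $\tfrac{1}{p_{\alpha\beta}}\Phi_{\alpha\beta}$ and pick a phase-type chain whose absorption-time law is within $\varepsilon/|X|^2$ of it. Third, I would assemble these chains into a single internal state space for $\alpha$: the particle enters at $i_\alpha$, and with probability $p_{\alpha\beta}$ it is routed (via the very first transition or via a small "splitter" gadget at the entrance) into the copy of the $\beta$-chain, whose terminal state is wired by a rate-one arrow to the entrance point $i_\beta\in\beta$. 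The constraint $\sum_\beta p_{\alpha\beta}=1$ from \eqref{eq:IntodKernelConservation} guarantees these routing probabilities sum to one, so the total exit mass is conserved; if $\alpha$ satisfies \eqref{eq:ApproxKernelsOneEntranceConservationzeroIntro} it is a sink and I make it absorbing. Fourth, I would take $\Omega'$ to be the disjoint union of these internal state spaces over $\alpha\in X$, define $\iota$ as the obvious relabeling, set the off-diagonal entries of $A$ from the chosen rates (so $A_{ij}\ge 0$ for $i\neq j$) and the diagonal by $A_{ii}=-\sum_{k\neq i}A_{ki}$ to get $\textbf{e}_{|\Omega'|}^\top A=0$, thereby verifying (i)–(iii). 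Fifth, I would check that with this $A$ the formula \eqref{eq:ThmApproxKernelsOneEntranceFormula} indeed yields $\int_0^\infty \Psi_{\alpha\beta}=p_{\alpha\beta}$ (the routing mass) and that the normalized $\Psi$ matches the phase-type law chosen in step two, so summing the $|X|^2$ errors gives the $\varepsilon$ bound.

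**The main obstacle.**
The delicate point is ensuring that a single internal chain for $\alpha$ can simultaneously produce the correct \emph{joint} behavior: the right total mass $p_{\alpha\beta}$ going to each $\beta$ \emph{and} the right time-distribution \emph{conditioned} on the destination, all from one entrance point $i_\alpha$ and all while keeping $e^{tA_{\alpha\alpha}}$ honestly substochastic with the single-entrance-point constraint (iii) intact. The clean way to handle this is to make the destinations decouple: insert a fast initial randomizing step at $i_\alpha$ that, essentially instantaneously, sends the particle into exactly one of the $\beta$-branches with probability $p_{\alpha\beta}$, after which the branches evolve independently. One must then argue that the extra time spent in this randomizing gadget can be made negligible in the weak topology (take its rates large), so it perturbs each conditional law by less than $\varepsilon/|X|^2$. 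A second, more bookkeeping-heavy obstacle is verifying that the disjoint-union construction respects Definition \ref{def:EntrancePointGraph}: each compartment must have \emph{at most one} entrance point, which forces me to route all incoming arrows (from every $\beta$) into the single designated state $i_\alpha$ rather than into the internal branch states; I would arrange the splitter gadget to sit exactly at $i_\alpha$ so that this is automatic. Everything else — positivity of kernels, the identity $\int_0^\infty\Psi_{\alpha\beta}=p_{\alpha\beta}$, and the matrix conditions in (i) — then follows from the explicit construction and the conservation computation already carried out in the Remark after Theorem \ref{thm:ReductionODEtoRE}.
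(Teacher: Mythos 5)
Your proposal follows essentially the same route as the paper's proof: weak denseness of Erlang mixtures (the phase method, which the paper quotes from queuing theory), a per-pair approximation of the normalized kernels $\frac{1}{p_{\alpha\beta}}\Phi_{\alpha\beta}$, and a splitter gadget at the unique entrance point $i_\alpha$ routing mass into the $\beta$-branch with probability $p_{\alpha\beta}$ (the paper uses states $\zeta_{\alpha\beta}$ reached from $i_\alpha$ at rate $Mp_{\alpha\beta}$), after which the branches are glued along the graph. The verification of (i)--(iii), the use of $\sum_\beta p_{\alpha\beta}=1$ for conservation, the treatment of sinks, and the mass identity $\int_0^\infty \Psi_{\alpha\beta}(t)\,dt=p_{\alpha\beta}$ are the same bookkeeping as in the paper.

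One concrete slip: wiring each branch's terminal state to $i_\beta$ by a \emph{rate-one} arrow convolves the conditional exit law with an $\mathrm{Exp}(1)$ delay. Unlike the splitter delay, this perturbation does not vanish: if $\frac{1}{p_{\alpha\beta}}\Phi_{\alpha\beta}$ were, say, $\delta_{t_0}$, the constructed law would be smeared over $[t_0,\infty)$ with spread of order one, so the estimate in (iv) fails as written. The remedy is exactly the one you already invoke for the splitter gadget: make that terminal rate large, or better, let the last transition of the Erlang chain itself (at the common large rate $M$) point directly into $i_\beta$ with no extra state, which is what the paper does. With that correction, and with the convolution-continuity argument you sketch for the splitter applied to this arrow as well, your construction coincides with the paper's.
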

Furthermore, let us mention that the above result is reminiscent of the so-called \textit{phase method} in queuing theory, see e.g. \cite[Section III.4]{Asmussen2003Queues}, allowing to approximate any probability measure on $ (0,\infty) $ by a combination of exponentially distributed times. For the proof of Theorem \ref{thm:ApproximationIntegralKernel} we make use of this construction which contains probabilistic arguments.
\begin{proof}[Proof of Theorem \ref{thm:ApproximationIntegralKernel}]
	We split the proof into two steps. In the first step we recall a standard result from queuing theory which allows to approximate any probability distribution on $\R_+$. Furthermore, we show how to obtain such approximations from ODE models of the form \eqref{eq:IntrodModelODE} for a certain choice of matrices $A$. In the context of the above theorem, this allows to approximate one given response function by an ODE model. This serves as a basic building block for the next step. In Step 2 we then show how to construct the whole state space $\Omega'$ with a partition $X'$ as well as the matrix $A$ that allows to find an approximation for all response functions $\{\Phi_{\alpha\beta}\}$ simultaneously.
	
	\textit{Step 1.} Let us recall the following result from \cite[Theorem 4.2]{Asmussen2003Queues}: any probability measure $ \Phi $ on $ (0,\infty) $ can be approximated in the weak topology by distributions $ F $ with density
	\begin{align}\label{eq:ProofErlangDist}
		f(t):=\sum_j^L q_j \gamma_{M,m_j}(t), \quad \gamma_{M,m}(t) = \dfrac{M^{m+1}}{m!} t^m e^{-M t}, \quad t>0
	\end{align}
	for some $ L\in \N $, $ q_j>0$, $ \sum_j q_j=1 $ and some $ M\in \N $, $ m_j\in \N $. Here, $ \gamma_{M,m} $ is the density of the Gamma distribution appearing as an $ m $-fold convolution of the exponential distribution with rate $ M\in \N $. Thus, for $ \varepsilon>0 $ we can find corresponding parameters such that $ d_w(F,\Phi)<\varepsilon $ with the measure $ F= f(t)\, dt $.

 We now construct an ODE system of the form \eqref{eq:IntrodModelODE} and relate $f(t)$ in \eqref{eq:ProofErlangDist} with the response function corresponding to the ODEs system.
   We start by describing the structure of the graph. Consider one starting node $ \zeta $ and $ L $ chains with $ m_j-1 $ nodes (that we denote with $x_{n j } $ with $1 \leq j \leq L $ and $1 \leq n \leq m_j -1 $) bifurcating from $ \zeta $. Furthermore, all chains have $ \omega $ as final node, see Figure \ref{fig:proofapprox1}. 
	\begin{figure}[ht]
		\centering
		\includegraphics[width=0.5\linewidth]{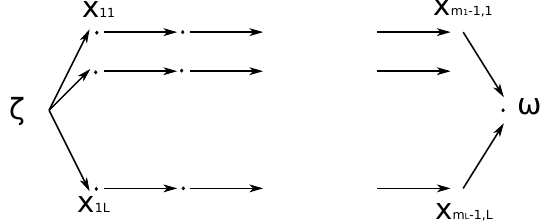}
		\caption{Graph structure corresponding to approximation in Step 1.}
		\label{fig:proofapprox1}
	\end{figure}

We assume that the rate to jump from $\zeta $ to $x_{1 j } $ is equal to $q_j M$ while the rate to jump from $x_{n-1, j}$ to $x_{n, j}$ is equal to $M$.
Finally the rate to go from $x_{m_{j}-1 }$ to $\omega $ is also equal to $M$. 
This defines the ODE system 
\begin{align} \label{ODE approx}
   \begin{split}
        \frac{d \zeta }{dt} &= - M \zeta,  \quad \zeta(0)=1 \\
        \frac{d x_{1,j }}{dt } &= q_j M \zeta - M x_{1j},  \quad  x_{1, j }(0)=0, \quad \text{ for } 1 \leq j \leq L  \\
        \frac{d x_{n, j }}{dt } &= M x_{n-1, j} - M x_{n, j}, \quad x_{n, j }(0)=0,  \quad \text{ for } 1 \leq j \leq L \text{ and } 2 \leq n \leq m_j-1
   \end{split}
\end{align} 
which can be written also as $\frac{dx}{dt}= A x $ where $x=(\zeta, x_{1, 1}, \dots x_{m_L-1 , L }) $ and where the matrix $ A $ is such that $ A_{ij}\geq0 $ for $ i\neq j $ and $ \textbf{e}_n^\top A=0 $.
	
We now prove that $ M\sum_{j=1}^L x_{m_{j}-1, j } (t) = f(t) $. Indeed 
	\begin{align}\label{eq:ProofApproxKernelsOneEntranceFormula}
		\hat{f}(z)= \sum_{j =1 }^L q_j \widehat{\gamma_{M , m_j} }(z) = \sum_{j =1 }^L \frac{ q_j M^{m_j +1}}{(1+ M z )^{m_j+1}}  = M \sum_{j=1 }^L \widehat{x_{m_j-1, j }}(z) 	\end{align}
  where the last inequality has been obtained solving \eqref{ODE approx} via Laplace transforms. 
	Equality \eqref{eq:ProofApproxKernelsOneEntranceFormula} implies that $ M\sum_{j \in L} x_{m_{j}-1, j } (t) = f(t) $. 
 Notice that if $\alpha=\{\zeta, x_{1,1}, \dots x_{m_{L}-1, L } \}$ and  $\beta =\{ \omega \}$ are two compartments, 
then $f(t) $ is the response function $\Phi_{\alpha \beta } $ in Lemma \ref{lem: RE one entrance and general}. 
	
	\textit{Step 2.} We now use Step 1 to construct the state space $ \Omega' $, the matrix $ A $ and the partition $ X' $. To this end, we construct as in Step 1  probability measures $ F_{\alpha\beta} $,  for each pair of compartments, with densities $ f_{\alpha\beta} $ of the form \eqref{eq:ProofErlangDist} that satisfy
	\begin{align}\label{eq:ProofApproxKernels}
		\sum_{\alpha, \beta \in X, \, p_{\alpha\beta}>0} d_w \left( F_{\alpha\beta}, \dfrac{1}{p_{\alpha\beta}}\Phi_{ \alpha \beta }\right) \leq \varepsilon.
	\end{align}
	Observe that $ \frac{1}{p_{\alpha\beta}}\Phi_{ \alpha \beta } $ is a probability measure. 
	
	We now construct the ODE system and its corresponding graph. First, let $ (i_\alpha)_{\alpha\in X} $ be states representing the entrance points of the compartments to be constructed. Next, we fix one compartment $ \alpha\in X $ and specify the states and rates inside the compartment $ \alpha'=\iota(\alpha)\subset \Omega' $. For any $ \beta\neq \alpha $ with $ p_{\alpha\beta}>0 $ introduce a state $ \zeta_{\alpha\beta} $. The rate to jump from $ i_\alpha $ to $ \zeta_{\alpha\beta} $ is given by $Mp_{\alpha\beta}$. 
	
	Now we add to the state $ \zeta_{\alpha\beta} $ the graph in Step 1, see Figure \ref{fig:proofapprox1}, with end point $ \omega:=i_\beta $. 

	\begin{figure}[ht]
		\centering
		\includegraphics[width=0.5\linewidth]{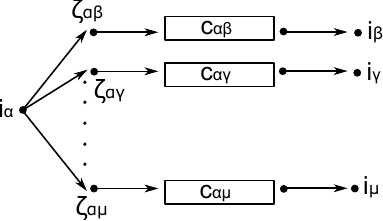}
		\caption{Graph structure in compartments in Step 2.
  Here with $\{C_{\alpha \beta } \}_{\alpha \neq\beta } $ we denote the graphs that connects $\zeta=\zeta_{\alpha \beta } $ to $\omega =i_\beta$ as in Figure \ref{fig:proofapprox1}.}
		\label{fig:proofapprox2}
	\end{figure}
	
	We can repeat this construction for any compartment and we obtain the state space $ \Omega' $, the partition $ X' $, the relabeling $ \iota: X\to X' $ and the matrix $ A $. By definition $ A $ satisfies statement (i) in the theorem. Also statements (ii) and (iii) are satisfied. 
	
	Finally, concerning (iv) we use Step 1 to deduce that for every $\alpha \neq  \beta$ the approximation \eqref{eq:ProofApproxKernels} holds for the measures $F_{\alpha \beta} $ with densities given by 
	\begin{align*}
		f_{\alpha\beta}(t)  = \sum_{j \in \alpha } \left(A_{\beta\alpha} \right)_{i_\beta, j}\left( e^{t A_{\alpha\alpha}} \right)_{j,i_\alpha}=\Psi_{\alpha\beta}(t). 
	\end{align*}
\end{proof}

\subsection{Response functions corresponding to ODEs satisfying the detailed balance condition}
\label{sec:detailed balance}
In this section we demonstrate that a particular structure of the biochemistry of the system imposes some restriction on the form of the response functions. 
Specifically we consider here systems satisfying the detailed balance condition.

We say that the ODE model \eqref{eq:IntrodModelODE} satisfies the detailed balance condition if there is $ \mu\in\R_+^{|\Omega|} $, $ \mu_j>0 $, $\sum_{i\in\Omega}\mu_i=1$ such that $ A\mu=\mu $ and $ A_{ij}\mu_j=A_{ji}\mu_i $ for all $ i,j\in \Omega $. The latter condition is equivalent to the fact that $ M^{-1}AM $ is a symmetric matrix, where
\begin{align*}
	M:=\operatorname{diag}\left( \{\sqrt{\mu_j}\}_{j\in \Omega} \right).
\end{align*}
Let us mention that the vector $\mu$ is usually referred to as the equilibrium distribution of the ODE system. We will prove, see Theorem \ref{thm:DetailedBalanceKernel}, that the corresponding response functions are completely monotone. 
	We recall that a completely monotone function $ f:\R_+\to \R $ is continuous on $ \R_+ $ and satisfies $ (-1)^nf^{(n)}(t)\geq0 $ for all $ n\in \N_0 $, $ t>0 $. An example of completely monotone function is the exponential function $e^{- \lambda t}$ with $\lambda \geq 0$ or the function $1/t $ for $t>0$.
 It is well-known that completely monotone functions are exactly the Laplace transforms of non-negative finite Borel measures on $ \R_+ $. See for instance \cite[Chapter XIII]{feller1967introduction}.
 
We have the following result.
\begin{theorem}\label{thm:DetailedBalanceKernel}
	Let \eqref{eq:IntrodModelODE} with $ A\in \R^{|\Omega|\times|\Omega|} $ satisfy detailed balance for $ \mu\in \R_+^{|\Omega|} $. Let $ X $ be a decomposition such that any compartment has at most one entrance point in $ (\Omega,\mathcal{E}) $. Then, we have for all $ \alpha,\, \beta\in X $, $ \alpha\neq \beta $ and some $ \kappa_j,\, \nu_j\geq0 $
	\begin{align*}
		\Phi_{ \beta \alpha }(t) = \lambda_{i_\beta,i_\alpha} \sum_{j \in \beta} \kappa_j^2 e^{-\nu_j t}, \quad \sum_{j \in \beta } \kappa_j^2=1,
	\end{align*}
	whenever the entrance points $ i_\alpha\in \alpha $, $ i_\beta\in \beta $ exists, otherwise $ \Phi_{ \beta \alpha }\equiv0 $.
	
	In particular, $ \Phi_{ \beta \alpha } $ is completely monotone.
\end{theorem}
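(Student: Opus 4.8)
The plan is to start from the explicit formula for the kernel already derived. By the computation in the remark following Lemma \ref{lem: RE one entrance and general} (see also \eqref{eq:ApproxKernelsOneEntranceFormula}), whenever the entrance points $i_\alpha\in\alpha$, $i_\beta\in\beta$ exist one has
\[
\Phi_{\beta\alpha}(t) = \left(A_{\alpha\beta}e^{tA_{\beta\beta}}\right)_{i_\alpha,i_\beta} = \sum_{j\in\beta}\lambda_{j,i_\alpha}\left(e^{tA_{\beta\beta}}\right)_{j,i_\beta},
\]
so everything reduces to understanding the matrix exponential $e^{tA_{\beta\beta}}$ together with the rates $\lambda_{j,i_\alpha}$ governing the jumps out of $\beta$ into the entrance point $i_\alpha$ of $\alpha$.

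The first key step, which I expect to be the crux, is to show that detailed balance together with the single-entrance-point assumption forces $\lambda_{j,i_\alpha}=0$ for every $j\in\beta$ with $j\neq i_\beta$. Indeed, since $\beta$ has at most one entrance point, no state $j\in\beta$ with $j\neq i_\beta$ receives an edge from outside $\beta$ (Definition \ref{def:EntrancePointGraph}); in particular $\lambda_{i_\alpha,j}=0$. Detailed balance in the form $\mu_j\lambda_{j,i_\alpha}=\mu_{i_\alpha}\lambda_{i_\alpha,j}$, together with $\mu_j>0$, then gives $\lambda_{j,i_\alpha}=0$. Hence only the diagonal term survives and $\Phi_{\beta\alpha}(t)=\lambda_{i_\beta,i_\alpha}\left(e^{tA_{\beta\beta}}\right)_{i_\beta,i_\beta}$.

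Second, I would symmetrize the block $A_{\beta\beta}$. Writing $M_\beta=\operatorname{diag}(\sqrt{\mu_j})_{j\in\beta}$, detailed balance restricted to $\beta$, i.e. $\mu_j(A_{\beta\beta})_{ij}=\mu_i(A_{\beta\beta})_{ji}$ (which holds on the off-diagonal by the same relation used above and trivially on the diagonal), shows exactly as for the full matrix that $S_\beta:=M_\beta^{-1}A_{\beta\beta}M_\beta$ is symmetric. Since $M_\beta$ is diagonal, the conjugation leaves the diagonal entries of the exponential unchanged, so $(e^{tA_{\beta\beta}})_{i_\beta,i_\beta}=(e^{tS_\beta})_{i_\beta,i_\beta}$. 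Diagonalizing $S_\beta=\sum_j(-\nu_j)v_jv_j^\top$ in an orthonormal eigenbasis $\{v_j\}$ and setting $\kappa_j:=(v_j)_{i_\beta}$ yields $(e^{tS_\beta})_{i_\beta,i_\beta}=\sum_{j\in\beta}\kappa_j^2e^{-\nu_jt}$, while completeness of the eigenbasis gives $\sum_j\kappa_j^2=\big(\sum_jv_jv_j^\top\big)_{i_\beta,i_\beta}=1$. This is precisely the claimed form.

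Finally, to conclude complete monotonicity I must verify $\nu_j\geq0$. This follows because $A_{\beta\beta}$ is a sub-generator: its off-diagonal entries are non-negative and its column sums satisfy $\textbf{e}_\beta^\top A_{\beta\beta}=-\textbf{e}_\beta^\top C_\beta\leq0$ (using $\textbf{e}_\beta^\top E_{\beta\beta}=0$), so a Gershgorin argument on columns shows every eigenvalue has non-positive real part; since $S_\beta$ is symmetric these eigenvalues are real, giving $-\nu_j\leq0$. Then $\Phi_{\beta\alpha}(t)=\lambda_{i_\beta,i_\alpha}\sum_j\kappa_j^2e^{-\nu_jt}$ is a non-negative linear combination of the completely monotone functions $e^{-\nu_jt}$, hence completely monotone (equivalently, the Laplace transform of the non-negative measure $\lambda_{i_\beta,i_\alpha}\sum_j\kappa_j^2\delta_{\nu_j}$). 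I expect the symmetrization and spectral decomposition to be routine; the genuine insight is the vanishing $\lambda_{j,i_\alpha}=0$ for $j\neq i_\beta$, since without it the coefficients of the exponentials need not be non-negative and both the single prefactor $\lambda_{i_\beta,i_\alpha}$ and complete monotonicity would fail.
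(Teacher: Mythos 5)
Your proof is correct and follows essentially the same route as the paper's: reduce to $\Phi_{\beta\alpha}(t)=\lambda_{i_\beta,i_\alpha}\left(e^{tA_{\beta\beta}}\right)_{i_\beta,i_\beta}$ via the single-entrance-point plus detailed-balance vanishing argument, symmetrize $A_{\beta\beta}$ by conjugation with $\operatorname{diag}(\sqrt{\mu_j})$, and expand the diagonal entry in an orthonormal eigenbasis. The only difference is that you spell out two steps the paper compresses into single sentences, namely the vanishing $\lambda_{j,i_\alpha}=0$ for $j\neq i_\beta$ and the non-positivity of the eigenvalues (your column-Gershgorin argument replaces the paper's appeal to $A_{\beta\beta}$ being a conservative generator minus a loss term), which only makes the argument more self-contained.
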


	Theorem \ref{thm:DetailedBalanceKernel} implies that not all the sets of kernels $ \{\Phi_{ \beta \alpha }\} $ can be approximated by those appearing in an ODE model \eqref{eq:IntrodModelODE} with detailed balance. This is in contrast with Theorem \ref{thm:ApproximationIntegralKernel} and implies a very strong restriction on the integral kernels that can be approximated by models satisfying the detailed balance condition.
\begin{proof}[Proof of Theorem \ref{thm:DetailedBalanceKernel}]
	Let $ \alpha,\, \beta\in X $ with entrance points $ i_\alpha,\, i_\beta $, respectively. By Theorem \ref{thm:ReductionODEtoRE} and Lemma \ref{lem: RE one entrance and general} we have 
	\begin{align*}
		\Phi_{\beta\alpha}(t)= \left( A_{\alpha\beta} e^{A_{\beta\beta}t}\right)_{i_\alpha,i_\beta} = \lambda_{i_\beta,i_\alpha} \left( e^{A_{\beta\beta}t}\right)_{i_\beta,i_\beta}.
	\end{align*}
	Here, we used that $ A_{\beta\alpha} $ has only one non-zero element due to the detailed balance condition and the assumption that there is at most one entrance point in $\alpha, \, \beta$. Let us define $ M^\beta:= \operatorname{diag}(\{\sqrt{\mu_j}\}_{j\in\beta}) $. Using the decomposition $ A_{\beta\beta}=E_{\beta\beta}-C_{\beta} $ as in Section \ref{sec:DecompGeneralSyst} we obtain that $ D_{\beta\beta}:=(M^\beta)^{-1}A_{\beta\beta}M^\beta $ is symmetric. Recall that $E_{\beta\beta}$ defines a conservative ODE system, while $-C_{\beta}$ is an additional loss term in $ A_{\beta\beta}=E_{\beta\beta}-C_{\beta} $. Thus, the eigenvalues $ \{\nu_j\}_{j\in\beta} $ of $D_{\beta\beta}$ are non-positive. Using an orthonormal basis of eigenvectors we deduce that there is an orthogonal matrix $ Q $ such that
	\begin{align*}
		\Phi_{\beta\alpha}(t) = \lambda_{i_\beta,i_\alpha} \left( Q\operatorname{diag}\left( \{e^{-\nu_jt}\}_{j\in\beta} \right)  Q^\top\right)_{i_\beta,i_\beta} = \lambda_{i_\beta,i_\alpha} \sum_{j \in \beta} Q_{j,i_\beta}^2e^{-\nu_jt}.
	\end{align*}	
	Since $ Q $ is orthogonal we have $ \sum_{j \in \beta }Q_{j,i_\beta}^2 =1 $. Defining $ \kappa_j:=Q_{j,i_\beta} $ concludes the proof.
\end{proof}

\section{Characterization of response functions yielding Markovian dynamics}\label{subsec:CharMarkov}
As anticipated in the introduction, one important property that is lost when reformulating the system of ODEs \eqref{eq:DecompModelODE} using the response function formalism \eqref{eq:Concentration}-\eqref{eq:REInflux}-\eqref{eq:REOutflux}, is the Markovianity of the evolution of the number of individuals $ \{N_\alpha \} $.

In this section, we give a precise definition of Markovianity and characterize the integral kernels yielding a Markovian dynamics. To this end, we restrict ourselves to decompositions $X$ of the state $\Omega $ into compartments with at most one entrance point. In other words, we restrict our attention to RFEs of the form \eqref{eq:IntroConcentrationOneEntrance}-\eqref{eq:IntroRE one entrance}-\eqref{eq:IntroConcentrationOneOutgoing} with forcing function satisfying particular conditions (see \eqref{forcing function markovianity}).

As we will see, kernels inducing a Markovian dynamics are exactly given by exponentials, and they correspond to Markov jump processes involving exponentially distributed waiting times, see Theorem \ref{thm:CharMarkovRE} below. Similar results, for some type of non-linear structured population models can be found in  \cite{diekmann2020finite}. Here we present a simple proof for the linear renewal equations considered in this paper.

\begin{definition}[Markovianity]\label{def:markov}
We say that the evolution induced by \eqref{eq:IntroConcentrationOneEntrance}-\eqref{eq:IntroRE one entrance}-\eqref{eq:IntroConcentrationOneOutgoing} is Markovian if and only if any solution $N$ of equation \eqref{eq:IntroConcentrationOneEntrance} can be written in the form
\begin{equation}
\label{Markov ODE} 
 N (t)= e^{t A} N^0 
\end{equation} 
for all $N^0\in \R^{|X|}$. Furthermore, the matrix $A \in \mathbb R^{|X|\times |X|} $ is independent of $N^0 \in \mathbb R_+^{|X|}$, satisfying $A_{ii} = - \sum_{i \neq k  } A_{ik} $ for every $i \in \{1, \dots, |X|\} $. 
\end{definition}

Since we apply the Laplace transform in the proof of Theorem \ref{thm:CharMarkovRE} below, the following lemma is useful. 

\begin{lemma}\label{lem:existence uniqueness and expo decay} 
    Assume that each compartment in $\alpha \in X$ has at most one entrance point. Let $z_0> 0 $ such that $ \{ k_\alpha\} ,\,  \{ \Phi_{\alpha \beta } \} , \, \{ D_\alpha^0 \} ,\{ B_\alpha^0  \} \in L^1(\mathbb R_+, e^{- z_0 t} dt )$ for every $\alpha ,\,  \beta \in X$.
    
    Then, there exists a unique solution $ \{N_\alpha, B_\alpha, D_\alpha\} $ to \eqref{eq:IntroConcentrationOneEntrance}-\eqref{eq:IntroRE one entrance}-\eqref{eq:IntroConcentrationOneOutgoing} such that for every $\alpha \in X$ it holds that $B_\alpha,\,  D_\alpha \in L^1(\mathbb R_+, e^{- x_0 t} dt)$ for some $x_0 > z_0$ and $N_\alpha \in W^{1,1}(\mathbb R_+, \mathbb R_+ )$ satisfies
    \begin{equation} \label{bound N} 
        \int_0^\infty N_\alpha(t) e^{- x_0 t } dt < \infty. 
    \end{equation} 
\end{lemma}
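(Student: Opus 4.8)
The plan is to treat the renewal equation \eqref{eq:IntroRE one entrance} for the influxes $\{B_\alpha\}$ as the core of the system, since once $\{B_\alpha\}$ is known the outfluxes $\{D_\alpha\}$ and the concentrations $\{N_\alpha\}$ are fixed by the explicit formulas \eqref{eq:IntroConcentrationOneOutgoing} and \eqref{eq:IntroConcentrationOneEntrance}. To build the decay directly into the construction, I would pass to the exponentially weighted variables $\tilde B_\alpha(t):=e^{-x_0 t}B_\alpha(t)$, and likewise $\tilde\Phi_{\beta\alpha}:=e^{-x_0 t}\Phi_{\beta\alpha}$ and $\tilde B^0_\alpha:=e^{-x_0 t}B^0_\alpha$, for a parameter $x_0>z_0$ to be fixed. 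Since the integral term in \eqref{eq:IntroRE one entrance} is a convolution, multiplying through by $e^{-x_0 t}$ converts it into a convolution of the tilded functions, so $\{\tilde B_\alpha\}$ solves a renewal equation of exactly the same shape, but now with kernels $\tilde\Phi_{\beta\alpha}$ and forcing terms $\tilde B^0_\alpha$ that lie in the \emph{unweighted} space $L^1(\R_+)$: indeed $L^1(\R_+,e^{-z_0 t}\,dt)\subset L^1(\R_+,e^{-x_0 t}\,dt)$ because $x_0\ge z_0$.

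The decisive observation is that the shifted kernel norm $\|\tilde\Phi_{\beta\alpha}\|_{L^1}=\int_0^\infty e^{-x_0 t}|\Phi_{\beta\alpha}(t)|\,dt$ is dominated by $\int_0^\infty e^{-z_0 t}|\Phi_{\beta\alpha}(t)|\,dt$ and tends to $0$ as $x_0\to\infty$ by dominated convergence. As $X$ is finite, I can therefore choose $x_0>z_0$ so large that $\max_\beta\sum_{\alpha\neq\beta}\|\tilde\Phi_{\beta\alpha}\|_{L^1}<1$. With this choice the linear operator $T$ on $\left(L^1(\R_+)\right)^{|X|}$ defined by $(T\tilde B)_\alpha=\sum_{\beta\neq\alpha}\tilde\Phi_{\beta\alpha}*\tilde B_\beta$ is a contraction: Young's inequality gives $\|\tilde\Phi_{\beta\alpha}*\tilde B_\beta\|_{L^1}\le\|\tilde\Phi_{\beta\alpha}\|_{L^1}\|\tilde B_\beta\|_{L^1}$, and summing over $\alpha$ yields operator norm strictly below $1$ in the product norm. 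The Banach fixed point theorem then produces a unique $\{\tilde B_\alpha\}\in\left(L^1(\R_+)\right)^{|X|}$ solving $\tilde B=\tilde B^0+T\tilde B$; undoing the weight gives a unique $\{B_\alpha\}$ with $\int_0^\infty|B_\alpha(t)|e^{-x_0 t}\,dt<\infty$. This solution is in particular in $L^1_{\operatorname{loc}}$, hence coincides with the one supplied by Lemma \ref{lem:Wellposedness RFE}, which rules out any spurious solution in a larger class.

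With $\{B_\alpha\}$ in hand I would read off $D_\alpha$ from \eqref{eq:IntroConcentrationOneOutgoing}: in the tilded variables $\tilde D_\alpha=\tilde D^0_\alpha+\tilde k_\alpha*\tilde B_\alpha$, whose right-hand side lies in $L^1(\R_+)$ by the hypotheses on $k_\alpha,D^0_\alpha$ and Young's inequality, so $D_\alpha\in L^1(\R_+,e^{-x_0 t}\,dt)$. Finally $N_\alpha(t)=N^0_\alpha+\int_0^t\bigl(B_\alpha(s)-D_\alpha(s)\bigr)\,ds$ is locally absolutely continuous with $N_\alpha'=B_\alpha-D_\alpha$ a.e., hence $N_\alpha\in W^{1,1}_{\operatorname{loc}}$; and the weighted bound \eqref{bound N} follows by Fubini's theorem, since $\int_0^\infty e^{-x_0 t}\int_0^t|B_\alpha-D_\alpha|\,ds\,dt=\tfrac1{x_0}\int_0^\infty|B_\alpha(s)-D_\alpha(s)|e^{-x_0 s}\,ds<\infty$, together with $\int_0^\infty N^0_\alpha e^{-x_0 t}\,dt=N^0_\alpha/x_0$.

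I expect the only genuine obstacle to be organizational rather than analytic: making the contraction estimate uniform across the finitely many compartments so that a single $x_0$ serves the whole vector-valued system simultaneously, and verifying that the shift-then-contract solution agrees with the $L^1_{\operatorname{loc}}$ solution of Lemma \ref{lem:Wellposedness RFE} so that existence and uniqueness are consistent between the two classes. The non-negativity of $N_\alpha$ (should one insist on the codomain $\R_+$) is not delivered by this argument and would have to be imported from the positivity statement, cf. Lemma \ref{lem:PositivityGenRFE}, under the consistency conditions \eqref{consistency ID} and \eqref{kernel condition for conservation of numbers}.
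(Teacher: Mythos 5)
Your proposal is correct, and it is essentially a self-contained rendering of what the paper accomplishes by citation. The paper's own proof is three sentences long: it refers to Gripenberg--Londen--Staffans (Section 3.4 of the Volterra equations monograph cited as \cite{Gripenberg1990VolterraIntEq}) for the existence of $B_\alpha \in L^1(\mathbb R_+, e^{-x_0 t}dt)$ for some $x_0>z_0$, deduces the claim for $D_\alpha$ from the hypotheses on $k_\alpha$ and $D^0_\alpha$ (your Young's-inequality step), and then asserts that \eqref{bound N} ``follows by Gronwall's Theorem.'' Your exponential-shift-plus-contraction argument is precisely the standard proof of the weighted-resolvent result being cited, so the mathematical content coincides; what your version buys is independence from the reference, together with an explicit mechanism for choosing $x_0$ (dominated convergence drives the shifted kernel norms to zero, and finiteness of $X$ lets a single $x_0$ work for the whole system --- the ``organizational'' worry you flag is the right one, and taking the maximum over $\beta$ of $\sum_{\alpha\neq\beta}\|\tilde\Phi_{\beta\alpha}\|_{L^1}$ handles it). For the concentration bound, your Fubini computation is arguably cleaner than the paper's appeal to Gronwall: since $N_\alpha$ is an explicit integral of $B_\alpha-D_\alpha$, no differential inequality is needed. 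Two further points of comparison: (a) your identification of the fixed-point solution with the $L^1_{\operatorname{loc}}$ solution of Lemma \ref{lem:Wellposedness RFE} is a genuine addition --- it is what makes uniqueness in the class ``$L^1(\mathbb R_+,e^{-x_0t}dt)$ for \emph{some} $x_0>z_0$'' meaningful, a point the paper's proof glosses over; (b) your caveat about non-negativity of $N_\alpha$ is fair, since the statement writes $W^{1,1}(\mathbb R_+,\mathbb R_+)$ but the paper's proof likewise does not address positivity, which indeed requires the additional hypotheses of Lemma \ref{lem:PositivityGenRFE}.
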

\begin{proof}
We refer to \cite[Section 3.4]{Gripenberg1990VolterraIntEq} for the existence of $B_\alpha  \in L^1(\mathbb R_+, e^{- x_0 t} dt)$ for some $x_0 > z_0 $. Thanks to the assumptions on $k_\alpha $ and $D^0_\alpha $ we deduce that $D_\alpha  \in L^1(\mathbb R_+, e^{- x_0 t} dt)$. Hence \eqref{bound N} follows by Gronwall's Theorem. 
\end{proof}

\begin{theorem}[Markovianity for RFEs with at most one entrance point] \label{thm:CharMarkovRE}
    Under the conditions of Lemma \ref{lem:existence uniqueness and expo decay}, assume moreover that $ \{ k_\alpha \}  $, $\{ \Phi_{\alpha \beta } \}  $ satisfy \eqref{kernel condition for conservation of numbers} and that for every $\alpha \in X $ either equality \eqref{eq:IntodKernelConservation} or \eqref{eq:ApproxKernelsOneEntranceConservationzeroIntro} holds.
    Let $\{N_\alpha, B_\alpha , D_\alpha\}$ be the solution to \eqref{eq:IntroConcentrationOneEntrance}-\eqref{eq:IntroRE one entrance}-\eqref{eq:IntroConcentrationOneOutgoing} with
    \begin{equation} \label{forcing function markovianity}
        D_\alpha^0 (t)=k_\alpha(t) N^0_\alpha, \quad B_\alpha^0(t)= \sum_{\beta \in X \setminus \alpha } \Phi_{\alpha \beta }(t) N_\beta^0
    \end{equation}
    for every $t \geq 0$ and $N^0\in \mathbb R^{|X|}$. 
      Then, the evolution of $ \{N_\alpha\} $ is Markovian, in the sense of Definition \ref{def:markov}, if and only if for all $t \geq 0$
    \begin{equation}\label{Phi Markovian} 
        \Phi_{\alpha \beta } (t)= \lambda_{ \alpha \beta} e^{- t \sum_{\beta \in X \setminus \{ \alpha\} } \lambda_{\alpha \beta }   }
    \end{equation} 
    for some $\lambda_{\alpha \beta } \in \mathbb R_+ $.
\end{theorem}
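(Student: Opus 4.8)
The plan is to take Laplace transforms and convert the convolution system \eqref{eq:IntroConcentrationOneEntrance}-\eqref{eq:IntroRE one entrance}-\eqref{eq:IntroConcentrationOneOutgoing} into a matrix identity in the transform variable $z$. By Lemma \ref{lem:existence uniqueness and expo decay} the solution $\{N_\alpha,B_\alpha,D_\alpha\}$ exists and decays exponentially, so the transforms $\hat N_\alpha,\hat B_\alpha,\hat D_\alpha,\hat\Phi_{\alpha\beta},\hat k_\alpha$ are analytic on a half-plane $\{\Re z>x_0\}$. Setting $N^0=(N_\alpha^0)_{\alpha\in X}$ and introducing the matrix $P(z)$ with $P(z)_{\alpha\beta}=\hat\Phi_{\beta\alpha}(z)$ for $\alpha\neq\beta$ and $P(z)_{\alpha\alpha}=0$, together with the diagonal matrix $D(z)=\operatorname{diag}(\hat k_\alpha(z))$, equations \eqref{eq:IntroRE one entrance}, \eqref{eq:IntroConcentrationOneOutgoing}, \eqref{eq:IntroConcentrationOneEntrance} transform to $\hat B=\hat B^0+P(z)\hat B$, $\hat D=\hat D^0+D(z)\hat B$, and $z\hat N-N^0=\hat B-\hat D$. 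For $\Re z$ large one has $\|P(z)\|<1$, so $I-P(z)$ is invertible, its inverse being the Laplace transform of the resolvent from Lemma \ref{lem:Wellposedness RFE}.

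The crucial simplification comes from the forcing \eqref{forcing function markovianity}: a direct computation gives $\hat B^0(z)=P(z)N^0$ and $\hat D^0(z)=D(z)N^0$, so that $N^0+\hat B(z)=(I-P(z))^{-1}N^0$ and
\begin{equation*}
  z\,\hat N(z)=(I-D(z))\,(I-P(z))^{-1}\,N^0 .
\end{equation*}
Since this holds for every $N^0$, it is an identity of matrices. By Definition \ref{def:markov} the evolution is Markovian exactly when $\hat N(z)=(zI-A)^{-1}N^0$ for a fixed generator $A$, i.e. exactly when
\begin{equation*}
  z\,(zI-A)^{-1}=(I-D(z))\,(I-P(z))^{-1}.
\end{equation*}
The easy implication follows by inserting \eqref{Phi Markovian}: with $\ell_\alpha:=\sum_{\gamma\neq\alpha}\lambda_{\alpha\gamma}$ one has $\hat\Phi_{\alpha\beta}(z)=\lambda_{\alpha\beta}/(z+\ell_\alpha)$, hence $I-D(z)=z(zI+L)^{-1}$ and $P(z)=\Lambda^\top(zI+L)^{-1}$ with $L=\operatorname{diag}(\ell_\alpha)$ and $\Lambda=(\lambda_{\alpha\beta})$; the right-hand side collapses to $z\,(zI-(\Lambda^\top-L))^{-1}N^0$, giving $N(t)=e^{tA}N^0$ with $A=\Lambda^\top-L$ a conservative generator.

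For the converse I would read the identity entrywise. Rearranging to $I-P(z)=\tfrac1z(zI-A)(I-D(z))$ and using that $I-D(z)$ is diagonal, the off-diagonal entries give, after relabeling, for $\alpha\neq\beta$
\begin{equation*}
  \hat\Phi_{\alpha\beta}(z)=\frac{A_{\beta\alpha}}{z}\,(1-\hat k_\alpha(z)),
\end{equation*}
while the diagonal entries are compatible with the zero-sum constraint imposed on $A$ in Definition \ref{def:markov}. Summing over $\beta\neq\alpha$ and using $k_\alpha=\sum_{\beta\neq\alpha}\Phi_{\alpha\beta}$ from \eqref{kernel condition for conservation of numbers} yields the scalar relation $\hat k_\alpha=c_\alpha(1-\hat k_\alpha)/z$ with $c_\alpha:=\sum_{\beta\neq\alpha}A_{\beta\alpha}$, whose solution is $\hat k_\alpha(z)=c_\alpha/(z+c_\alpha)$. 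Substituting back gives $\hat\Phi_{\alpha\beta}(z)=A_{\beta\alpha}/(z+c_\alpha)$, and inverting the Laplace transform yields $\Phi_{\alpha\beta}(t)=\lambda_{\alpha\beta}e^{-c_\alpha t}$ with $\lambda_{\alpha\beta}:=A_{\beta\alpha}\ge0$ and $c_\alpha=\sum_{\gamma\neq\alpha}\lambda_{\alpha\gamma}$, which is exactly \eqref{Phi Markovian}. The degenerate case $c_\alpha=0$ reproduces the sink case \eqref{eq:ApproxKernelsOneEntranceConservationzeroIntro}, while $\hat k_\alpha(0)=1$ recovers \eqref{eq:IntodKernelConservation}.

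The main technical obstacle I anticipate is the analytic bookkeeping: one must fix a region of $z$ on which $I-P(z)$, $I-D(z)$ and $zI-A$ are simultaneously invertible so that the matrix identity and its rearrangement are legitimate, and then promote the rational formulas for $\hat k_\alpha$ and $\hat\Phi_{\alpha\beta}$ from that region to the whole half-plane of convergence by the identity theorem before inverting the transform, using its injectivity. Beyond this, the argument is the elementary linear algebra above; all the content sits in the collapse of the forcing term in \eqref{forcing function markovianity}, which is precisely what turns Markovianity into a pointwise-in-$z$ condition on the kernels.
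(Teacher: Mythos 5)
Your proposal is correct and follows essentially the same route as the paper's proof: Laplace transform, collapse of the forcing terms via \eqref{forcing function markovianity} into $\hat{B}^0(z)=M(z)N^0$, $\hat{D}^0(z)=Q(z)N^0$, the resulting matrix identity $z\hat{N}(z)=\left(I-Q(z)\right)\left(I-M(z)\right)^{-1}N^0$, and entrywise identification of the kernels on a region where all inversions are legitimate, followed by uniqueness of the Laplace transform. The only differences are cosmetic: your $P(z),D(z)$ are the paper's $M(z),Q(z)$; in the forward direction you pin down $\hat{k}_\alpha$ by summing the off-diagonal relations and using \eqref{kernel condition for conservation of numbers}, whereas the paper solves the diagonal relation and then invokes the zero-sum constraint on $A$ from Definition \ref{def:markov} (the two are equivalent); and your converse is the paper's Step 2 written explicitly as $A=\Lambda^\top-L$.
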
 
\begin{proof}
    \textit{Step 1.} We first assume that the dynamics are Markovian and deduce \eqref{Phi Markovian}. Let $\Delta:=\left\{ z \in \mathbb C :  \Re z > x_0 \right\} $. 
    We apply the Laplace transform to each term in \eqref{eq:IntroRE one entrance}-\eqref{eq:IntroConcentrationOneOutgoing} yielding
    \begin{align}
        \hat{D}(z) &= \hat{D}^0(z) + Q(z) \hat{B} (z)\label{eq:laplace D}
        \\
        \hat{B}(z) &= \hat{B}^0(z) + M(z) \hat{B} (z)\label{eq:laplace B}
    \end{align}
     for $z \in \Delta$. 
    Here, $\hat{D}(z) \in \mathbb R_+^{|X|} $ is the vector whose $\alpha$-th element is the Laplace transform of $D_\alpha $ and we define $\hat{D}^0(z),\,  \hat{B}^0(z),\, \hat{B}(z)  \in \mathbb R_+^{|X|}$ analogously. The matrix $Q(z)$ is diagonal with $(Q(z))_{\alpha \alpha }=\hat{k}_\alpha(z)$ and $(M(z))_{\alpha \beta }= \hat{\Phi}_{\beta \alpha }(z) $. 

    Similarly, we obtain from \eqref{eq:IntroConcentrationOneEntrance} that 
    \begin{equation} \label{eq:laplace N} 
        z\hat{N} (z) = N^0+  \hat{B} (z) - \hat{D} (z), 
    \end{equation} 
     for $z \in \Delta$.
    Notice that $z \mapsto (I - M(z))^{-1}$ is a meromorphic map on $\Delta$. We denote by $P $ the set of poles in $\Delta$. From equation \eqref{eq:laplace B} we deduce that for $z \in \Delta \setminus P $ it holds that
    \begin{equation}\label{eq: Proof Lap B function of B_0}
        \hat{B}(z)=\left( I - M(z) \right)^{-1}\hat{B}^0(z).
    \end{equation}
    Substituting this in \eqref{eq:laplace N} and using \eqref{eq:laplace D} we obtain 
    \begin{align}\label{eq:N hat 1}
        z \hat{N} (z) = N^0+ \left( I-Q(z) \right) \left( I - M(z) \right)^{-1}\hat{B}^0(z)- \hat{D}^0(z)
    \end{align} 
    for $z \in \Delta \setminus P $. 
    By assumption there exists a matrix $A$ with $\frac{d N}{ dt} = AN$. Since $N$ satisfies \eqref{bound N} the spectral radius of the matrix $A$ is smaller or equal than $x_0$. Applying the Laplace transform yields 
    \begin{equation} \label{eq:N hat 2}
        \hat{N}(z)= (z I- A )^{-1} N^0.
    \end{equation}
    for $z\in \Delta$. 
    Thus, combining \eqref{eq:N hat 1}, \eqref{eq:N hat 2} and $\hat{B}^0(z) = M(z) N^0$, $\hat{D}^0(z) = Q(z) N^0$ we obtain
    \begin{align*}
        A (zI - A)^{-1} N^0 = \left(I - Q(z) \right) \left( I - M(z) \right)^{-1}M (z) N^0- Q(z) N^0
    \end{align*}
    for $z\in \Delta \setminus P$. Since this holds for any $N^0\in \R^{|X|} $ this implies
    \begin{align*}
        A (z I - A)^{-1} = \left(I - Q(z) \right) \left( I - M(z) \right)^{-1}M (z) - Q(z) = - I +  \left(I - Q(z) \right) \left( I - M(z) \right)^{-1}.
    \end{align*} 
    After rearranging we obtain 
    \begin{align}\label{eq: M Q}
        z M(z) = z Q(z)+ A(I - Q(z))
    \end{align}
    for $z \in \Delta \setminus P$. 
    Now recall that $Q(z) $ is diagonal with $Q_{\alpha \alpha}(z)= \hat{k}_\alpha(z)$ and $(M(z))_{\alpha \alpha } =0$. Thus, \eqref{eq: M Q} yields for $z\in \Delta \setminus P$
    \begin{equation} \label{lap transf Q}
        z\hat{k}_{ \alpha }(z) + \lambda_{\alpha \alpha }(1- \hat{k}_\alpha(z)) =0
    \end{equation}
    where $\lambda_{\alpha \alpha}:= A_{\alpha \alpha} $. When instead $\alpha \neq \beta $ then
    \begin{equation} \label{lap transf M}
        z \hat{\Phi}_{\alpha \beta }(z)= \lambda_{ \alpha \beta} (1- \hat{k}_\alpha (z) )
    \end{equation} 
    for $ z\in \Delta \setminus P$, where $\lambda_{\alpha \beta } := A_{\beta \alpha }$. 
    Thus, \eqref{lap transf Q} and \eqref{lap transf M} yield 
    \begin{equation} \label{laplace Phialphabeta} 
        z \hat{\Phi}_{\alpha \beta} (z)= \lambda_{ \alpha \beta} \left( 1+ \frac{\lambda_{\alpha \alpha }}{z- \lambda_{\alpha \alpha}} \right)= z\frac{\lambda_{\alpha \beta }}{z-\lambda_{\alpha \alpha }}
    \end{equation}
    for $z \in \Delta \setminus P$. 
    We deduce that $ \Phi_{\alpha \beta }(t)= \lambda_{ \alpha \beta} e^{t  \lambda_{\alpha \alpha}}$ and that $ k_{\alpha } (t)= - \lambda_{\alpha \alpha} e^{t \lambda_{\alpha \alpha}} $. Then, \eqref{Phi Markovian} follows from $\lambda_{\alpha \alpha } = - \sum_{\beta \in X \setminus \{\alpha\} } \lambda_{\alpha \beta }$. 

    \textit{Step 2.} Assume now that $\Phi $ is given by \eqref{Phi Markovian}. Then we have $\hat{\Phi}_{\alpha \beta } (z)= \lambda_{\alpha \beta }/(z-\lambda_{\alpha \alpha })$ and $\hat{k}_\alpha (z)=-\lambda_{\alpha \alpha }/(z-\lambda_{\alpha \alpha })$ for $z>0$, where $\lambda_{\alpha \alpha } = - \sum_{\beta \in X \setminus \{\alpha\} } \lambda_{\beta \alpha } $. Let us define the matrix $ M(z) $ via $M_{\alpha \alpha}(z)=0$, $M_{\alpha \beta}(z)=\hat{\Phi}_{\beta \alpha } (z)= \lambda_{\beta\alpha }/(z-\lambda_{\beta \beta })$ and the diagonal matrix $Q(z)$ via $Q_{\alpha \alpha } (z) =\hat{k}_\alpha (z)= - \lambda_{\alpha \alpha }/(z-\lambda_{\alpha \alpha })$. As in Step 1, we apply the Laplace transform to \eqref{eq:IntroConcentrationOneEntrance}-\eqref{eq:IntroRE one entrance}-\eqref{eq:IntroConcentrationOneOutgoing} and use \eqref{eq: Proof Lap B function of B_0}, yielding for $z>0$
    \begin{align*}
        z \hat{N}(z) = N^0+(I-Q(z)) (I-M(z))^{-1} M(z) N^0 - Q(z) N^0 = (I-Q(z))(1-M(z))^{-1}N^0.
    \end{align*}
    Thus, the matrix $A(z):= z-z(I-M(z))(I-Q(z))^{-1} $ satisfies $\hat{N}(z) =( z I -A)^{-1} N^0$. To conclude, notice that $A_{\alpha \beta  } (z)= \lambda_{\beta \alpha} $ when $\alpha \neq \beta $ while $A_{\alpha \alpha } (z)= \lambda_{\alpha \alpha} $. In particular, $A$ is independent of $z$. Thus, the requirements of Definition \ref{def:markov} are satisfied.
\end{proof}

\section{Long-time behaviour of the solution of the classical RFEs}\label{sec:long-time}
In this section we study the long-time behaviour of equations of the form  \eqref{eq:IntroConcentrationOneEntrance}, \eqref{eq:IntroRE one entrance},\eqref{eq:IntroConcentrationOneOutgoing}. 
Once the asymptotic behaviour of $B(t) $ is known, the one of the vector $D(t)$ follows from the equality \eqref{eq:IntroConcentrationOneOutgoing}. 
Similarly the long-time behaviour of $N$, satisfying \eqref{eq:IntroConcentrationOneEntrance}, can be obtained starting from the asymptotics of $B$ and $D$.

To study the long-time behaviour of $B(t)$ we will need to make an additional assumption on the kernels $\Phi$.
More precisely we assume that the matrix $\Phi$ such that $(\Phi)_{\alpha \beta } = \Phi_{\beta \alpha}$ is irreducible. 
This corresponds to assuming that every compartment $\alpha $ can be reached by any other compartment $\beta $. In other words the graph having as vertexes the compartments must be strongly connected. 
This assumption guarantees the existence of an invariant measure, which is proven here to be stable. The irreducibility assumption of $\Phi$ corresponds to the irreducibility property which guarantees the existence of and convergence to an invariant measure for Markov chains \cite[Chapter VIII]{feller1967introduction}.

To analyse the long-time behaviour of \eqref{eq:IntroRE one entrance} we will use Laplace transforms. In particular, we follow the approach in \cite{diekmann2012delay}. 
 In order to apply Laplace transforms we make the same assumptions as in Lemma \ref{lem:existence uniqueness and expo decay} on the forcing function $B^0$ and on the kernel $\Phi$. 
We also use the same notation as in the proof of Lemma \ref{lem:existence uniqueness and expo decay} for $M(z), \Delta, P$.  
Namely $\Delta $ is the subset of the complex plain on which the Laplace transform is defined, $(M(z))_{\alpha \beta } = \hat{\Phi}_{\beta \alpha }(z) $ for $z \in \Delta $ and $P $ is the set of the poles of the map $z \mapsto (I - M(z))^{-1}$, which is meromorphic on $\Delta$. 
For $z \in \Delta \setminus P $ we have, cf. \eqref{eq: Proof Lap B function of B_0},
    \begin{equation}\label{eq: Lap B function of B_0}
        \hat{B}(z)=\left( I - M(z) \right)^{-1}\hat{B}^0(z).
    \end{equation}
  The first step for analysing the long-time behaviour of $B$ consist in finding a solution $(z, v) $, with $ r \in \mathbb R$ and $v \in \mathbb R_+^{|X|}$, to the  non-linear eigenproblem
    \begin{equation} \label{eq:non linear eigenproblem}
    v= M(z) v. 
    \end{equation}

    \begin{lemma} \label{lem:eigensolution}
        Let the assumptions of Lemma \ref{lem:existence uniqueness and expo decay} hold. 
        Moreover, assume that for every $z \in \mathbb R$ the matrix $M(z)$ is irreducible, i.e. $(M(z))_{ij} \neq 0$ for every $i \neq j $.
        Then there exists a unique real eigen-couple $(z, v)=(0, v_0 ) $ satisfying equation \eqref{eq:non linear eigenproblem} with $v_0 \in \mathbb R_+^{|X|} $.
    \end{lemma}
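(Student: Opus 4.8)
The plan is to study the nonlinear eigenproblem $v = M(z)v$ via the spectral radius of the non-negative matrix $M(z)$ as a function of the real parameter $z$, exploiting Perron--Frobenius theory thanks to the irreducibility assumption. First I would observe that for each fixed $z\in\R$, the matrix $M(z)$ has non-negative entries (since $\Phi_{\beta\alpha}\geq 0$ implies $\hat\Phi_{\beta\alpha}(z)\geq 0$ for real $z$) and is irreducible by hypothesis, so by the Perron--Frobenius theorem it has a simple dominant eigenvalue $\rho(z)>0$ equal to its spectral radius, with a strictly positive eigenvector $v(z)\in\R_*^{|X|}$, unique up to scaling. A pair $(z,v)$ solves \eqref{eq:non linear eigenproblem} with $v$ having non-negative (indeed positive) entries precisely when $\rho(z)=1$; any non-negative eigenvector of an irreducible non-negative matrix associated to an eigenvalue must in fact be the Perron eigenvector with eigenvalue $\rho(z)$. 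So the problem reduces to showing that $\rho(z)=1$ has the unique real solution $z=0$.

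Next I would pin down the value at $z=0$. Using the conservation assumption \eqref{eq:IntodKernelConservation}, namely $\sum_{\beta\in X\setminus\{\alpha\}}\int_0^\infty \Phi_{\alpha\beta}(t)\,dt=1$, we get that the column sums of $M(0)$ satisfy $\sum_{\alpha}(M(0))_{\alpha\beta}=\sum_{\alpha\neq\beta}\hat\Phi_{\beta\alpha}(0)=\sum_{\alpha\neq\beta}\int_0^\infty\Phi_{\beta\alpha}(t)\,dt=1$. Hence $M(0)$ is column-stochastic, so $\textbf{e}_{|X|}^\top M(0)=\textbf{e}_{|X|}^\top$, which shows $1$ is an eigenvalue of $M(0)^\top$ and therefore of $M(0)$. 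Since $M(0)$ is irreducible and column-stochastic, its spectral radius is exactly $1$, so $\rho(0)=1$ and the Perron eigenvector $v_0\in\R_+^{|X|}$ furnishes the eigen-couple $(0,v_0)$.

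For uniqueness I would prove that $z\mapsto\rho(z)$ is strictly monotone, so that $\rho(z)=1$ cannot hold for any $z\neq 0$. The key monotonicity input is that each entry $\hat\Phi_{\beta\alpha}(z)=\int_0^\infty e^{-zt}\Phi_{\beta\alpha}(t)\,dt$ is strictly decreasing in real $z$ wherever $\Phi_{\beta\alpha}\not\equiv 0$ (differentiating under the integral gives $\frac{d}{dz}\hat\Phi_{\beta\alpha}(z)=-\int_0^\infty t\,e^{-zt}\Phi_{\beta\alpha}(t)\,dt<0$). Thus for $z_1<z_2$ we have $M(z_1)\geq M(z_2)$ entrywise with strict inequality in the off-diagonal entries, and the classical monotonicity of the Perron root under entrywise domination of irreducible non-negative matrices (if $0\leq M_2\leq M_1$ with $M_1$ irreducible and $M_2\neq M_1$, then $\rho(M_2)<\rho(M_1)$) gives $\rho(z_2)<\rho(z_1)$. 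Strict monotonicity of $\rho$ then forces $z=0$ to be the only real solution of $\rho(z)=1$, and correspondingly $v=v_0$ up to normalization, which is the claimed uniqueness.

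The main obstacle I expect is establishing the strict monotonicity of the Perron root $\rho(z)$ rigorously and handling the degenerate possibility that some $\Phi_{\beta\alpha}\equiv 0$ so that the corresponding entry of $M(z)$ is identically zero and contributes no strict decrease. The irreducibility hypothesis is exactly what rules this out: it guarantees every off-diagonal entry $(M(z))_{ij}=\hat\Phi_{ji}(z)$ is nonzero, hence strictly decreasing, so the strict comparison $M(z_1)>M(z_2)$ off the diagonal genuinely holds and the Perron-root monotonicity theorem applies. A secondary technical point is confirming that $M(z)$ is well-defined and finite for all real $z$ in the relevant range; this follows from the integrability assumptions inherited from Lemma \ref{lem:existence uniqueness and expo decay}, though one should be slightly careful that the domain on which all Laplace transforms converge is an interval containing $0$, which is where the comparison argument is carried out.
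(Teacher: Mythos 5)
Your proposal is correct and follows essentially the same route as the paper: Perron--Frobenius theory for the irreducible non-negative matrix $M(z)$, the conservation condition making $\textbf{e}_{|X|}^\top$ a left eigenvector of $M(0)$ so that $\rho(M(0))=1$, and strict monotonicity of $z\mapsto\rho(M(z))$ to exclude any other real eigen-couple. If anything, your justification at $z=0$ (spectral radius of an irreducible column-stochastic matrix equals $1$) is cleaner than the paper's bound $\rho(M(0))\leq\norm{M(0)}\leq 1$, which is questionable for the Euclidean-induced norm, and your explicit appeal to the Perron-root comparison theorem supplies the strictness that the paper's Gelfand-formula argument leaves terse.
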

\begin{proof}
We recall that by the Perron-Frobenius theorem for irreducible matrices, see \cite{carl2000matrix}, for every $z \in \mathbb R $ the spectral radius $\rho(M(z))$ of the matrix $M(z) $ is a simple eigenvalue of $M(z) $. Moreover, the corresponding eigenvector $v_r $ has positive entries $v_r \in \mathbb R_+^{|X|}$. 
 
By the definition of $M(z) $, $\textbf{e}_{|X|}^T$ is a left-eigenvalue of $M(0)$, hence the matrix $M(0)$ has only one eigenvalue $\lambda=1 $, corresponding to the normalized eigenvector $v_0$. Since on the other hand $\rho(M(0))\leq \norm{M(0)}\leq 1 $, the Perron-Frobenius theorem implies that $\rho(M(0))=1 $. 

Moreover, the function $\mathbb R \rightarrow \mathbb R:z \mapsto \rho(M(z)) $ is strictly decreasing. Indeed, recall that $\rho(M(z)) = \lim_{k \rightarrow \infty} \| M(z)^k\|^{1/k}$, where $\|\cdot \|$ is the matrix norm induced by the standard euclidean norm. Since all the entries of the matrix $ M(z) $ are strictly decreasing as a function of $z\in\R$, which implies the claim.

Thus, we infer $\rho(M(z) ) <1 $ for $z >0 $ while $\rho(M(z)) >1 $ for $z < 0$. Furthermore, by the Perron-Frobenius theorem all eigenvectors with positive entries are associated to the eigenvalue given by the spectral radius $\rho(M(z))$. Consequently, $(0, v_0)$ is the unique real eigen-solution of \eqref{eq:non linear eigenproblem}.
\end{proof}

\begin{theorem}[Long-time behaviour] \label{thm:long time behaviour}
    Let the assumptions of Lemma \ref{lem:eigensolution} hold.
    Let $v_0 $ be as in Lemma \ref{lem:eigensolution}.
    Denote with $B \in L^1(\R_+;\mathbb R_+^{|X|})$ the solution of equation \eqref{eq:IntroRE one entrance}.  
    Then there exist three constants $c_0\geq 0 $, $ \varepsilon,\,  C >0$ such that 
    \[
        \| B(t) - c_0 v_0 \| \leq C e^{- \varepsilon t }
    \]
for all $ t >0 $. Here, $c_0$ depends only on the vector $\int_0^\infty B^0(s)\, ds $.
\end{theorem}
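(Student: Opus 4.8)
The plan is to recover $B$ from $\hat B$ by an inverse Laplace transform and to read off the long-time behaviour from the singularities of $z\mapsto \hat B(z)$. By \eqref{eq: Lap B function of B_0} we have $\hat B(z)=(I-M(z))^{-1}\hat B^0(z)$, so these singularities are exactly the poles of $z\mapsto(I-M(z))^{-1}$, i.e. the zeros of $z\mapsto\det(I-M(z))$ (the factor $\hat B^0$ contributes none, since $B^0$ decays exponentially). The strategy has three steps: (i) show that $z=0$ is a simple pole whose residue is precisely $c_0v_0$; (ii) establish a spectral gap, i.e. that $\det(I-M(z))\neq0$ on $\{\Re z\geq-\varepsilon\}\setminus\{0\}$ for some $\varepsilon>0$; and (iii) shift the Bromwich contour across the imaginary axis to $\Re z=-\varepsilon$, collecting the residue at $0$ (which produces the constant $c_0v_0$) and bounding the shifted integral by $Ce^{-\varepsilon t}$.

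For (i), recall from the proof of Lemma \ref{lem:eigensolution} that $M(0)$ is irreducible and non-negative with $\rho(M(0))=1$ a simple eigenvalue and right eigenvector $v_0$; let $u_0\in\R_+^{|X|}$ be the corresponding left Perron eigenvector, normalised by $u_0^\top v_0=1$. Writing $\mu(z)$ for the Perron eigenvalue of $M(z)$ near $0$ and $P(z)$ for the associated spectral projection, the resolvent splits as
\begin{align*}
	(I-M(z))^{-1}=\frac{P(z)}{1-\mu(z)}+H(z),
\end{align*}
with $H$ holomorphic near $0$. Since $z\mapsto\rho(M(z))=\mu(z)$ is strictly decreasing (Lemma \ref{lem:eigensolution}) and differentiable, $\mu'(0)=u_0^\top M'(0)v_0<0$, where $M'(0)_{\alpha\beta}=-\int_0^\infty t\,\Phi_{\beta\alpha}(t)\,dt$. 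Hence $1-\mu(z)=-\mu'(0)z+O(z^2)$ and $z=0$ is a simple pole. The residue of $e^{zt}\hat B(z)$ at $z=0$ is therefore $\frac{v_0u_0^\top\hat B^0(0)}{-\mu'(0)}=c_0v_0$ with $c_0:=\frac{u_0^\top\hat B^0(0)}{-\mu'(0)}\geq0$ and $\hat B^0(0)=\int_0^\infty B^0(s)\,ds$, which is the asserted dependence of $c_0$.

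For (ii), the region $\Re z>0$ is immediate: from $|(M(z))_{\alpha\beta}|=|\hat\Phi_{\beta\alpha}(z)|\leq\hat\Phi_{\beta\alpha}(\Re z)=(M(\Re z))_{\alpha\beta}$ and the monotonicity of the Perron root one gets $\rho(M(z))\leq\rho(M(\Re z))<1$, so $\det(I-M(z))\neq0$ there. The delicate part is the imaginary axis. If $z=iy$ with $y\neq0$ were a zero, then $1$ would be a peripheral eigenvalue of $M(iy)$, giving $\rho(M(iy))=1=\rho(M(0))$ with $|M(iy)|\leq M(0)$ entrywise; by Wielandt's equality characterisation this forces $M(iy)=DM(0)D^{-1}$ for a diagonal unitary $D$, hence $|\hat\Phi_{\beta\alpha}(iy)|=\hat\Phi_{\beta\alpha}(0)$ whenever $\Phi_{\beta\alpha}\not\equiv0$. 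This is impossible, since $e^{-iyt}$ cannot be constant on the support of $\Phi_{\beta\alpha}$ for the non-lattice kernels considered here (e.g. the exponential--polynomial densities of Theorem \ref{thm:ReductionODEtoRE}). Finally, choosing $\delta\in(0,z_0)$ so that $e^{\delta t}\Phi_{\beta\alpha}\in L^1$, the Riemann--Lebesgue lemma yields $M(z)\to0$ as $|\Im z|\to\infty$ on $\{\Re z\geq-\delta\}$, whence $\det(I-M(z))\to1$; the zeros in $\{\Re z\geq-\delta\}$ thus lie in a bounded set and, being isolated, are finite in number, so a suitable $\varepsilon>0$ separates $0$ from the rest. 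This spectral-gap step is the main obstacle.

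It remains to carry out (iii). Subtracting the residue, the function $\hat G(z):=\hat B(z)-c_0v_0/z$ is holomorphic on $\{\Re z>-\varepsilon\}$, and since $\mathcal L^{-1}[c_0v_0/z]=c_0v_0$ for $t>0$, for a contour $\Re z=\gamma>0$ to the right of all singularities we have
\begin{align*}
	B(t)-c_0v_0=\frac{1}{2\pi i}\int_{\gamma-i\infty}^{\gamma+i\infty}e^{zt}\,\hat G(z)\,dz.
\end{align*}
Because $\hat G$ has no singularities in $-\varepsilon<\Re z\leq\gamma$ and decays as $|\Im z|\to\infty$ (here $\hat B^0(z)\to0$ and $(I-M(z))^{-1}$ stays bounded on the shifted line), the contour can be translated to $\Re z=-\varepsilon$, where $|e^{zt}|=e^{-\varepsilon t}$, giving $\norm{B(t)-c_0v_0}\leq Ce^{-\varepsilon t}$. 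The tail estimates justifying the contour shift and the integrability of $\hat G$ along the vertical lines (gained, if necessary, by an integration by parts in $t$ or a mild extra regularity of $B^0$) are standard Laplace-inversion arguments, as in \cite{diekmann2012delay}, and I would treat them as routine.
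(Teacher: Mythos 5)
Your proposal is correct and follows the same overall strategy as the paper's proof: invert the Laplace transform of $\hat{B}(z)=(I-M(z))^{-1}\hat{B}^0(z)$, establish a spectral gap (no zeros of $\det(I-M(z))$ in $\{\Re z\geq -\varepsilon\}$ other than $z=0$, via Wielandt's theorem on the imaginary axis and the Riemann--Lebesgue lemma plus meromorphy to confine the remaining zeros), then deform the contour to $\Re z=-\varepsilon$ and collect the residue at $z=0$. The one step you do genuinely differently is the residue computation, and your version is the stronger one. The paper expands $(I-M(z))^{-1}=\sum_{n\geq -p}z^{n}\mathcal{R}_n$ and uses $(I-M(0))\mathcal{R}_{-1}=0$ to identify the residue as a multiple of $v_0$; this implicitly assumes the pole is simple, since for $p\geq 2$ the residue of $e^{zt}(I-M(z))^{-1}\hat{B}^0(z)$ would also involve $\mathcal{R}_{-2},\dots$ and produce polynomially growing terms in $t$. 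Your perturbation-theoretic argument --- writing $(I-M(z))^{-1}=P(z)/(1-\mu(z))+H(z)$ for the Perron root $\mu(z)$ and computing $\mu'(0)=u_0^\top M'(0)v_0<0$, which is finite and strictly negative because the kernels lie in $L^1(\R_+,e^{-z_0t}dt)$ and $M$ is irreducible --- proves simplicity of the pole and yields the explicit constant $c_0=u_0^\top\hat{B}^0(0)/(-\mu'(0))\geq 0$, so it closes this small gap in the paper. One caveat on your imaginary-axis step: you justify the contradiction by appealing to ``non-lattice kernels considered here (e.g.\ the exponential--polynomial densities of Theorem \ref{thm:ReductionODEtoRE})'', but the theorem is stated for arbitrary kernels satisfying the hypotheses of Lemma \ref{lem:eigensolution}, so you should not lean on special examples; the general reason (the one the paper uses) is that taking real parts of $\int_0^\infty\bigl(1-e^{i(\mu_\alpha-\mu_\beta-yt)}\bigr)\Phi_{\alpha\beta}(t)\,dt=0$ gives a nonnegative integrand, and since $\Phi_{\alpha\beta}$ is an $L^1$ density the set where the integrand vanishes, $\{t:\mu_\alpha-\mu_\beta-yt\in 2\pi\Z\}$, has Lebesgue measure zero for $y\neq0$, forcing $\Phi_{\alpha\beta}=0$ a.e.\ and contradicting irreducibility. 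Finally, both you and the paper treat the integrability of the integrand along the vertical line $\Re z=-\varepsilon$ as routine; you at least flag this explicitly, which matches the level of rigor of the paper's own argument.
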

\begin{proof}

By the Laplace inversion formula, see \cite{diekmann2012delay}, applied to \eqref{eq: Lap B function of B_0} we have that 
\begin{equation}\label{Mellin Laplace inversion}
B(t)= \frac{1}{2 \pi i } \lim_{ T \rightarrow \infty } \int_{\gamma-iT}^{\gamma+iT} e^{z t } (I-M(z))^{-1} \hat{B}^0 (z) dz 
\end{equation}
for $\gamma > \sup \{ Re z : \det (I-M(z) )=0\} $. 

We now compute the integral on the right-hand side of equation \eqref{Mellin Laplace inversion}. 
To this end, we first prove that there exists an $\varepsilon>0$ such that, if $z \in \mathbb C$ satisfies $\det (I-M(z) )=0 $, then $\Re z \leq - \varepsilon $ or $z=0$.
Indeed assume that there exists a $\overline z \in  \mathbb R\setminus\{0\} $ such that $\det (I-M( i \overline z ) )=0$. In particular, $M( i \overline z )$ has one as eigenvalue. Note that the absolute value of any element of the matrix $M(i\overline z ) $ is smaller or equal than the corresponding element of $M(0) $. Consequently, by Wielandt's theorem (see \cite[Chapter 8.3]{carl2000matrix}) there is a diagonal unitary matrix $D$ such that $M(i \overline z)=DM(0) D^{-1}$. More precisely, the diagonal elements of $D$ have the form $D_{\alpha\alpha}=e^{i \mu_\alpha }$ for some $\mu_\alpha\in\R$. We infer $M(i \overline z )_{\alpha\beta}= e^{ i (\mu_\alpha- \mu_\beta ) } M(0)_{\alpha \beta } $. As a consequence we have 
 \[
\int_0^\infty \left( 1-  e^{ i ( \mu_\alpha- \mu_\beta -  \overline z t ) } \right) \Phi_{\alpha \beta }(t) dt =0. 
 \]
Since $\overline z\neq 0$ we obtain $\Phi_{\alpha \beta } (t)=0$  a.e., which contradicts the assumption that the matrix $M(z) $ is irreducible.
We deduce that $\det (I- M(z))=0$ implies $\Re z <0$ or $z=0$.

We now prove that there exists an $\varepsilon$ such that if $\det (I - M(z))=0$ then $\Re z \leq - \varepsilon$ or $z=0$. 
Thanks to the Riemann Lebesgue Lemma for every $x <0$ there exists a $\eta_0 >0$ such that the matrix $ (I-M(s+ i \eta )) $ is invertible for every $s \in [ x, 0] $ and $|\eta |> \eta_0 $. 
Moreover, since the function $ z \mapsto (I- M(z))^{-1}$ is meromorphic, we deduce that the number of poles contained in the compact set $\{ \lambda \in \mathbb C : |\Im \lambda | \leq \eta_0 \text{ and }  \Re \lambda \in  [  x , 0 ]\}$ is finite. 
Hence, we can choose $\varepsilon>0$ small enough so that $ z \mapsto (I - M(z))^{-1}$ has only one pole $z=0$ in the set $\{ \lambda \in \mathbb C : \Re \lambda > -\varepsilon  \}$.

Therefore we can consider any $\gamma >0 $ in \eqref{Mellin Laplace inversion}.
We deduce that
\begin{align*}
   \int_{\gamma-iT}^{\gamma+iT} e^{z t } (I-M(z))^{-1} \hat{B}^0 (z) dz = 
     \oint_{\Gamma} e^{z t } (I-M(z))^{-1} \hat{B}^0 (z) dz - \sum_{i=1}^3 \int_{\Gamma_i } e^{z t } (I-M(z))^{-1} \hat{B}^0 (z) dz 
\end{align*}
where $\Gamma:= \cup_{i =1}^4\Gamma_i  $ and where $\Gamma_4$ is the segment in the complex plane connecting $\gamma+i T $ to $\gamma-i T $, $\Gamma_2$ is the segment connecting $\gamma+i T $ to $-\varepsilon + i T $, 
$\Gamma_3$ is the segment connecting $-\varepsilon + i T $ to $-\varepsilon - i T $, finally $\Gamma_1$ connects $- \varepsilon -i T $ to  $\gamma -i T $. 

By the residue theorem we have 
\[
  \frac{1}{2 \pi i }    \oint_{\Gamma} e^{z t } (I-M(z))^{-1} \hat{B}^0 (z) dz = Res_{z=0} (I-M(z))^{-1} \hat{B}^0 (z). 
\]
We now compute $Res_{z=0} (I-M(z))^{-1} \hat{B}^0 (z)$. To this end, we notice that for small $z$ we have the following Laurent series representation of $(I - M (z))^{-1}$ 
\[
(I - M (z))^{-1} = \sum_{n=-p}^\infty z^n \mathcal R_n, 
\]
where $\mathcal R_n\in\R^{|X|\times|X|}$, $p\geq 1$. Since $(I - M (z))^{-1} (I - M (z))=(I - M (z)) (I - M (z))^{-1} = I $ we deduce that 
$ (I - M(z)) \mathcal R_{-1} =\textbf{0}$ which implies that $\mathcal R_{-1} v = \alpha v_0 $ for every vector $v$ and for some $\alpha \in \mathbb C$ that depends on $v$.
Thus $Res_{z=0}(I-M(z))^{-1} \hat{B}^0 (z) = \mathcal R_{-1} \hat{B}^0 (0)= c_0 v_0$ where the constant $c_0\geq0$ depends on $\hat{B}^0(0)= \int_0^\infty B^0(s)\, ds$. 
In addition, by the Riemann Lebesgue Lemma, we have that 
\[
\lim_{T \rightarrow \infty }  \frac{1}{2 \pi i }  \left\| \int_{\Gamma_i} e^{z t } (I-M(z))^{-1} \hat{B}^0 (z) dz \right\|  =0
\]
for $i=1,2$. Moreover 
\begin{align*}
\lim_{T \rightarrow \infty }  \frac{1}{2 \pi i }  \left\| \int_{\Gamma_3 } e^{z t } (I-M(z))^{-1} \hat{B}^0 (z) dz \right\| 
= \lim_{T \rightarrow \infty }  \frac{1}{2 \pi i }  \left\| \int_{- \varepsilon - i T }^{- \varepsilon + i T } e^{z t } (I-M(z))^{-1} \hat{B}^0 (z) dz \right\| 
\leq C e^{-\varepsilon t }. 
\end{align*}
The desired conclusion follows.

\end{proof}

\begin{lemma}
    Make the assumptions of Theorem \ref{thm:long time behaviour}. 
    Let $\mathbf{K} \in \mathbb R_+^{|X| \times |X|} $ be the diagonal matrix s.t. $\mathbf{K}_{\alpha \alpha } = k_\alpha$. 
      Denote with $D \in \mathbb R_+^{|X|}$ the function given by \eqref{eq:IntroConcentrationOneOutgoing}. 
    Assume that there exist $z_0,  C_1, C_2 >0$ such that for every $t>0$    \[
    k_\alpha \leq C_1 e^{- z_0 t } \text{ and } D^0_\alpha \leq C_2 e^{- z_0 t } 
    \]
    for every $\alpha \in X$. 
    Then there exist two constants  $C, \rho >0$ such that
    \begin{align*}
        \left\| D(t) - c_0  v_0 \right\| \leq Ce^{-\rho t }
    \end{align*}
    for every $t >0$, where the constant $c_0 $ is the same as in Theorem \ref{thm:long time behaviour}. 
\end{lemma}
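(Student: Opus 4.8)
The plan is to treat this statement as a corollary of Theorem \ref{thm:long time behaviour}, propagating the exponential convergence of $B$ through the convolution \eqref{eq:IntroConcentrationOneOutgoing}. First I would write, componentwise, $D_\alpha(t) = D^0_\alpha(t) + \int_0^t k_\alpha(t-s) B_\alpha(s)\, ds$ and decompose $B_\alpha(s) = c_0 (v_0)_\alpha + r_\alpha(s)$, where by Theorem \ref{thm:long time behaviour} the remainder satisfies $\|r(s)\| \leq C e^{-\varepsilon s}$. After the change of variables $u = t-s$ in the leading part this gives
\[
D_\alpha(t) - c_0 (v_0)_\alpha = D^0_\alpha(t) + c_0 (v_0)_\alpha \left( \int_0^t k_\alpha(u)\, du - 1 \right) + \int_0^t k_\alpha(t-s) r_\alpha(s)\, ds,
\]
and I would estimate each of the three terms on the right separately.

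The first term is bounded by $C_2 e^{-z_0 t}$ directly from the hypothesis. For the second term the key point is to identify $\int_0^\infty k_\alpha(u)\, du = 1$: this follows from the conservation relation \eqref{kernel condition for conservation of numbers}, which gives $k_\alpha = \sum_{\beta\neq\alpha} \Phi_{\alpha\beta}$, together with the normalization \eqref{eq:IntodKernelConservation}. Here the irreducibility hypothesis inherited from Lemma \ref{lem:eigensolution} guarantees that every compartment is connected to the others, so the degenerate sink case \eqref{eq:ApproxKernelsOneEntranceConservationzeroIntro} cannot occur and the normalization is indeed $1$ for every $\alpha$. Consequently $\int_0^t k_\alpha(u)\, du - 1 = -\int_t^\infty k_\alpha(u)\, du$, whose absolute value is at most $\int_t^\infty C_1 e^{-z_0 u}\, du = (C_1/z_0) e^{-z_0 t}$.

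The third term is a convolution of two exponentially decaying functions, which I would bound by
\[
\left| \int_0^t k_\alpha(t-s) r_\alpha(s)\, ds \right| \leq C_1 C \int_0^t e^{-z_0(t-s)} e^{-\varepsilon s}\, ds = C_1 C\, e^{-z_0 t} \int_0^t e^{(z_0 - \varepsilon)s}\, ds.
\]
This decays like $e^{-\min(z_0,\varepsilon) t}$ when $z_0 \neq \varepsilon$, and like $t\, e^{-z_0 t}$ in the resonant case $z_0 = \varepsilon$. Collecting the three estimates and choosing any $\rho$ with $0 < \rho < \min(z_0, \varepsilon)$ strictly — so as to absorb the possible factor $t$ of the resonant case into the exponential — yields $\|D(t) - c_0 v_0\| \leq C e^{-\rho t}$ for a suitable constant $C$, with the limit $c_0 v_0$ the same as in Theorem \ref{thm:long time behaviour}. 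The only point requiring genuine care is the bookkeeping of the competing decay rates and the resonant case $z_0 = \varepsilon$; everything else is a direct consequence of the exponential convergence of $B$ already established, so I do not expect any substantial obstacle.
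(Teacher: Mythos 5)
Your proof is correct and follows essentially the same route as the paper: both split $D(t)-c_0v_0$ into the forcing term $D^0$, the tail $-c_0\int_t^\infty \mathbf{K}(s)v_0\,ds$ coming from $\int_0^\infty k_\alpha(u)\,du=1$, and the convolution of $k_\alpha$ with the error $B-c_0v_0$, then bound each exponentially. Your treatment is slightly more careful on two points the paper glosses over — deriving $\int_0^\infty k_\alpha=1$ from \eqref{kernel condition for conservation of numbers} and \eqref{eq:IntodKernelConservation} rather than asserting it from irreducibility, and handling the resonant case $z_0=\varepsilon$ explicitly — but these are refinements of the same argument, not a different one.
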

\begin{proof}
    First of all, note that $\int_0^\infty k_\alpha (t ) dt =1$ for every $\alpha \in X$ since $\{\Phi\}$ is irreducible. Then, by definition of $D(t) $ and the bounds on $D^0$, $k_\alpha $ we have 
    \begin{align*}
     &   \left\| D (t) - c_0 v_0   \right\|= \left\| D^0 (t) +  \int_0^t \textbf{K} (s) B(t-s) ds  - c_0 \int_0^t \textbf{K}(s) v_0ds  - c_0  \int_t^\infty  \textbf{K}(s) v_0ds  \right\|\\
        & \leq C  e^{- z_0 t }+  \left\|\int_0^t \textbf{K}(s) \left[ B(t-s) - c_0 v_0\right]  ds  \right\|   
        \leq C e^{- z_0 t }+ \int_0^t  \left\|\textbf{K}(s)  \right\| \left\| B(t-s) - c_0 v_0\right\|  ds \\
        &\leq C  e^{- z_0 t }+ C  e^{- \varepsilon t }\int_0^t  \left\|\textbf{K}(s)  \right\|  e^{ \varepsilon s } ds   \leq C  e^{- z_0 t }+ C  e^{- \varepsilon t }\int_0^t e^{ ( \varepsilon- z_0) s } ds \leq  C  e^{- \rho t }
    \end{align*}
    for some $\rho>0$.
    Notice that for the above computation we have used the long-time behaviour of $B$ computed in Theorem \ref{thm:long time behaviour} and the bounds on $k_\alpha  $ and $D^0_\alpha $. 
\end{proof}

\begin{theorem}
  Make the assumptions of Theorem \ref{thm:long time behaviour}. 
    Let $\mathbf{K} \in \mathbb R_+^{|X|} $ be the diagonal matrix s.t. $\mathbf{K}_{\alpha \alpha } = k_\alpha$. 
    Assume in addition that $N^0= \int_0^\infty D^0(s) ds $. 
    Then the solution $N (t) \in \mathbb R_+^{|X|}$ of \eqref{eq:IntroConcentrationOneEntrance} is such that there exists two positive constants $r$ and $C $ such that for all $t>0$
\begin{equation} \label{asympt N}
 \left\| N(t) - c_0 v_0 \int_0^\infty s \mathbf{K}(s) ds  \right\| \leq C e^{-rt },
\end{equation}
where $c_0$ is the same constant as in Theorem \ref{thm:long time behaviour}. 
\end{theorem}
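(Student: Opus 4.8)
The plan is to reduce the long-time behaviour of $N$ to that of $B$, already established in Theorem \ref{thm:long time behaviour}, by an explicit integration in the spirit of Little's law from queueing theory. First I would integrate \eqref{eq:IntroConcentrationOneEntrance} in time and substitute the expression \eqref{eq:IntroConcentrationOneOutgoing} for the outflux, so that componentwise
\begin{align*}
    N_\alpha(t) = N_\alpha^0 + \int_0^t B_\alpha(s)\,ds - \int_0^t D_\alpha^0(s)\,ds - \int_0^t \int_0^s k_\alpha(s-r) B_\alpha(r)\,dr\,ds.
\end{align*}
Applying Fubini to the double integral and recalling that $\int_0^\infty k_\alpha(u)\,du = 1$ (which holds because $\Phi$ is irreducible, cf. the proof of the preceding lemma), I would rewrite the $B_\alpha$-terms through the survival function $G_\alpha(\tau) := \int_\tau^\infty k_\alpha(u)\,du$ to obtain
\begin{align*}
    N_\alpha(t) = N_\alpha^0 - \int_0^t D_\alpha^0(s)\,ds + \int_0^t B_\alpha(s)\,G_\alpha(t-s)\,ds.
\end{align*}

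The assumption $N^0 = \int_0^\infty D^0(s)\,ds$ then turns the first two terms into the tail $\int_t^\infty D_\alpha^0(s)\,ds$, which decays exponentially because $D^0_\alpha\in L^1(\mathbb{R}_+, e^{-z_0 t}\,dt)$. For the convolution term I would split $B_\alpha(s) = c_0 (v_0)_\alpha + \bigl(B_\alpha(s) - c_0 (v_0)_\alpha\bigr)$, controlling the remainder by $Ce^{-\varepsilon s}$ via Theorem \ref{thm:long time behaviour}. The constant part yields $c_0(v_0)_\alpha \int_0^t G_\alpha(\tau)\,d\tau$, and a further application of Tonelli identifies $\int_0^\infty G_\alpha(\tau)\,d\tau = \int_0^\infty s\,k_\alpha(s)\,ds$, which is exactly the $\alpha$-th diagonal entry of $\int_0^\infty s\,\mathbf{K}(s)\,ds$; this pins down the limit vector as $c_0 v_0 \int_0^\infty s\,\mathbf{K}(s)\,ds$ and recovers \eqref{asympt N} up to the error terms.

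It remains to produce the exponential rate, which is where the main work lies. I would estimate the three error contributions separately: first, the tail $\int_t^\infty D_\alpha^0(s)\,ds \le e^{-z_0 t}\,\|D^0_\alpha\|_{L^1(\mathbb{R}_+, e^{-z_0 t}dt)}$; second, the tail $\int_t^\infty G_\alpha(\tau)\,d\tau = \int_t^\infty (u-t)\,k_\alpha(u)\,du \le \int_t^\infty u\,k_\alpha(u)\,du$, which I would dominate by $Ce^{-\rho t}$ for any $\rho<z_0$ using $u\,e^{-z_0 u}\le C_\rho e^{-\rho u}$ together with $k_\alpha\in L^1(\mathbb{R}_+, e^{-z_0 t}dt)$; and third, the convolution of the exponentially small remainder of $B_\alpha$ against the bounded, integrable kernel $G_\alpha$, which I would estimate by splitting the integral at $s=t/2$, using $G_\alpha(t-s)\le G_\alpha(t/2)\le Ce^{-z_0 t/2}$ on $[0,t/2]$ and $e^{-\varepsilon s}\le e^{-\varepsilon t/2}$ with $\int_0^\infty G_\alpha<\infty$ on $[t/2,t]$. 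Collecting these with $r := \tfrac{1}{2}\min(z_0,\varepsilon)$ gives the claimed estimate.

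The main obstacle is precisely the extraction of exponential rates in the second and third steps: since only the weighted integrability $k_\alpha, D_\alpha^0\in L^1(\mathbb{R}_+, e^{-z_0 t}\,dt)$ is available rather than pointwise exponential bounds, each tail must be converted into a decay estimate at a slightly reduced rate, and the convolution estimate must be arranged so that the exponential decay of $B_\alpha - c_0 v_0$ and the tail decay of $G_\alpha$ are balanced against one another. Everything else is a routine combination of Fubini/Tonelli with the already-established exponential convergence of $B$.
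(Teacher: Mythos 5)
Your proof is correct and follows essentially the same route as the paper's: integrate \eqref{eq:IntroConcentrationOneEntrance}, substitute \eqref{eq:IntroConcentrationOneOutgoing}, use Fubini together with $\int_0^\infty k_\alpha(s)\,ds=1$ to rewrite the $B$-terms through the survival function $\int_{t-s}^\infty \mathbf{K}(v)\,dv$, cancel $N^0$ against $\int_0^\infty D^0(s)\,ds$, split $B$ around $c_0 v_0$, and identify the limit vector via Tonelli. The only differences are cosmetic: the paper bounds the convolution error by computing $\int_0^t e^{-\varepsilon s} e^{-z_0(t-s)}\,ds$ directly rather than splitting at $s=t/2$, and it asserts the tail bounds at rate exactly $z_0$ where you settle for any $\rho<z_0$ --- both yield the claimed exponential estimate.
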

\begin{proof}
    By definition of $N=(N_\alpha)_{\alpha \in X} $ we have that 
    \begin{align*}
        N(t)=N^0 + \int_0^t \left[ B(s) - D(s) \right] ds. 
    \end{align*}
Then  
\begin{align*}
   & \left\| N(t) - c_0 v_0 \int_0^\infty v \mathbf{K} (v) dv   \right\| = \left\| N^0 +  \int_0^t \left[ B(s) - D(s) \right] ds  - c_0 v_0 \int_0^\infty v \mathbf{K} (v) dv   \right\| \\
    &  =  \left\| N^0 +  \int_0^t \left[ B(s) - D^0(s) - \int_0^s \mathbf{K}(s-v ) B(v) dv  \right] ds  - c_0 v_0 \int_0^\infty v \mathbf{K} (v) dv   \right\| \\
    &
    \leq \left\| N^0 - \int_0^t D^0(s) ds  \right\| +  \left\| \int_0^t \left[ B(s)  - \int_0^s \mathbf{K}(s-v ) B(v) dv  \right] ds  -c_0 v_0 \int_0^\infty v \mathbf{K} (v) dv   \right\|.
\end{align*}
We know that, for some positive constants $C_1, \, C_2, \, C_3 $ and $z_0,\,  \varepsilon>0$
\[
  \left\| N^0 - \int_0^t D^0(s) ds  \right\| \leq C_1 e^{-z_0 t }, \  \left\| B(t) - c_0 v_0 \right\| \leq C_2 e^{-\varepsilon t }, \ \left\| v_0 \int_t^\infty \int_s^\infty \textbf{K} (v) dv ds \right\|  \leq C_3 e^{-z_0 t}. 
\]
Then, we conclude
\begin{align*}
 &    \left\| N^0 - \int_0^t D^0(s) ds  \right\| +  \left\| \int_0^t \left[ B(s)  - \int_0^s \mathbf{K}(s-v ) B(v) dv  \right] ds  - c_0v_0 \int_0^\infty v  \mathbf{K}(v) dv   \right\| \\
     &\leq  (C_1+C_3)e^{-z_0 t } +  \left\| \int_0^t  B(s)\int_{t-s}^\infty  \mathbf{K}(v )  dv ds  -c_0  v_0 \int_0^t \int_{t-s}^\infty  \mathbf{K} (v) dv ds  \right\| \\
     &\leq Ce^{-z_0 t } +  \left\| \int_0^{t} \left[  B(s)  -c_0 v_0 \right]\int_{t-s}^\infty   \mathbf{K}(v )  dv ds  \right\| \leq C e^{-z_0 t } +  \int_0^{t} \left\|  B(s)  -c_0 v_0 \right\| \left\| \int_{t-s}^\infty   \mathbf{K}(v )  dv   \right\| ds  \\
     & \leq  C e^{-z_0 t } + C   \int_0^{t} e^{-\varepsilon s} e^{(s-t) z_0 } ds  \leq C e^{-rt} 
\end{align*}
for some $r,\,  C>0$. 
\end{proof}
  
\section{Response functions and structured population equations} \label{sec:PDE}
In this section, we study the relation between the formalism of response functions introduced in Section \ref{sec:ReformODEasRFE} and the formalism of structured population models, which is often used in biology, see for instance \cite{perthame2006transport}.  
 In Section \ref{sec:PDEclassical} we study the relation between \eqref{eq:IntroConcentrationOneEntrance},  \eqref{eq:IntroRE one entrance}, \eqref{eq:IntroConcentrationOneOutgoing} and \eqref{eq:IntrodModelPDE}, while in Section \ref{sec:PDEgeneralized} we consider the equivalence between \eqref{eq:Concentration}-\eqref{eq:REInflux}-\eqref{eq:REOutflux} and generalizations of \eqref{eq:IntrodModelPDE}. 
 We notice that in order to obtain this equivalence it is crucial to study the properties of the forcing function $\{ B_\alpha ^0 \} $ which describes the transient response of the system, (cf. \ref{PDE RE:forcing functions}). 
Finally, in Section \ref{sec:ODE SPEs} we describe the conditions on the initial data $\{ n_\alpha^0\} $ that allow to rewrite the ODE model \eqref{eq:PrelDecompODE} as SPEs. 

\subsection{Classical RFEs} \label{sec:PDEclassical}
Here, we reformulate  \eqref{eq:IntroConcentrationOneEntrance},  \eqref{eq:IntroRE one entrance}, \eqref{eq:IntroConcentrationOneOutgoing} as a structured population model and we study the equivalence of the two formulations. 
To this end, we introduce the concept of age of an individual in some compartment $\alpha$. 
In this section, we assume that
every element enters the compartment $\alpha $ with the same age $\xi=0$.

Let $f_\alpha(t, \xi) $ be the number of elements in the compartment $\alpha$ with age $\xi$ at time $t$. Since here we assume that all the elements enter the compartment $\alpha $ with age $\xi=0$, the age of an element in the compartment $\alpha$ at a certain time $t \geq0 $ is just $\xi=t-\overline t$, where $\overline t $ is the time at which it entered in $\alpha$.

We then have the following evolution equation for $f_\alpha $
\begin{align}\label{eq:fl}
   & \partial_t f_\alpha (t,\xi) + \partial_\xi f_\alpha (t,\xi) = -   \Lambda_\alpha(\xi) f_\alpha (t,\xi) , \\
    & f_\alpha(t,0) = \sum_{\beta \in X \setminus \{\alpha\} } \int_{\R_+} \lambda_{ \beta \alpha } (\eta) f_\beta(t,\eta) d\eta \label{eq:f2}\\
    & f_\alpha (0,\xi)=m_\alpha (-\xi) e^{- \int_0^{\xi} \Lambda_\alpha(s) ds } \label{eq:f3}
\end{align}
where $\Lambda_\alpha(\xi) = \sum_{\beta \in X \setminus \{\alpha \} }\lambda_{\alpha\beta}(\xi) $.
Indeed, $f_\alpha(t, \xi) $ changes in time due to the aging of the elements, described by the transport term in \eqref{eq:fl}, due to jumps from the compartment $\alpha $ to any other compartment $\beta \in X$, described by the loss term in \eqref{eq:fl}, and, finally, due to the fact that elements enter the compartment $\alpha$ with age zero, from any other compartment $\beta \in X$. 
Accordingly, $\lambda_{\alpha \beta} (\eta)$ in the above equation is the rate at which an individual, which has been in the compartment $\alpha $ for a time interval of length $\eta$, jumps from compartment $\alpha $ to compartment $\beta $. Finally, $m_\alpha (-\xi)$ in \eqref{eq:f3} is the number of elements with state in the compartment $\alpha $ at time $-\xi$. We then obtain $f_\alpha (0, \xi) $, multiplying $m_\alpha (\xi) $ by the probability that these elements stay in the compartment $\alpha $ up to time $0$, namely multiplying by $e^{-\int_0^\xi \Lambda_\alpha (s) ds } $.

\begin{definition}  \label{def:measure solution age=0} 
Let $\lambda_{\alpha \beta } \in C_b(\mathbb R_+ )$ be non-negative for every $\alpha, \beta \in X$.
Assume that $f_\alpha^0 \in \mathcal M_{+, b} (\mathbb R_+)$ for every $\alpha \in X$. 
A family of functions $\{ f_\alpha \}$ with $f_\alpha \in C([0, T] ; \mathcal M_{+,b}(\mathbb R_+) ) $ is a solution of equation \eqref{eq:fl}, with initial condition $f_\alpha^0(\cdot)$, if for every $\varphi \in C^1_c(\mathbb R_+) $  the map $t \mapsto \int_{\mathbb R_+} \varphi(\xi) f_\alpha(t, d \xi)  $ is differentiable and 
\begin{align} \label{eq:weak formPDE}
 \frac{d}{dt}   \int_{\mathbb R_+} \varphi(\xi) f_\alpha(t, d \xi) &=    \int_{\mathbb R_+} \left[ \varphi'(\xi) - \Lambda_\alpha (\xi) \varphi(\xi)  ) \right] f_\alpha(t , d \xi)  + \varphi(0) \sum_{\beta \in X \setminus \{ \alpha\} } \int_{\mathbb R_+} \lambda_{\beta \alpha} (\eta) f_\beta(t, d\eta) 
\end{align}
for every $\alpha \in X$. 
Furthermore $f_\alpha(0, \cdot)=f_{\alpha}^0(\cdot)$. 
\end{definition}

We recall that in the definition of $C([0, T], \mathcal M_{+, b} (\mathbb R_+))$  
we endow the space $\mathcal M_{+, b} (\mathbb R_+)$ with the Wasserstein distance, see Section \ref{sec:notation}.  
\begin{lemma}\label{lem:sol PDE}
Let $\lambda_{\alpha \beta } \in C_b(\mathbb R_+ )$ to be non-negative for every $\alpha , \beta \in X$. 
Let $m_\alpha \in \mathcal M_{+, b} (\mathbb R_+)$ for $\alpha \in X$.  
    Assume that $f_\alpha^0$ is given by \eqref{eq:f3}. 
    Then there exists a unique solution $\{ f_\alpha \} $ with $f_\alpha \in C([0, T], \mathcal M_{+, b}(\mathbb R_+) ) $ for all $\alpha \in X$ to equation \eqref{eq:fl} in the sense of Definition \ref{def:measure solution age=0}. 
    The solution satisfies
    \begin{equation} \label{exponential bound f_alpha}
\int_{\mathbb R_+} e^{ \int_0^{\xi} \Lambda_\alpha (v) d v } f_\alpha (t, d \xi) < \infty, 
    \end{equation}
   for every $\alpha \in X$ and every $t \geq 0$, where $\Lambda_\alpha (t):= \sum_{\beta \in X \setminus \{\alpha\} } \lambda_{\alpha \beta} (t)$. 
\end{lemma}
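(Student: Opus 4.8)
The plan is to solve the system by the method of characteristics, reducing it to a renewal equation for the boundary fluxes $B_\alpha(t):=f_\alpha(t,0)$ and then reading off the solution together with all its properties from an explicit Duhamel representation. Since the transport operator $\partial_t+\partial_\xi$ propagates mass along the lines $\xi-t=\mathrm{const}$ at unit speed, integrating formally along characteristics suggests that the solution should act on a test function by
\begin{align}\label{eq:planrep}
	\int_{\R_+}\varphi(\xi)\, f_\alpha(t,d\xi) = \int_{\R_+}\varphi(\xi+t)\, e^{-\int_\xi^{\xi+t}\Lambda_\alpha(u)\,du}\, f_\alpha^0(d\xi) + \int_0^t \varphi(t-s)\, e^{-\int_0^{t-s}\Lambda_\alpha(u)\,du}\, B_\alpha(s)\, ds,
\end{align}
the first term carrying the initial mass forward in age and the second accounting for the mass that enters at age $0$ at time $s$ and survives for a further time $t-s$. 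I would take \eqref{eq:planrep} as the \emph{definition} of the candidate $f_\alpha(t,\cdot)$, which is a non-negative bounded Radon measure, being the sum of the push-forward of $e^{-\int_\cdot^{\cdot+t}\Lambda_\alpha}f_\alpha^0$ under the age shift $\xi\mapsto\xi+t$ and an absolutely continuous part; its total mass is bounded by $\|f_\alpha^0\|_{TV}+\int_0^t B_\alpha(s)\,ds$.

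To determine $B_\alpha$ I would impose the boundary condition \eqref{eq:f2}, namely $B_\alpha(t)=\sum_{\beta\neq\alpha}\int_{\R_+}\lambda_{\beta\alpha}(\eta) f_\beta(t,d\eta)$, and insert \eqref{eq:planrep} for $f_\beta$. The initial-data contributions assemble into a forcing function while the age-$0$ contributions produce a convolution, yielding exactly a renewal equation of the form \eqref{eq:IntroRE one entrance},
\begin{align*}
	B_\alpha(t) = B_\alpha^0(t) + \sum_{\beta\neq\alpha}\int_0^t \Phi_{\beta\alpha}(t-s)\, B_\beta(s)\, ds,
\end{align*}
with kernel $\Phi_{\beta\alpha}(\tau)=\lambda_{\beta\alpha}(\tau)\, e^{-\int_0^\tau \Lambda_\beta(u)\,du}$ and forcing $B_\alpha^0(t)=\sum_{\beta\neq\alpha}\int_{\R_+}\lambda_{\beta\alpha}(\xi+t)\, e^{-\int_\xi^{\xi+t}\Lambda_\beta(u)\,du}\, f_\beta^0(d\xi)$. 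Because the $\lambda_{\beta\alpha}$ lie in $C_b(\R_+)$ and the $f_\beta^0$ are bounded measures, the kernels are continuous and bounded and the forcing is continuous, bounded and non-negative; hence existence, uniqueness, non-negativity and local integrability of $\{B_\alpha\}$ follow from the resolvent theory for renewal equations already used in Lemma \ref{lem:Wellposedness RFE} (cf. \cite[Section 2.3]{Gripenberg1990VolterraIntEq}).

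With $\{B_\alpha\}$ fixed and $f_\alpha$ defined by \eqref{eq:planrep}, I would verify the remaining assertions. For the weak formulation \eqref{eq:weak formPDE} I differentiate \eqref{eq:planrep} in $t$: the initial-data term yields $\int[\varphi'(\xi+t)-\Lambda_\alpha(\xi+t)\varphi(\xi+t)]e^{-\int_\xi^{\xi+t}\Lambda_\alpha}f_\alpha^0(d\xi)$, while the convolution term, being $h\ast B_\alpha$ with $h(\tau)=\varphi(\tau)e^{-\int_0^\tau\Lambda_\alpha}\in C^1$, differentiates to $\varphi(0)B_\alpha(t)+\int_0^t[\varphi'(t-s)-\Lambda_\alpha(t-s)\varphi(t-s)]e^{-\int_0^{t-s}\Lambda_\alpha}B_\alpha(s)\,ds$; recognising the first group as $\int[\varphi'-\Lambda_\alpha\varphi]\,f_\alpha(t,d\xi)$ through \eqref{eq:planrep}, and the term $\varphi(0)B_\alpha(t)$ as $\varphi(0)\sum_{\beta\neq\alpha}\int\lambda_{\beta\alpha}f_\beta(t,d\eta)$ precisely because $B_\alpha$ solves the renewal equation, reproduces \eqref{eq:weak formPDE}. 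Continuity in $C([0,T];\mathcal M_{+,b}(\R_+))$ for $W_1$ follows by estimating \eqref{eq:planrep} at two nearby times against $\varphi$ with $\|\varphi\|_{Lip}\le1$: the argument shift by $h$, the Lipschitz continuity of $\tau\mapsto e^{-\int_0^\tau\Lambda_\alpha}$ and the absolute continuity of $\int_0^\cdot B_\alpha$ give a bound $O(h)$ uniform in $\varphi$. Finally, the exponential moment \eqref{exponential bound f_alpha} is obtained by inserting $\varphi=e^{\int_0^\xi\Lambda_\alpha}$ into \eqref{eq:planrep} (justified by monotone convergence from bounded truncations): the two survival factors cancel, leaving $\int e^{\int_0^\xi\Lambda_\alpha}f_\alpha^0(d\xi)+\int_0^t B_\alpha(s)\,ds$, and by the form \eqref{eq:f3} of $f_\alpha^0$ the first integral equals the total mass of $m_\alpha$, hence is finite.

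For uniqueness I would reverse the characteristic computation. Given any solution in the sense of Definition \ref{def:measure solution age=0}, I extend \eqref{eq:weak formPDE} to time-dependent test functions and insert $\psi(s,\xi)=\varphi(\xi+t-s)\,e^{-\int_\xi^{\xi+t-s}\Lambda_\alpha(u)\,du}$, which solves the backward transport equation $\partial_s\psi+\partial_\xi\psi-\Lambda_\alpha\psi=0$ with $\psi(t,\cdot)=\varphi$ and is $C^1$ since $\Lambda_\alpha$ is continuous. This forces the representation \eqref{eq:planrep}, so the boundary values $B_\alpha(t)=\sum_{\beta\neq\alpha}\int\lambda_{\beta\alpha}f_\beta(t,d\eta)$ necessarily solve the same renewal equation; uniqueness for that equation then determines $\{B_\alpha\}$ and, through \eqref{eq:planrep}, the whole family $\{f_\alpha\}$. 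I expect the main obstacle to lie exactly here: rigorously upgrading the weak formulation to time-dependent test functions and controlling $\psi$ when $\Lambda_\alpha$ is merely continuous (so $\psi$ is only $C^1$, not smooth), together with the measure-theoretic bookkeeping (Fubini and dominated or monotone convergence) needed to pass between the duality formulation and the explicit representation in the measure-valued setting.
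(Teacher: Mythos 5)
Your proposal is correct, and it takes a genuinely different route from the paper's proof. The paper works directly with the measure-valued unknown: it recasts the weak formulation \eqref{eq:weak formPDE} as a fixed-point equation $f=\mathcal{T}[f]$ in $C([0,T],\mathcal{M}_{+,b}(\R_+))^{|X|}$, where $\mathcal{T}$ is exactly your Duhamel operator with $B_\alpha(s)$ replaced by $\sum_{\beta\neq\alpha}\int_{\R_+}\lambda_{\beta\alpha}(\eta)\,f_\beta(s,d\eta)$; it proves $\mathcal{T}$ is a contraction on a suitable subset for small $T$, applies Banach's fixed point theorem, iterates in time, and finally obtains \eqref{exponential bound f_alpha} by testing the fixed-point identity with truncations $\chi_{R,\varepsilon}(\xi)e^{\int_0^\xi\Lambda_\alpha}$ — essentially the same device you use. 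You instead take the boundary fluxes $B_\alpha$ as unknowns, reduce to a finite system of scalar renewal equations with kernels $\lambda_{\beta\alpha}(\tau)e^{-\int_0^\tau\Lambda_\beta}$ (precisely the kernels \eqref{psi as a function of lambda}), solve it by the resolvent theory already invoked in Lemma \ref{lem:Wellposedness RFE}, and only then define $f_\alpha$ by the explicit representation. What your route buys: non-negativity, the mass bound and the exponential moment come for free from the explicit formula (the survival factors cancel), and you effectively establish the substance of Theorem \ref{thm:RF to SP} — the equivalence with the RFE \eqref{eq:IntroRE one entrance} — inside the well-posedness proof, whereas the paper proves it separately afterwards. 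What it costs: uniqueness no longer comes for free from a contraction; you must show every solution in the sense of Definition \ref{def:measure solution age=0} admits the representation, which is your backward-characteristics step. That step does go through with the stated regularity: $\psi(s,\xi)=\varphi(\xi+t-s)\,e^{-\int_\xi^{\xi+t-s}\Lambda_\alpha(u)\,du}$ is $C^1$ with compactly supported $\xi$-sections since $\Lambda_\alpha$ is bounded and continuous, and the extension of \eqref{eq:weak formPDE} to time-dependent test functions follows by splitting the difference quotient into a $\partial_s\psi$ part (handled by the uniform convergence of the quotient and the $W_1$-continuity of $f_\alpha$) and a frozen-test-function part (handled by the Definition itself). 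It is worth noting that the paper's argument also implicitly uses the equivalence between weak solutions and fixed points of $\mathcal{T}$ when claiming uniqueness of weak solutions, so the duality step you flag as the main obstacle is not an artifact of your approach but a point your write-up makes explicit while the paper passes over it quickly.
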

\begin{proof}
 We can rewrite equation \eqref{eq:weak formPDE} in fixed point form as follows. 
We use the notation $Y:=C([0, T], \mathcal M_{+,b}(\mathbb R_{+} ) )^{|X| } $ where $Y$ is endowed with the metric induced by the distance 
\[
d_T(f,g)= \sum_{\alpha \in X} \sup_{t \in [0, T] } W_1( f_\alpha (t, \cdot) g_\alpha (t, \cdot) ).
\]

Given a $f=( f_\alpha ) \in Y$ we define the operator $\mathcal T [f](t) : C_c(\mathbb R_+) \rightarrow \mathbb R_+^{|X|} $ as 
\begin{align} 
&\langle \mathcal T [f] (t) , \varphi \rangle_\alpha  := \int_0^t  \varphi(t- \xi)  e^{- \int_0^{t- \xi} \Lambda_\alpha (v) dv } \sum_{\beta \in X \setminus \{ \alpha\} } \int_{\mathbb R_+} \lambda_{\beta \alpha } (\eta) f_\beta (\xi, d \eta ) d \xi \nonumber \\
&
+  \int_{\mathbb R_+} f_\alpha^0(d\xi) \varphi(\xi+t) e^{- \int_\xi^{t+\xi} \Lambda_\alpha (v) dv }  \label{eq:fixed point}  
\end{align} 
where we are using the notation $\langle \cdot, \cdot \rangle_\alpha $ to indicate the $\alpha$-th component of $\mathcal T[f] (t) $ applied to $\varphi$. 

We define
$\| f \|_Y :=\sum_{\alpha \in X} \sup_{t \in [0, T]} \| f_\alpha (t, \cdot ) \|_{TV}$
and the set $\mathcal X_T \subset Y $ as 
\[
\mathcal X_T := \left\{  f \in Y : (f(0))_\alpha=f_\alpha^0 \  \text{ for every } \alpha \in X, \ ||f ||_Y \leq 1 + \sum_{\alpha \in X} \| f^0_\alpha \|_{TV} \right\}. 
\]
For every $f \in \mathcal X_T$ each component of the operator $\mathcal T[f](t) $ is a linear, positive and continuous operator from $C_c(\mathbb R_+) $ to $\mathbb R$ and, hence, can be identified with an element of $\mathcal M_{+, b} (\mathbb R_+).$

Using the bound for the parameters $\lambda_{\alpha \beta } $, we deduce that, for every $f \in \mathcal X_T$, we have that $\mathcal T [f] \in \mathcal X_T$ for sufficiently small values of $T$. 
Similarly, for sufficiently small values of $T$, the operator $\mathcal T $ is a contraction and hence Banach fixed point Theorem implies that there exists a unique fixed point $f \in \mathcal X_T$. 

Since the fixed point $f$ of $\mathcal T $ satisfies $\mathcal T [f](t) = f (t) $ the map $ t \mapsto \langle f(t, \cdot) , \varphi \rangle_\alpha =\langle \mathcal T[f] (t), \varphi \rangle_\alpha  $ is differentiable if $\varphi \in C^1_c(\mathbb R_+)$. 
Hence, the fixed point of $\mathcal T $ is a solution to \eqref{eq:weak formPDE} for $T$ small enough. 
By linearity $\| f \|_Y$ stays finite for every $t>0.$
We can then iterate the argument and prove the existence and uniqueness of a solution to \eqref{eq:weak formPDE} for an arbitrary $T>0$. 

To prove \eqref{exponential bound f_alpha} we consider the test function $ \varphi_{\varepsilon, R}(\xi)= \chi_{R, \varepsilon } (\xi)e^{\int_0^\xi \Lambda_\alpha (v) dv}  $ in equation \eqref{eq:fixed point}. 
Here $\chi_{R, \varepsilon}$ is a smooth decreasing function such that $\chi_{R, \varepsilon} (\xi) = 0 $ for $\xi > R+ \varepsilon$ and  $\chi_{R, \varepsilon} (\xi) = 1 $ for $\xi \leq R $.  
We deduce that 
\begin{align*}
 &\int_{\mathbb R_+} \chi_{R,\varepsilon}(\xi)  e^{ \int_0^\xi \Lambda_\alpha (v) dv } f_\alpha(t, d\xi)   \leq  \int_0^t  \chi_{R,\varepsilon}(t- \xi)  \sum_{\beta \in X \setminus \{ \alpha\} } \int_{\mathbb R_+} \lambda_{\beta \alpha } (\eta) f_\beta (\xi, d \eta ) d \xi \\
 &+ c_1 f^0_\alpha (\mathbb R_+)  \leq C \sup_{t \in [0, T] } \sum_{ \beta \in X \setminus \{ \alpha \}} f_\beta (t, \mathbb R_+)   + c_1 f^0_\alpha (\mathbb R_+) 
\end{align*}
where the positive constants $c_1, C$ do not depend on $\varepsilon$ and $R$. 
Sending $\varepsilon$ to zero and $R$ to infinity concludes the proof. 
\end{proof}
Since we are assuming that each of the compartments $\alpha \in X $ has at most one entrance point, we can consider the simplified system for $\{N_\alpha, B_\alpha, D_\alpha\} $, cf. \eqref{eq:IntroConcentrationOneEntrance},  \eqref{eq:IntroRE one entrance}, \eqref{eq:IntroConcentrationOneOutgoing}. 
In the following two theorems, we study the equivalence between \eqref{eq:fl}, \eqref{eq:f2}, \eqref{eq:f3} and \eqref{eq:IntroConcentrationOneEntrance},  \eqref{eq:IntroRE one entrance}, \eqref{eq:IntroConcentrationOneOutgoing}.

\begin{theorem}[RF and SP] \label{thm:RF to SP}
Under the assumptions of Lemma \ref{lem:sol PDE} consider the solution $\{ f_\alpha \} $ with $f_\alpha \in C([0, T] , \mathcal M_{+, b}(\mathbb R_{+}))$ to  \eqref{eq:fl}, \eqref{eq:f2}, \eqref{eq:f3}. 
    Then, the family of functions $\{ N_\alpha \}$, defined by
        \begin{align} \label{n function of f}
   &  N_\alpha (t)= \int_{\mathbb R_+}f_\alpha(t,d \xi), \quad t\geq 0 \quad \forall \alpha \in X,
    \end{align}
    satisfies \eqref{eq:Concentration}  where $\{B_\alpha, D_\alpha \}$ satisfy  \eqref{eq:IntroRE one entrance}, \eqref{eq:IntroConcentrationOneOutgoing} with response functions $\{\Phi_{\alpha \beta } \}$ and $\{k_\alpha \}$ given by
    \begin{align} \label{psi as a function of lambda}
    \Phi_{\alpha \beta } (t)= \lambda_{\alpha \beta}(t)e^{- \int_0^t \Lambda_\alpha(s) ds }   , \quad k_\alpha (t)= \Lambda_\alpha (t)e^{- \int_0^t \Lambda_\alpha(s) ds  } 
    \end{align}
and with 
      \begin{equation} \label{PDE RE:forcing functions}
     B^0_\alpha(t) = \sum_{\beta \in X \setminus \{ \alpha \} }   \int_{\mathbb R_- }  \Phi_{ \beta \alpha }(t-\xi) m_\beta ( d\xi),  \quad   D^0_\alpha(t) =  \int_{\mathbb R_-} k_\alpha (t-\xi)  m_\alpha ( d\xi). 
    \end{equation} 
    \end{theorem}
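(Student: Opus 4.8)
The plan is to read off the two scalar fluxes directly from the measure-valued solution and then feed the Duhamel representation \eqref{eq:fixed point} into the boundary condition \eqref{eq:f2} to generate the renewal equations. Concretely, I would set
\[
B_\alpha(t) := \sum_{\beta \in X \setminus \{\alpha\}} \int_{\R_+} \lambda_{\beta\alpha}(\eta)\, f_\beta(t, d\eta), \qquad D_\alpha(t) := \int_{\R_+} \Lambda_\alpha(\eta)\, f_\alpha(t, d\eta),
\]
so that $B_\alpha(t)$ is exactly the age-zero boundary flux of \eqref{eq:f2} and $D_\alpha(t)$ is the total loss rate out of $\alpha$. Both are non-negative and continuous (hence locally integrable) in $t$, since $f_\beta \in C([0,T];\mathcal{M}_{+,b}(\R_+))$ and $\lambda_{\beta\alpha},\Lambda_\alpha \in C_b(\R_+)$.

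First I would establish the concentration equation. Inserting into the weak formulation \eqref{eq:weak formPDE} a sequence of cut-offs $\chi_R \uparrow 1$ with $\chi_R \in C^1_c(\R_+)$, the transport term $\int_{\R_+}\chi_R'(\xi)\, f_\alpha(t, d\xi)$ vanishes as $R \to \infty$ because $f_\alpha(t,\cdot)$ is a finite measure whose tail mass tends to zero, while the two remaining terms converge by dominated convergence (using the boundedness of $\Lambda_\alpha$ and $\lambda_{\beta\alpha}$). This yields $\frac{d}{dt}N_\alpha(t) = -D_\alpha(t) + B_\alpha(t)$, which is exactly \eqref{eq:Concentration} in its scalar one-entrance form.

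The heart of the argument is the renewal equation for $B_\alpha$. The fixed-point identity \eqref{eq:fixed point}, which is just the method of characteristics for \eqref{eq:fl}, expresses $\langle f_\beta(t),\varphi\rangle_\beta$ as a boundary contribution plus an initial contribution; I would first extend it from $\varphi \in C_c(\R_+)$ to bounded continuous $\varphi$ by dominated convergence and then take $\varphi = \lambda_{\beta\alpha}$. In the boundary term the inner sum equals $B_\beta$, and recognizing $\lambda_{\beta\alpha}(t-\xi)\, e^{-\int_0^{t-\xi}\Lambda_\beta(s)\,ds} = \Phi_{\beta\alpha}(t-\xi)$ from \eqref{psi as a function of lambda} produces $\int_0^t \Phi_{\beta\alpha}(t-\xi)\, B_\beta(\xi)\, d\xi$. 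For the initial term I would unfold $f_\beta^0$ through \eqref{eq:f3}, change variables from age $\eta \geq 0$ to entry time $\xi = -\eta \in \R_-$, and combine the two survival factors $e^{-\int_{-\xi}^{t-\xi}\Lambda_\beta}\, e^{-\int_0^{-\xi}\Lambda_\beta} = e^{-\int_0^{t-\xi}\Lambda_\beta}$ into a single $\Phi_{\beta\alpha}(t-\xi)$, giving $\int_{\R_-}\Phi_{\beta\alpha}(t-\xi)\, m_\beta(d\xi)$. Summing $B_\alpha(t) = \sum_{\beta\neq\alpha}\int_{\R_+}\lambda_{\beta\alpha}\, f_\beta(t,\cdot)$ over $\beta$ then delivers \eqref{eq:IntroRE one entrance} with forcing $B_\alpha^0$ precisely as in \eqref{PDE RE:forcing functions}. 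Running the identical computation with $\varphi = \Lambda_\alpha$ and recognizing $k_\alpha(t-\xi) = \Lambda_\alpha(t-\xi)\, e^{-\int_0^{t-\xi}\Lambda_\alpha}$ gives \eqref{eq:IntroConcentrationOneOutgoing} with the stated $D_\alpha^0$.

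I expect the main obstacle to be the measure-theoretic bookkeeping in the initial-data term: correctly translating the initial measure $f_\beta^0$ (living on the age variable in $\R_+$) back to the measure $m_\beta$ on the entry-time axis $\R_-$, and verifying that the product of the two survival exponentials collapses to one kernel $\Phi_{\beta\alpha}$ evaluated at $t-\xi$. A secondary technical point is justifying the use of the non-compactly-supported test functions $1$, $\lambda_{\beta\alpha}$ and $\Lambda_\alpha$ in \eqref{eq:weak formPDE} and \eqref{eq:fixed point}; here the boundedness of $\lambda$ and $\Lambda$ together with the finiteness of $f_\beta(t,\cdot)$ (and, if needed, the moment bound \eqref{exponential bound f_alpha}) makes the approximation routine.
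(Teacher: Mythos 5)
Your proposal is correct and follows essentially the same route as the paper's proof: the paper likewise defines the fluxes as $S_\alpha(t)=\sum_{\beta\neq\alpha}\int_{\R_+}\lambda_{\beta\alpha}(\xi)\,f_\beta(t,d\xi)$ and $\int_{\R_+}\Lambda_\alpha(\xi)\,f_\alpha(t,d\xi)$, obtains the concentration equation from the weak formulation with $\varphi=1$ (justified by \eqref{exponential bound f_alpha}, playing the role of your cut-off argument), and derives the renewal equations by evaluating the fixed-point identity \eqref{eq:fixed point} at $\varphi=\lambda_{\beta\alpha}$ and $\varphi=\Lambda_\alpha$, with the initial-data term folded into $\Phi_{\beta\alpha}(t-\xi)$ and $m_\beta$ exactly as you describe.
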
 

\begin{proof}
From equation \eqref{eq:weak formPDE} we deduce that 
\begin{align*}
  \frac{d}{dt}  N_\alpha (t)=
  \frac{d}{dt} \int_{\mathbb R_+} f_\alpha (t, d\xi) =   - \int_{\mathbb R_+} \Lambda_\alpha (\xi ) f_\alpha(t, d\xi) + \sum_{\beta \in X \setminus \{\alpha \} } \int_{\mathbb R_+ } \lambda_{\beta \alpha } (\xi) f_\beta ( t, d \xi). 
\end{align*}
Notice that we can use the test function $\varphi =1 $ due to \eqref{exponential bound f_alpha}. 

The fixed point formulation of equation \eqref{eq:weak formPDE}, namely \eqref{eq:fixed point}, implies that
\begin{align} \label{eq:proof PDE then RE}
    &\int_{\mathbb R_+} \lambda_{\beta \alpha } (\xi)  f_\beta (t, d\xi) = \int_0^t \lambda_{\beta \alpha }(t-\xi) e^{- \int_0^{t-\xi} \Lambda_\beta (v) dv } \sum_{\gamma \in X \setminus \{ \beta \} } \int_{\mathbb R_+} \lambda_{\gamma \beta } (\eta) f_\gamma(\xi, d \eta)  d \xi \\
    & + \int_{\mathbb R_-}\lambda_{\beta \alpha } (t-\xi) e^{- \int_{0}^{t-\xi} \Lambda_\beta (v) dv} m_\beta (d \xi) \nonumber. 
\end{align}
Summing in the above equation over $\beta \in X \setminus \{ \alpha \} $ we deduce that 
\[\sum_{\beta \in X \setminus \{\alpha\} } \int_{\mathbb R_+ } \lambda_{\beta \alpha } (\xi) f_\beta ( t, d \xi)=:S_\alpha(t)
\]satisfies \eqref{eq:IntroRE one entrance} with kernel $\Phi_{\alpha \beta } $ as in \eqref{psi as a function of lambda} and with the forcing function $B^0_\alpha$ given by \eqref{PDE RE:forcing functions}.  
Similarly, taking the sum over $\alpha \in X \setminus \{ \beta \} $ we deduce that  
\[
\int_{\mathbb R_+} \Lambda_\alpha (\xi ) f_\alpha (t, d\xi) = \int_0^t k_\alpha ( t-\xi )  \sum_{\gamma \in X \setminus \{\alpha\} } \int_{\mathbb R_+} \lambda_{\gamma \alpha } (\eta) f_\gamma(\xi, d \eta) d\xi + D^0_\alpha (t), 
\]
where $k_\alpha $ is as in \eqref{psi as a function of lambda}. 
Hence, 
\[ 
D_\beta^0 (t) = \int_{\mathbb R_+} \Lambda_\beta (\xi) f_\beta(t, d\xi) 
\]
satisfies \eqref{eq:IntroConcentrationOneOutgoing}. This concludes the proof. 
\end{proof}


\begin{theorem}[RF and SP] \label{thm:SP to RF}
Let $\{N_\alpha , B_\alpha, D_\alpha \} $ satisfy \eqref{eq:IntroConcentrationOneEntrance}, \eqref{eq:IntroRE one entrance}, \eqref{eq:IntroConcentrationOneOutgoing} with response functions  $\Phi_{\alpha \beta}  \in C(\mathbb R_+; \mathbb R_+)$ and $k_\alpha  \in C(\mathbb R_+; \mathbb R_+)$ assume that $\{B_\alpha^0, D_\alpha^0 \}$ satisfy \eqref{PDE RE:forcing functions} where $\{ m_\alpha\} $ is a family of measures $m_\alpha \in \mathcal M_{+, b}(\mathbb R_+)$. 
Assume in addition that for every $\alpha,  \beta \in X $ it holds that the function 
\begin{equation}\label{lambda as a function of Phi}
\lambda_{\alpha \beta }(t):= \frac{\Phi_{\alpha \beta }(t) }{1- \sum_{\gamma \in X \setminus \{ \alpha\} } \int_0^t \Phi_{\alpha \gamma }(s) ds } \quad t \geq 0
\end{equation}
is continuous and bounded for every $\alpha, \beta $. 
Then the family of functions $\{ f_\alpha \} $ solving \eqref{eq:fl}, \eqref{eq:f2}, \eqref{eq:f3} satisfy \eqref{n function of f}.
    \end{theorem}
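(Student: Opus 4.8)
The plan is to treat this statement as the converse of Theorem~\ref{thm:RF to SP}. Given the response functions $\{\Phi_{\alpha\beta}, k_\alpha\}$ and the history measures $\{m_\alpha\}$, I would first use the rates $\lambda_{\alpha\beta}$ defined in \eqref{lambda as a function of Phi} to assemble the structured population system \eqref{eq:fl}, \eqref{eq:f2}, \eqref{eq:f3}, prescribing the initial profile $f_\alpha^0$ from $m_\alpha$ via \eqref{eq:f3}. Since $\lambda_{\alpha\beta}$ is assumed continuous and bounded and $m_\alpha\in\mathcal{M}_{+,b}(\mathbb{R}_+)$, Lemma~\ref{lem:sol PDE} furnishes a unique solution $\{f_\alpha\}$. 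I would then apply Theorem~\ref{thm:RF to SP} to this solution to obtain that the marginals $\tilde N_\alpha(t):=\int_{\mathbb{R}_+} f_\alpha(t,d\xi)$ solve a system of the form \eqref{eq:IntroConcentrationOneEntrance}, \eqref{eq:IntroRE one entrance}, \eqref{eq:IntroConcentrationOneOutgoing}, and finally argue that this system has exactly the same kernels and forcing terms as the one satisfied by the given $\{N_\alpha,B_\alpha,D_\alpha\}$, so that uniqueness forces $\tilde N_\alpha\equiv N_\alpha$, which is precisely \eqref{n function of f}.

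The first technical step is to verify that \eqref{lambda as a function of Phi} genuinely inverts \eqref{psi as a function of lambda}. Writing $\Lambda_\alpha=\sum_{\beta\neq\alpha}\lambda_{\alpha\beta}$ and setting $D_\alpha(t):=1-\sum_{\gamma\neq\alpha}\int_0^t\Phi_{\alpha\gamma}(s)\,ds$, one computes from \eqref{lambda as a function of Phi} that $D_\alpha'(t)=-\sum_{\gamma\neq\alpha}\Phi_{\alpha\gamma}(t)=-\Lambda_\alpha(t)D_\alpha(t)$ with $D_\alpha(0)=1$, whence $D_\alpha(t)=e^{-\int_0^t\Lambda_\alpha(s)\,ds}$. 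Substituting this back gives $\lambda_{\alpha\beta}(t)\,e^{-\int_0^t\Lambda_\alpha(s)\,ds}=\Phi_{\alpha\beta}(t)$ and $\Lambda_\alpha(t)\,e^{-\int_0^t\Lambda_\alpha(s)\,ds}=\sum_{\beta\neq\alpha}\Phi_{\alpha\beta}(t)=k_\alpha(t)$, the last equality using \eqref{kernel condition for conservation of numbers}. This shows that the response functions produced by Theorem~\ref{thm:RF to SP} out of the constructed $\{f_\alpha\}$, namely those in \eqref{psi as a function of lambda}, coincide exactly with the prescribed $\{\Phi_{\alpha\beta},k_\alpha\}$.

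It then remains to match the forcing terms. Theorem~\ref{thm:RF to SP} yields forcing functions $\tilde B_\alpha^0,\tilde D_\alpha^0$ given by \eqref{PDE RE:forcing functions} in terms of the same measures $\{m_\alpha\}$ used to build $f_\alpha^0$ and of the response functions just identified; by hypothesis the original $B_\alpha^0,D_\alpha^0$ satisfy \eqref{PDE RE:forcing functions} with these same $\{m_\alpha\}$, so the two sets of data agree. Consequently $\{\tilde N_\alpha,\tilde B_\alpha,\tilde D_\alpha\}$ and $\{N_\alpha,B_\alpha,D_\alpha\}$ solve \eqref{eq:IntroConcentrationOneEntrance}, \eqref{eq:IntroRE one entrance}, \eqref{eq:IntroConcentrationOneOutgoing} with identical kernels and forcing functions, and the uniqueness provided by Lemma~\ref{lem:Wellposedness RFE} (equivalently Lemma~\ref{lem:existence uniqueness and expo decay}) gives $\tilde N_\alpha\equiv N_\alpha$, establishing \eqref{n function of f}.

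The main obstacle I anticipate is not the algebra of the inversion but the global well-definedness of $\lambda_{\alpha\beta}$ on all of $\mathbb{R}_+$: the denominator $D_\alpha(t)$ is a survival probability that decays to zero whenever the conservation assumption holds, so the continuity and boundedness of \eqref{lambda as a function of Phi} is a genuine restriction rather than a formality, and it is exactly what is required to invoke Lemma~\ref{lem:sol PDE}. A secondary point demanding care is the bookkeeping of the history measures $m_\alpha$ on $\mathbb{R}_-$ against the initial profile $f_\alpha^0$ on $\mathbb{R}_+$ in \eqref{eq:f3}, making sure that the reflection $\xi\mapsto-\xi$ is handled consistently so that the forcing functions generated by Theorem~\ref{thm:RF to SP} reproduce \eqref{PDE RE:forcing functions} verbatim.
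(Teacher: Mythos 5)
Your proposal is correct and follows essentially the same route as the paper's proof: construct the SPE solution with rates $\lambda_{\alpha\beta}$ from \eqref{lambda as a function of Phi}, apply Theorem~\ref{thm:RF to SP} to its marginals, verify that the inversion identity $e^{-\int_0^t\Lambda_\alpha(v)\,dv}=1-\sum_{\gamma\neq\alpha}\int_0^t\Phi_{\alpha\gamma}(s)\,ds$ forces the recovered kernels and forcing functions to coincide with the given ones, and conclude by uniqueness. Your derivation of that identity via the ODE $D_\alpha'=-\Lambda_\alpha D_\alpha$ is just a repackaging of the paper's direct integration of the logarithmic derivative, and your explicit matching of $k_\alpha$ via \eqref{kernel condition for conservation of numbers} and of the forcing terms makes explicit what the paper leaves implicit.
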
 
    Before proving the theorem we remark that the continuity of $\lambda_{\alpha \beta } $ as in  \eqref{lambda as a function of Phi} holds if we assume that for every $\alpha ,\beta \in X $ the support of the function $\Phi_{\alpha \beta } $ is unbounded. Notice that this assumption is true for all the examples in Section \ref{sec:Examples}. 
    \begin{proof}
    Since $\{f_\alpha\}$ solves \eqref{eq:fl}, \eqref{eq:f2}, \eqref{eq:f3} by Theorem \ref{thm:RF to SP} the functions $\bar{N}_\alpha (t):= \int_{\R_+}f_\alpha(t,d\xi)$ solves \eqref{eq:IntroConcentrationOneEntrance}, \eqref{eq:IntroRE one entrance}, \eqref{eq:IntroConcentrationOneOutgoing} with the same initial conditions as $\{N_\alpha,B_\alpha,D_\alpha\}$ and some response functions $\{\bar{\Phi}_{\alpha\beta}\}$ given by  \eqref{psi as a function of lambda}. Since $\Lambda_\alpha$ is given by 
    \[
        \Lambda_\alpha (t)=  \frac{ \sum_{\beta \in X \setminus \{\alpha\}  }\Phi_{\alpha \beta }(t) }{1- \sum_{\gamma \in X \setminus \{\alpha\} } \int_0^t \Phi_{\alpha \gamma }(s) ds },
    \] 
    then we have that
\begin{align*}
\int_0^t \Lambda_\alpha (v) dv = \int_0^t \frac{ \sum_{\beta \in X \setminus \{\alpha\} }\Phi_{\alpha \beta }(v) }{1- \sum_{\gamma \in X \setminus \{\alpha\}  } \int_0^v \Phi_{\alpha \gamma }(s) ds }  dv= -  \ln \left( 1- \int_0^t \sum_{\beta \in X \setminus \{\alpha\} }\Phi_{\alpha \beta }(s) ds \right). 
\end{align*}
Hence,
\begin{align} \label{formula exp Lambda}
    e^{- \int_0^t \Lambda_\alpha (v) dv }  = 1- \int_0^t \sum_{\beta \in X \setminus \{\alpha\} }\Phi_{\alpha \beta }(s) ds.
\end{align} 
     By \eqref{psi as a function of lambda} we deduce that $\bar \Phi_{\alpha \beta } = \Phi_{\alpha \beta }$. 
    By uniqueness of the solution, we obtain $N_\alpha=\bar{N}_\alpha$. This concludes the proof.
    \end{proof}

\subsection{Generalized RFEs} \label{sec:PDEgeneralized}
Here, under suitable conditions on the forcing functions, we reformulate \eqref{eq:Concentration}, \eqref{eq:REInflux}, \eqref{eq:REOutflux} as a structured population model.
As in Section \ref{sec:PDEclassical}, we assume that the elements have age $\xi=0$ when they enter in the compartment. 
We assume that $ f_{\alpha j} (t, \xi) $ is the density of individuals that entered the compartment $\alpha $ with state $j \in\alpha $, and that have age $\xi $ at time $t$. 

 The evolution in time of $f_{\alpha j } $ is described by the following SPEs 
\begin{align}
  &  \partial_t f_{\alpha j } (t,\xi ) + \partial_\xi f_{\alpha j } (t, \xi )  = - M_{\alpha j}(\xi)  f_{\alpha j }  (t, \xi) \label{eq:gen1} \\
   & f_{\alpha j } ( t, 0)= \sum_{\beta \in X \setminus \{\alpha \} } \sum_{ k \in \beta }  \int_{\R_+} f_{\beta k } (t , \eta ) \mu_{kj}(\eta) d \eta \label{eq:gen2}\\
   & f_{\alpha j } ( 0, \xi ) = m_{\alpha j } (- \xi) e^{- \int_0^\xi M_{\alpha j } (v) dv } \label{eq:gen3}
\end{align}
where $M_{\alpha j} (\xi)= \sum_{\beta \in X \setminus \{\alpha \} } \sum_{\ell \in \beta } \mu_{j \ell } (\xi )$.

The transport term in \eqref{eq:gen1} is due to the aging of the elements in the compartments. Moreover, elements with "state-at-entrance" $j \in \alpha $ and age $\xi$ jump to another compartment with rate $M_{\alpha j } (\xi) $. 
The birth term in \eqref{eq:gen2} is due to elements that entered $\beta $ with any state and at any time in the past, that at time $t$ jump to state $j \in \alpha $. 
Finally $m_\alpha(-\xi)$ in \eqref{eq:gen1} is a vector whose $j$-th element is the density of elements with state $j \in \alpha $ at time $-\xi $.

Equation \eqref{eq:gen1}, \eqref{eq:gen2}, \eqref{eq:gen3} is a special case of \eqref{eq:fl}, \eqref{eq:f2}, \eqref{eq:f3}. 
Indeed, consider in \eqref{eq:fl}, \eqref{eq:f2}, \eqref{eq:f3} a set of compartments $X_1$ such that every compartment in $X_1$ is of the form $\{ j \} $ with $j \in \Omega $. Moreover, we assume that the compartments $\{ j\} $ with $ j \in \alpha$ with $\alpha \in X$, are not connected, which means that $\lambda_{ij} =0$ when $i, j \in \alpha $.
Then \eqref{eq:fl}, \eqref{eq:f2}, \eqref{eq:f3} for the set of compartments $X_1$ reduce to \eqref{eq:gen1}, \eqref{eq:gen2}, \eqref{eq:gen3} for the set of compartments $X$. 

We therefore define a solution of \eqref{eq:gen1}, \eqref{eq:gen2}, \eqref{eq:gen3} in analogy with Definition \ref{def:measure solution age=0}. 
\begin{definition}  \label{def:measure solution multiple entrance points} 
Let $\mu_{ij } \in C_b(\mathbb R_+ )$ be non-negative for every $i \in \alpha $, $ j \in \beta$ with $\alpha, \beta  \in X$.
Assume that $f_{\alpha j}^0 \in \mathcal M_{+, b} (\mathbb R_+)$ for every $j \in \alpha $ and every $\alpha \in X$. 
A family of functions $\{ f_{\alpha j} \}$ with $f_{\alpha j} \in C([0, T] ; \mathcal M_{+,b}(\mathbb R_+) ) $ is a solution of equation \eqref{eq:gen1}, with initial condition $f_{\alpha j } ^0(\cdot)$, if for every $\varphi \in C^1_c(\mathbb R_+) $  the map $t \mapsto \int_{\mathbb R_+} \varphi(\xi) f_{\alpha j} (t, d \xi)  $ is differentiable and 
\begin{align} \label{eq:weak formPDE gen}
 \frac{d}{dt}   \int_{\mathbb R_+} \varphi(\xi) f_{\alpha j } (t, d \xi) =&    \int_{\mathbb R_+} \left[ \varphi'(\xi) - M_{\alpha j }  (\xi) \varphi(\xi)  ) \right] f_{\alpha j } (t , d \xi) \\
 &+ \varphi(0) \sum_{\beta \in X \setminus \{ \alpha\} } \sum_{k \in \beta } \int_{\mathbb R_+} \mu _{k j } (\eta) f_{\beta k } (t, d\eta) \nonumber
\end{align}
for every $j \in \alpha $ with $\alpha \in X$. 
Furthermore $f_{\alpha j } (0, \cdot)=f_{\alpha j }^0(\cdot)$. 
\end{definition}

Moreover, as a consequence of Lemma \ref{lem:sol PDE} we have the existence of a unique solution for \eqref{eq:gen1}, \eqref{eq:gen2}, \eqref{eq:gen3}. 
\begin{lemma}\label{lem:gsol PDE}
Let $\mu_{ij } \in C_b(\mathbb R_+ )$ to be non-negative for every $i, j$. 
Let $m_{\alpha j }  \in \mathcal M_{+, b} (\mathbb R_+)$ for $\alpha \in X$.  
    Assume that $f_{\alpha j}^0$ is given by \eqref{eq:f3}. 
    Then there exists a unique solution $\{ f_{\alpha j } \} $ with $f_{ \alpha j }  \in C([0, T], \mathcal M_{+, b}(\mathbb R_+) ) $ for all $ j \in \alpha $ and $\alpha \in X$ to equation \eqref{eq:fl} in the sense of Definition \ref{def:measure solution multiple entrance points}. 
    The solution satisfies, for every $j \in \alpha $ and $\alpha \in X$ and every $t \geq 0$
    \begin{equation} \label{exponential bound f_alpha general}
\int_{\mathbb R_+} e^{ \int_0^{\xi} M_{ \alpha j }  (v) d v } f_{\alpha j }  (t, d \xi) < \infty, 
    \end{equation}
    where $M_{\alpha j }  (t):= \sum_{\beta \in X \setminus \{\alpha\} } \sum_{i \in \beta }  \mu_{ i j } (t)$. 
\end{lemma}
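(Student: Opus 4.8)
The plan is to deduce the statement directly from Lemma \ref{lem:sol PDE} by recognizing the system \eqref{eq:gen1}, \eqref{eq:gen2}, \eqref{eq:gen3} as a particular instance of the classical system \eqref{eq:fl}, \eqref{eq:f2}, \eqref{eq:f3}, exactly along the lines of the discussion preceding the lemma. Concretely, I would introduce the refined partition $X_1 := \{\{j\} : j \in \Omega\}$ in which every compartment is a singleton, and relabel the unknowns by identifying $f_{\alpha j}$ with the density $f_{\{j\}}$ attached to the singleton compartment $\{j\}$; the outer index $\alpha$ is redundant, since each state $j$ belongs to a unique compartment $\alpha \in X$. The whole point is that, after this relabeling, there is nothing analytic left to prove: all the work (the contraction argument, positivity, and the exponential moment estimate) has already been done in Lemma \ref{lem:sol PDE}.

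Under this identification I would match the data of the two problems: the transition rate $\mu_{kj}$ plays the role of $\lambda_{\{k\}\{j\}}$, the initial measure $m_{\alpha j}$ plays the role of the classical datum $m_{\{j\}}$, and the birth term in \eqref{eq:gen2} coincides with the one in \eqref{eq:f2} once the inner sum runs over states $k \in \beta$ with $\beta \neq \alpha$. The decisive compatibility to check is that the loss rates agree: imposing $\lambda_{ij} = 0$ whenever $i, j$ lie in the same original compartment $\alpha$ (so that singletons inside a common $\alpha$ are disconnected), the classical total rate $\Lambda_{\{j\}} = \sum_{\{i\}: i \neq j} \lambda_{\{j\}\{i\}}$ reduces precisely to $M_{\alpha j} = \sum_{\beta \in X \setminus \{\alpha\}} \sum_{\ell \in \beta} \mu_{j\ell}$, and the initial condition \eqref{eq:gen3} becomes \eqref{eq:f3}. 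Hence the weak formulation \eqref{eq:weak formPDE gen} of Definition \ref{def:measure solution multiple entrance points} is literally the weak formulation \eqref{eq:weak formPDE} of Definition \ref{def:measure solution age=0} applied to the partition $X_1$.

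With the reduction in place I would simply invoke Lemma \ref{lem:sol PDE}: since $\mu_{ij} \in C_b(\R_+)$ is non-negative and each $m_{\alpha j} \in \mathcal{M}_{+,b}(\R_+)$, the hypotheses hold for the relabeled system, so there is a unique solution $\{f_{\{j\}}\} \subset C([0,T]; \mathcal{M}_{+,b}(\R_+))$, and translating back yields the unique $\{f_{\alpha j}\}$. The exponential moment bound \eqref{exponential bound f_alpha general} is exactly \eqref{exponential bound f_alpha} written for the compartment $\{j\}$, because $\Lambda_{\{j\}} = M_{\alpha j}$. I expect the only point genuinely requiring care to be the bookkeeping of the relabeling — in particular verifying that the disconnection condition $\lambda_{ij} = 0$ for $i, j \in \alpha$ is precisely what makes the two loss rates coincide and that no spurious within-compartment birth terms are introduced — whereas the analytic content is inherited verbatim from Lemma \ref{lem:sol PDE} and needs no repetition.
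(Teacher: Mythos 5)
Your proposal is correct and takes essentially the same route as the paper: the paper deduces the lemma as a direct consequence of Lemma \ref{lem:sol PDE}, having observed in the paragraph preceding Definition \ref{def:measure solution multiple entrance points} that \eqref{eq:gen1}, \eqref{eq:gen2}, \eqref{eq:gen3} is the special case of \eqref{eq:fl}, \eqref{eq:f2}, \eqref{eq:f3} for the singleton partition $X_1$ with $\lambda_{ij}=0$ whenever $i,j$ lie in the same compartment of $X$ — exactly your reduction. The compatibility checks you single out (the identification $\Lambda_{\{j\}}=M_{\alpha j}$ and the absence of within-compartment birth terms, both enforced by the disconnection condition) are precisely the content of that reduction, so nothing is missing.
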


In the following two theorems, we study the equivalence between \eqref{eq:gen1}, \eqref{eq:gen2}, \eqref{eq:gen3} and \eqref{eq:Concentration}, \eqref{eq:REInflux}, \eqref{eq:REOutflux}. 

\begin{theorem}[RF and SP] \label{thm:RF to SP gen}
Under the assumptions of Lemma \ref{lem:gsol PDE} consider the solution $\{ f_{\alpha j }  \} $ with $f_{\alpha, j }  \in C([0, T] , \mathcal M_{+, b}(\mathbb R_{+}))$ to  \eqref{eq:gen1}, \eqref{eq:gen2}, \eqref{eq:gen3}. 
    Then, the family of functions $\{ N_\alpha \}$, defined by
        \begin{align} \label{n function of f general}
   &  N_\alpha (t)= \sum_{j \in \alpha } \int_{\mathbb R_+}f_{\alpha j } (t,d \xi), \quad t\geq 0, \quad \forall \alpha \in X,
    \end{align}
    satisfies \eqref{eq:Concentration}. The corresponding fluxes $ \{ S_\alpha \}$ and $ \{J_\alpha \} $ satisfy \eqref{eq:REInflux} and \eqref{eq:REOutflux} with response functions given by 
    \begin{equation}\label{response function gen}
   ( G_{\alpha \beta }(t) )_{j k }= \mu_{kj } (t) e^{-\int_0^t M_{\alpha k } (v) dv }
    \end{equation}
    for $k \in \alpha , j \in \beta$ and
    \begin{equation} \label{response function gen out}
   (  K_\alpha (t) )_{ii } = M_{\alpha i } (t) e^{- \int_0^t M_{\alpha i } (s ) ds }, \quad    (  K_\alpha (t) )_{ij } = 0
    \end{equation}
    for $i, \, j \in \alpha $. The corresponding forcing functions have the form
    \begin{equation} \label{forcing function gen}
    S^0_\alpha (t)= \sum_{\beta \in X \setminus \{\alpha  \}} \int_{\mathbb R_-} G_{\beta \alpha } (t-s) m_\beta (d s ), \quad J^0_\alpha (t)=  \int_{\mathbb R_-} K_{ \alpha } (t-s) m_\alpha  (d s ).
    \end{equation}
    Here, we write $m_\alpha\in \R^{|\alpha|}$, $(m_\alpha )_j = m_{\alpha j } $ for every $j \in \alpha $, $\alpha \in X$. 
    \end{theorem}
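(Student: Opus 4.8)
The plan is to deduce Theorem \ref{thm:RF to SP gen} directly from the already-established classical case, Theorem \ref{thm:RF to SP}, by exploiting the observation recorded just before Definition \ref{def:measure solution multiple entrance points}: the generalized SPE \eqref{eq:gen1}--\eqref{eq:gen3} is precisely the classical SPE \eqref{eq:fl}--\eqref{eq:f3} for the refined partition into singletons $X_1=\{\{j\}:j\in\Omega\}$, with transition rates $\lambda_{kj}:=\mu_{kj}$ and with $\lambda_{kj}=0$ whenever $k,j$ lie in the same compartment $\alpha\in X$. First I would record the dictionary between the two formulations: the density $f_{\alpha j}$ corresponds to $f_{\{j\}}$; each singleton trivially has at most one entrance point; and, because there are no intra-$\alpha$ transitions, the classical survival rate $\Lambda_{\{k\}}=\sum_{\{\ell\}\neq\{k\}}\lambda_{k\ell}$ collapses to $\sum_{\beta\neq\gamma}\sum_{\ell\in\beta}\mu_{k\ell}=M_{\gamma k}$ for $k\in\gamma$. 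The integrability needed to justify testing with $\varphi\equiv1$ is furnished by \eqref{exponential bound f_alpha general} from Lemma \ref{lem:gsol PDE}, so no new analytic estimate is required.

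Next I would apply Theorem \ref{thm:RF to SP} to this singleton partition. It yields, for each state $j\in\alpha$, scalar quantities $\tilde N_{\{j\}}(t)=\int_{\R_+}f_{\alpha j}(t,d\xi)$, $B_{\{j\}}$ and $D_{\{j\}}$ solving the one-entrance RFEs \eqref{eq:IntroConcentrationOneEntrance}--\eqref{eq:IntroRE one entrance}--\eqref{eq:IntroConcentrationOneOutgoing}, with scalar kernels $\Phi_{\{k\}\{j\}}(t)=\mu_{kj}(t)e^{-\int_0^t M_{\gamma k}(v)\,dv}$ for $k\in\gamma$ and $k_{\{j\}}(t)=M_{\alpha j}(t)e^{-\int_0^t M_{\alpha j}(v)\,dv}$ for $j\in\alpha$, and with forcing functions given by \eqref{PDE RE:forcing functions} using the singleton initial measures $m_{\{k\}}=m_{\gamma k}$.

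The remaining step is purely organizational. I would define the vector fluxes by $(S_\alpha)_j:=B_{\{j\}}$ and $(J_\alpha)_j:=D_{\{j\}}$ for $j\in\alpha$, which match the boundary data $f_{\alpha j}(t,0)=(S_\alpha)_j$ from \eqref{eq:gen2}. Rewriting the scalar renewal equation for $B_{\{j\}}$ by splitting the sum over $\{k\}\neq\{j\}$ according to the compartment $\gamma\ni k$, the intra-$\alpha$ terms vanish since $\mu_{kj}=0$ there, and what survives is exactly the $j$-th component of \eqref{eq:REInflux} once one identifies $(G_{\gamma\alpha}(t))_{jk}=\mu_{kj}(t)e^{-\int_0^t M_{\gamma k}(v)\,dv}$, i.e. \eqref{response function gen}. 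Likewise the scalar equation for $D_{\{j\}}$ becomes the $j$-th component of \eqref{eq:REOutflux} with the diagonal kernel \eqref{response function gen out}, and the singleton forcing functions reassemble into \eqref{forcing function gen} via $(m_\alpha)_j=m_{\alpha j}$. Finally, summing the scalar identity $\tfrac{d}{dt}\tilde N_{\{j\}}=B_{\{j\}}-D_{\{j\}}$ over $j\in\alpha$ and using $N_\alpha=\sum_{j\in\alpha}\tilde N_{\{j\}}$ gives $\tfrac{d}{dt}N_\alpha=\textbf{e}_\alpha^\top S_\alpha-\textbf{e}_\alpha^\top J_\alpha$, which is \eqref{eq:Concentration}.

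Since every analytic input is inherited from Theorem \ref{thm:RF to SP}, I do not expect a genuine obstacle; the one thing demanding care is the index bookkeeping. In particular one must check that the exponential weight is governed by $M_{\gamma k}$ attached to the \emph{source} compartment of state $k$, that the vanishing of $\mu_{kj}$ within a compartment is exactly what makes the singleton survival rate agree with $M_{\gamma k}$ and deletes the spurious intra-compartment convolution terms, and that the transpose-index convention in \eqref{response function gen} (the kernel $G_{\alpha\beta}$ carries the rate $\mu_{kj}$ from $k\in\alpha$ to $j\in\beta$) is tracked consistently against the summation order in \eqref{eq:REInflux}.
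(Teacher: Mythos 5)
Your proof is correct, but it follows a genuinely different route from the paper's. The paper proves the theorem by re-running, in the vector-valued setting, the computation used for Theorem \ref{thm:RF to SP}: it differentiates $N_\alpha$ via the weak formulation \eqref{eq:weak formPDE gen}, defines the fluxes explicitly by $(S_\alpha(t))_j=\sum_{\beta\in X\setminus\{\alpha\}}\sum_{k\in\beta}\int_{\R_+}\mu_{kj}(\xi)\,f_{\beta k}(t,d\xi)$ and $(J_\alpha(t))_j=\int_{\R_+}M_{\alpha j}(\xi)\,f_{\alpha j}(t,d\xi)$, and then argues ``as in the proof of Theorem \ref{thm:RF to SP}'' (i.e.\ via the fixed-point formulation \eqref{eq:fixed point}) that these satisfy \eqref{eq:REInflux}--\eqref{eq:REOutflux} with the kernels \eqref{response function gen}--\eqref{response function gen out}. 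You instead exploit the observation recorded in Section \ref{sec:PDEgeneralized} but never used in the paper's proof of this theorem, namely that \eqref{eq:gen1}--\eqref{eq:gen3} is exactly the classical SPE \eqref{eq:fl}--\eqref{eq:f3} for the singleton partition $X_1$ with intra-compartment rates set to zero, and you then apply Theorem \ref{thm:RF to SP} as a black box to that refined partition before reassembling the scalar quantities into vector form. The identification of the two solutions is justified by uniqueness (Lemma \ref{lem:sol PDE} / Lemma \ref{lem:gsol PDE}), which is the same device the paper uses to obtain Lemma \ref{lem:gsol PDE} itself, so no new analytic input is needed; your bookkeeping (the exponent governed by the source-compartment rate $M_{\gamma k}$, the deletion of intra-compartment convolution terms, and the transposed index convention in \eqref{response function gen} against the matrix product in \eqref{eq:REInflux}) is exactly the delicate part, and it checks out. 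What your reduction buys is logical economy: the vector case becomes a literal corollary of the scalar theorem, and the paper's informal ``as in the proof of\dots'' step is replaced by a citation of a proven result. What the paper's direct computation buys is the explicit representation of $S_\alpha$ and $J_\alpha$ as integrals of the age densities against the rates, which your argument recovers only through the side remark identifying $(S_\alpha)_j$ with the boundary flux of $f_{\alpha j}$; this representation is not part of the statement, so its absence is not a gap.
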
 
\begin{proof}
We have that 
  \begin{align*}
      \frac{d}{dt} N_\alpha (t)= - \sum_{j \in \alpha } \int_{\mathbb R_+} M_{\alpha j} (\xi)  f_{\alpha j }(t, d\xi) + \sum_{j \in \alpha } \sum_{\beta \in X \setminus \{ \alpha \}} \sum_{ k \in \beta } \int_{\mathbb R+} \mu_{k j }(\xi)  f_{\beta k } (t, d \xi).  
  \end{align*}
As in the proof of Theorem \ref{thm:RF to SP} one can show that the function $t \mapsto S_\alpha (t) $ with $S_\alpha (t)\in \mathbb R^{|\alpha |}$ given by
\[
(S_\alpha (t) )_j = \sum_{\beta \in X \setminus \{ \alpha \}} \sum_{ k \in \beta }  \int_{\mathbb R_+} \mu_{kj } (\xi) f_{\beta k } (t, d \xi)
\]
satisfies \eqref{eq:REInflux} with response function given by \eqref{response function gen} and with forcing function given by \eqref{forcing function gen}. 
Similarly, $J_\alpha (t) \in \mathbb R_+^{|\alpha |}$ given by 
\[
(J_\alpha (t))_j = \int_{\mathbb R_+} M_{\alpha j } (\xi) f_{\alpha j } (t, d\xi)
\]
satisfy \eqref{eq:REOutflux} with response function \eqref{response function gen out} and forcing function \eqref{forcing function gen}. 
\end{proof}
\begin{theorem}[RF and SP] \label{thm:SP to RF gen}
Let $\{N_\alpha , S_\alpha, J_\alpha \} $ satisfy \eqref{eq:Concentration}, \eqref{eq:REInflux}, \eqref{eq:REOutflux} with response functions $G_{\alpha \beta}  \in C(\mathbb R_+; \mathbb R_+^{|\beta|\times |\alpha|})$ and $K_\alpha  \in C(\mathbb R_+; \mathbb R_+^{|\alpha|\times |\alpha |})$ assume that $\{S_\alpha^0, J_\alpha^0 \}$ satisfy \eqref{PDE RE:forcing functions} where $\{ m_{\alpha j } \} $ is a family of measures $m_{\alpha j} \in \mathcal M_{+, b}(\mathbb R_+)$. 
Assume in addition that for every $i \in \alpha $ and $j \in \beta $ with $\alpha,  \beta \in X $ the function 
\begin{equation}\label{lambda as a function of g}
\mu_{ij  }(t):= \frac{(G_{\alpha \beta }(t))_{ji} }{1- \sum_{\gamma \in X \setminus \{ \alpha\} } \sum_{k \in \gamma } \int_0^t (G_{\alpha \gamma }(s))_{ki } ds } \quad t \geq 0
\end{equation}
is continuous and bounded. 
Then the family of functions $\{ f_{\alpha i}  \} $ solving \eqref{eq:gen1}, \eqref{eq:gen2}, \eqref{eq:gen3} satisfies \eqref{n function of f general}.
    \end{theorem}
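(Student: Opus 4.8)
The plan is to mirror the proof of Theorem~\ref{thm:SP to RF}, replacing the scalar response functions by their matrix-valued analogues. First I would define the rates $\mu_{ij}$ through \eqref{lambda as a function of g}; by hypothesis these are continuous and bounded, so Lemma~\ref{lem:gsol PDE} furnishes a unique solution $\{f_{\alpha i}\}$ of \eqref{eq:gen1}, \eqref{eq:gen2}, \eqref{eq:gen3} with initial data built from the measures $\{m_{\alpha j}\}$. Setting $\bar N_\alpha(t):=\sum_{j\in\alpha}\int_{\R_+}f_{\alpha j}(t,d\xi)$ and invoking Theorem~\ref{thm:RF to SP gen}, the triple $\{\bar N_\alpha,\bar S_\alpha,\bar J_\alpha\}$ solves \eqref{eq:Concentration}, \eqref{eq:REInflux}, \eqref{eq:REOutflux} with kernels $\bar G_{\alpha\beta}$, $\bar K_\alpha$ given by \eqref{response function gen}, \eqref{response function gen out} and forcing functions \eqref{forcing function gen}. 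The goal is then to show that these derived kernels and forcing terms coincide with the given data, so that uniqueness forces $\bar N_\alpha=N_\alpha$, which is precisely \eqref{n function of f general}.

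The heart of the argument is the identity $\bar G_{\alpha\beta}=G_{\alpha\beta}$, the matrix counterpart of \eqref{formula exp Lambda}. For fixed $k\in\alpha$ I would set $\sigma_k(t):=\sum_{\gamma\in X\setminus\{\alpha\}}\sum_{m\in\gamma}\int_0^t (G_{\alpha\gamma}(s))_{mk}\,ds$, so that $\sigma_k(0)=0$. Summing \eqref{lambda as a function of g} over the indices occurring in the definition of $M_{\alpha k}$ gives $M_{\alpha k}(t)=\sum_{\beta\in X\setminus\{\alpha\}}\sum_{\ell\in\beta}\mu_{k\ell}(t)=\sigma_k'(t)/(1-\sigma_k(t))=-\tfrac{d}{dt}\ln(1-\sigma_k(t))$, whence $e^{-\int_0^t M_{\alpha k}(v)\,dv}=1-\sigma_k(t)$. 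Substituting this into \eqref{response function gen} cancels exactly the denominator of $\mu_{kj}$ and yields $(\bar G_{\alpha\beta}(t))_{jk}=\mu_{kj}(t)(1-\sigma_k(t))=(G_{\alpha\beta}(t))_{jk}$ for all $j\in\beta$, $k\in\alpha$. As a by-product $\bar S_\alpha^0=S_\alpha^0$, since both arise from the same measures $m_\beta$ and the now-identical kernels via \eqref{forcing function gen}.

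To transfer the equality to the concentrations I would not need full equality of the outflux kernels, only their column sums: $N_\alpha$ enters \eqref{eq:Concentration} through $\textbf{e}_\alpha^\top S_\alpha$ and $\textbf{e}_\alpha^\top J_\alpha$, and the latter involves $K_\alpha$ only as $\textbf{e}_\alpha^\top K_\alpha$. Using that $\bar K_\alpha$ is diagonal with $(\bar K_\alpha(t))_{ii}=M_{\alpha i}(t)(1-\sigma_i(t))=\sigma_i'(t)=\sum_{\gamma\in X\setminus\{\alpha\}}\sum_{m\in\gamma}(G_{\alpha\gamma}(t))_{mi}$, one gets $\textbf{e}_\alpha^\top\bar K_\alpha=\sum_{\gamma\in X\setminus\{\alpha\}}\textbf{e}_\gamma^\top G_{\alpha\gamma}$, which equals $\textbf{e}_\alpha^\top K_\alpha$ by the conservation relation of Lemma~\ref{lem:ConservationMassGenRFE}. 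Since $\bar G=G$ and $\bar S_\alpha^0=S_\alpha^0$, uniqueness of the renewal equation \eqref{eq:REInflux} (Lemma~\ref{lem:Wellposedness RFE}) gives $\bar S_\alpha=S_\alpha$; consequently $\textbf{e}_\alpha^\top\bar J_\alpha=\textbf{e}_\alpha^\top J_\alpha$, and as $\bar N_\alpha$ and $N_\alpha$ both solve the first-order equation \eqref{eq:Concentration} with the common initial datum $N_\alpha^0$, they coincide.

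The step I expect to be the main obstacle is the bookkeeping in the exponential identity: one must align the index conventions of \eqref{lambda as a function of g}, \eqref{response function gen} and the definition of $M_{\alpha k}$ so that the summed rate is exactly the logarithmic derivative of $1-\sigma_k$, and check $\sigma_k(0)=0$ together with the absolute continuity required to integrate $M_{\alpha k}=-\tfrac{d}{dt}\ln(1-\sigma_k)$ (guaranteed by continuity of the $G_{\alpha\gamma}$). The second delicate point is conceptual rather than computational, namely recognising that only $\textbf{e}_\alpha^\top K_\alpha$, and not the full matrix $K_\alpha$, governs the concentration, so that the diagonal $\bar K_\alpha$ produced by Theorem~\ref{thm:RF to SP gen} is reconciled with a possibly non-diagonal given $K_\alpha$ through conservation of mass.
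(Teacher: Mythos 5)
Your proposal follows essentially the same route as the paper's own proof: solve the SPE with the rates \eqref{lambda as a function of g}, invoke Theorem \ref{thm:RF to SP gen} to see that $\bar{N}_\alpha$ solves a generalized RFE, establish $\bar{G}_{\alpha\beta}=G_{\alpha\beta}$ through the identity $e^{-\int_0^t M_{\alpha i}(v)\,dv}=1-\sum_{\beta\in X\setminus\{\alpha\}}\sum_{k\in\beta}\int_0^t (G_{\alpha\beta}(s))_{ki}\,ds$ (your $\sigma_k$ computation is exactly the paper's logarithmic-derivative argument), and conclude by uniqueness.

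Where you go beyond the printed proof is your third paragraph. The paper stops at $\bar{G}_{\alpha\beta}=G_{\alpha\beta}$ and appeals to uniqueness, silently passing over the fact that Theorem \ref{thm:RF to SP gen} produces a \emph{diagonal} outflux kernel $\bar{K}_\alpha$, cf. \eqref{response function gen out}, while the given $K_\alpha$ may be an arbitrary non-negative matrix, so the two outflux equations \eqref{eq:REOutflux} are not literally the same. You correctly isolate what is actually needed for the conclusion \eqref{n function of f general} — equality of the column sums $\textbf{e}_\alpha^\top K_\alpha=\textbf{e}_\alpha^\top \bar{K}_\alpha$ — and your computation $\textbf{e}_\alpha^\top\bar{K}_\alpha=\sum_{\gamma\in X\setminus\{\alpha\}}\textbf{e}_\gamma^\top G_{\alpha\gamma}$ is right. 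The one caveat is that the identity $\textbf{e}_\alpha^\top K_\alpha=\sum_{\gamma\in X\setminus\{\alpha\}}\textbf{e}_\gamma^\top G_{\alpha\gamma}$ is the mass-conservation relation \eqref{eq:LemmaConservMassCondition}, which is a \emph{hypothesis} of Lemma \ref{lem:ConservationMassGenRFE}, not a consequence of the theorem's stated assumptions; it must be added to (or read into) the statement for your argument — and indeed for the paper's — to close. With that reading, your proof is complete and in fact more careful than the one in the paper.
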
 
    \begin{proof}
        Since $\{f_{\alpha j} \}$ solves \eqref{eq:gen1},\eqref{eq:gen2}, \eqref{eq:gen3} with coefficients $\mu_{ij}$ given by \eqref{lambda as a function of g} the functions $\bar{N}_\alpha (t):= \sum_{ j \in \alpha } \int_{\R_+}f_{\alpha j }(t,d\xi)$ satisfy \eqref{eq:Concentration}, \eqref{eq:REInflux}, \eqref{eq:REOutflux} with the same forcing function as $\{N_\alpha,S_\alpha,J_\alpha\}$ and some response functions $\{\bar{G}_{\alpha\beta}\}$ given by \eqref{forcing function gen}. Then
    \begin{align*}
        \int_0^t M_{\alpha i } (v) dv = \int_0^t \frac{ \sum_{\beta \in X \setminus \{\alpha\} } \sum_{j \in \beta } (G _{\alpha \beta }(v))_{ji }}{1- \sum_{\gamma \in X \setminus \{\alpha\}  }  \sum_{k \in \beta } \int_0^v (G_{\alpha \gamma }(s))_{ki }ds }  dv 
        = -  \ln \left( 1- \int_0^t \sum_{\beta \in X \setminus \{\alpha\} } \sum_{k \in \beta } (G _{\alpha \beta }(s))_{ki} ds \right). 
    \end{align*}
    Hence, $ e^{- \int_0^t M_{\alpha i } (v) dv }  = 1- \int_0^t \sum_{\beta \in X \setminus \{\alpha\} } \sum_{ k \in \beta } (G_{\alpha \beta }(s))_{ki } ds$. We deduce that $\bar G_{\alpha \beta } = G_{\alpha \beta }$. By uniqueness of the solution, we obtain $N_\alpha=\bar{N}_\alpha$. This concludes the proof.
    \end{proof}

\subsection{Initial conditions of ODEs compatible with a SPEs reformulation} \label{sec:ODE SPEs}

We now characterize the initial conditions of the ODE \eqref{eq:PrelDecompODE} that guarantee that the corresponding forcing function $S^0_\alpha $ is of the form \eqref{forcing function gen} for some $m_\alpha$. 
This allows to associate with these ODEs a SPE of the form \eqref{eq:gen1}, \eqref{eq:gen2}, \eqref{eq:gen3}. 
Indeed, starting from the ODEs \eqref{eq:PrelDecompODE} we can write a system of RFEs for $N_\alpha $, see Theorem \ref{thm:ReductionODEtoRE}. Then, if $S^0_\alpha $ is of the form \eqref{forcing function gen} we can associate to this RFEs a SPEs. 
With this procedure we can therefore, implicitly, associate to these ODEs an age structure. 


\begin{proposition}[ODEs and SPEs]\label{prop:age to ODEs}
Let $N_\alpha $ be given by \eqref{eq:DecompDefNumberComp} where $\{n_\alpha \}$ is the family of solutions to \eqref{eq:PrelDecompODE}. 
If for every $\beta \in X $ there exists a $m_\beta \in \left( \mathcal M_{+, b} (\mathbb R_+) \right)^{|\beta |}$ such that 
\begin{equation} \label{n_0 PDE}
n^0_\beta = \int_{\mathbb R_- } e^{- \xi A_{\beta \beta }}  m_\beta( d\xi) 
\end{equation} 
then $N_\alpha$ is given by \eqref{n function of f general}, where $\{ f_{\alpha j } \}$ is the solution to a SPE of the form \eqref{eq:gen1}, \eqref{eq:gen2}, \eqref{eq:gen3}.
\end{proposition}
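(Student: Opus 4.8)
The plan is to chain together the two equivalences already established in the paper: first use Theorem \ref{thm:ReductionODEtoRE} to rewrite the ODE-driven family $\{N_\alpha\}$ as a generalized RFE, and then invoke Theorem \ref{thm:SP to RF gen} to pass from that RFE to the SPE \eqref{eq:gen1}-\eqref{eq:gen2}-\eqref{eq:gen3}. The whole content of the proof reduces to checking that the forcing functions produced by the ODE reduction have exactly the convolution structure \eqref{forcing function gen} required by Theorem \ref{thm:SP to RF gen}, and that this structure appears precisely under the representation \eqref{n_0 PDE} of the initial data.

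Concretely, I would first apply Theorem \ref{thm:ReductionODEtoRE}: the family $\{N_\alpha\}$ solves \eqref{eq:Concentration}-\eqref{eq:REInflux}-\eqref{eq:REOutflux} with kernels $G_{\beta\alpha}(t)=A_{\alpha\beta}e^{tA_{\beta\beta}}$ and $K_\alpha$ as in \eqref{eq:ThmIntegralKernelOutflux}, and with forcing functions $S^0_\alpha(t)=\sum_{\beta\neq\alpha}G_{\beta\alpha}(t)n^0_\beta$ and $J^0_\alpha(t)=K_\alpha(t)n^0_\alpha$ given by \eqref{eq:ThmInOutFlux}. The one preliminary observation I need is that, since $C_\alpha$ is the diagonal matrix defined in Section \ref{sec:DecompGeneralSyst}, formula \eqref{eq:ThmIntegralKernelOutflux} can be written compactly as $K_\alpha(t)=C_\alpha e^{tA_{\alpha\alpha}}$, so that $K_\alpha$ is a fixed matrix multiplying the semigroup $e^{tA_{\alpha\alpha}}$.

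The key algebraic step is then to insert \eqref{n_0 PDE} into the forcing functions and use the semigroup law $e^{tA_{\beta\beta}}e^{-\xi A_{\beta\beta}}=e^{(t-\xi)A_{\beta\beta}}$. For the inflow this gives
\[
S^0_\alpha(t)=\sum_{\beta\neq\alpha}A_{\alpha\beta}e^{tA_{\beta\beta}}\int_{\mathbb R_-}e^{-\xi A_{\beta\beta}}m_\beta(d\xi)=\sum_{\beta\neq\alpha}\int_{\mathbb R_-}G_{\beta\alpha}(t-\xi)\,m_\beta(d\xi),
\]
and for the outflow, using $K_\alpha(t)=C_\alpha e^{tA_{\alpha\alpha}}$,
\[
J^0_\alpha(t)=C_\alpha e^{tA_{\alpha\alpha}}\int_{\mathbb R_-}e^{-\xi A_{\alpha\alpha}}m_\alpha(d\xi)=\int_{\mathbb R_-}K_\alpha(t-\xi)\,m_\alpha(d\xi).
\]
These are exactly the forcing functions of the form \eqref{forcing function gen}, which is the compatibility condition demanded by Theorem \ref{thm:SP to RF gen}.

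Finally, I would verify the regularity hypotheses of Theorem \ref{thm:SP to RF gen} and apply it. The kernels $G_{\alpha\beta}$ and $K_\alpha$ are analytic in $t$, and the denominator appearing in the definition \eqref{lambda as a function of g} of $\mu_{ij}$ can be computed, using $\textbf{e}_\alpha^\top E_{\alpha\alpha}=0$ and $\sum_{\gamma\neq\alpha}\textbf{e}_\gamma^\top A_{\gamma\alpha}=\textbf{e}_\alpha^\top C_\alpha$, to equal the survival probability $(\textbf{e}_\alpha^\top e^{tA_{\alpha\alpha}})_i$; since $e^{tA_{\alpha\alpha}}$ is positivity preserving this quantity is strictly positive and continuous, so each $\mu_{ij}$ is continuous, and it stays bounded because numerator and denominator decay at comparable exponential rates governed by the spectrum of $A_{\alpha\alpha}$. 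Granting this, Theorem \ref{thm:SP to RF gen} produces a solution $\{f_{\alpha j}\}$ of \eqref{eq:gen1}-\eqref{eq:gen2}-\eqref{eq:gen3} for which \eqref{n function of f general} holds, which is the assertion. I expect the only delicate point to be this last verification that $\mu_{ij}$ is genuinely bounded, that is, that the per-state exit rate does not blow up as the survival probability degenerates; I would therefore isolate that estimate and treat it as the main obstacle, the remaining steps being bookkeeping with the matrix exponential.
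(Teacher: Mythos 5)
Your proposal is correct and follows essentially the same route as the paper's own proof: apply Theorem \ref{thm:ReductionODEtoRE}, insert \eqref{n_0 PDE} into the forcing functions via the semigroup law, and conclude by the RFE-to-SPE equivalence (the paper cites Theorem \ref{thm:RF to SP gen} at this step, but the direction actually needed is that of Theorem \ref{thm:SP to RF gen}, which is the one you invoke). You are in fact somewhat more thorough than the paper, which only verifies the convolution form for $S^0_\alpha$, omits $J^0_\alpha$, and never checks the continuity/boundedness hypotheses on $\mu_{ij}$; the one point you flag as the remaining obstacle closes in one line: since $e^{tA_{\alpha\alpha}}$ has non-negative entries, $(G_{\alpha\beta}(t))_{ji}=\sum_{k\in\alpha}(A_{\beta\alpha})_{jk}\left( e^{tA_{\alpha\alpha}} \right)_{ki}\le\left(\max_{k\in\alpha}\lambda_{kj}\right)\,(\textbf{e}_\alpha^\top e^{tA_{\alpha\alpha}})_i$, and your computation identifies the right-hand factor with the denominator in \eqref{lambda as a function of g}, so $\mu_{ij}(t)\le\max_{k\in\alpha}\lambda_{kj}$ uniformly in $t$, while strict positivity of the denominator for finite $t$ follows from $(e^{tA_{\alpha\alpha}})_{ii}\ge e^{t(A_{\alpha\alpha})_{ii}}>0$.
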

\begin{proof}
 Theorem \ref{thm:ReductionODEtoRE} implies that $N_\alpha $ satisfies equation \eqref{eq:Concentration}. The flux $S_\alpha $ solves \eqref{eq:REInflux} with response functions $G_{\alpha \beta }(t)=A_{\beta \alpha} e^{t A_{\alpha \alpha }} $ and forcing functions 
\begin{align}\label{eq:ForcingFunctionAgeStructure}
  S^0_\alpha (t)= \sum_{\beta \in X \setminus \{ \alpha \}} G_{\beta \alpha } (t) n_\beta^0 =\sum_{\beta \in X \setminus \{ \alpha \}} A_{\alpha \beta } e^{t A_{\beta \beta } } n_\beta^0= \sum_{\beta \in X \setminus \{ \alpha \}} \int_{\mathbb R_-} 
 G_{\beta \alpha} (t-\xi) m_\beta(d\xi), 
\end{align}
where in the last equality we used \eqref{n_0 PDE}. 
As a consequence of \eqref{eq:ForcingFunctionAgeStructure} and Theorem \ref{thm:RF to SP gen} we can associate to the RFEs \eqref{eq:Concentration}, \eqref{eq:REInflux}, \eqref{eq:REOutflux} a system of SPEs of the form \eqref{eq:gen1}, \eqref{eq:gen2}, \eqref{eq:gen3}.
\end{proof}

As explained above, Proposition \ref{prop:age to ODEs} guarantees that if \eqref{n_0 PDE} holds, then we can introduce an age structure in the ODEs system.

\section{Specific examples and applications: linear case }\label{sec:Examples}
We now consider some examples of applications of the response function formalism in biochemistry. We consider first systems leading to linear problems. 
The examples that we study include the classical kinetic proofreading model introduced by Hopfield and Ninio (Section \ref{sec:proofreading}), a model of non-Markovian linear polymerization (Section \ref{sec:linear polimerization}), and a simple linear network inspired by the model of robust adaptation in \cite{barkai1997robustness}, (see Section \ref{sec:adaptation}). 

\subsection{Kinetic proofreading, Hopfield model} \label{sec:proofreading}
Our first application concerns a kinetic proofreading mechanism due to Hopfield \cite{Hopfield} and Ninio \cite{ninio1975kinetic}. 
The Hopfield model, or other mechanisms of kinetic proofreading inspired by that, have been found in many biological processes, including pathogen recognition from the immune system \cite{goldstein2004mathematical}, \cite{mckeithan1995kinetic}, DNA replication, m-RNA translation, DNA recognition and DNA transcription, see the review \cite{Boeger} and also \cite{Murugan,PigolottiSartori}.

The classical Hopfield model is a biochemical network of reactions of the form 
\begin{equation} \label{reaction:CS}
C \leftrightarrows S, \quad C \leftrightarrows S^*
\end{equation} 
\begin{equation} \label{reaction:SS}
S \ \overleftarrow{	\rightsquigarrow}  \ S^*
\end{equation} 
\begin{equation} \label{reaction:empty}
C \rightarrow \emptyset, \quad S^* \rightarrow P 
\end{equation} 
where we indicate with $ \leftrightarrows$ the reactions having detailed balance and with $ \overleftarrow{	\rightsquigarrow}$ the reaction that does not have detailed balance.
For more complex kinetic proofreading networks we refer to \cite{Murugan}, \cite{murugan2014discriminatory}. 

It is important to notice that the state denoted by $S$ is a complex that consists in the combination of the molecule denoted by $C$ with some receptor. 
In the formulation of \eqref{reaction:CS} and \eqref{reaction:SS} we assume that the number of receptors is very large and therefore the number of attachment points for $C$ can be assumed to be constant. 
On the other hand, $S^*$ is the phosphorylated state of $S$ and it synthesize the product $P$.
For example, $C$ could be a codon on the mRNA, $S$ the complex made of $C$ and a tRNA. The product $P$ would then be an amino acid. We refer to Chapter 7 in \cite{alon2019introduction} for more biological details.

We assume that $E(S)- E(C)=E_1 $ while $E(S^*) - E(C) = E_2 $ where $E(S) , E(S^*) , E(C) $ are the free energies of $S$, $S^*$ and $C$.
The affinity $E_1$ to be detected by the receptor can be different for another molecule $\overline C$, producing the complexes $\overline S $ and $\overline S^*$. 
In other words, the affinity $\overline E_1= E(\overline S) - E(\overline C) $ can be slightly different from $E_1 $. Specifically, if the circuit has a preference for the molecule $C$ over $\overline C$ we have $\overline E_1 > E_1$. 
However $E_1-E_2$ can be assumed to be constant, i.e.  $E_1-E_2= \overline E_1 - \overline E_2 $ where $\overline E_2= E(\overline S^*) - E( \overline S) $.
The reason why we can make this assumption is that the phosphorylation process takes place in a part of the molecule that is far from the part of the receptor where the molecule $C$ or $\overline C$ was attached.
The phosphorylation reaction \eqref{reaction:SS} does not have detailed balance due to the consumption of ATP (or energy) in an irreversible manner, see Figure \ref{fig:HopfieldGraph}.

The rate of the reaction $C \rightarrow S $ is $k$. Due to the detailed balance condition the rate of the reaction $S \rightarrow C$ is $k e^{E_1}$. Similarly the rate of the reaction $C \rightarrow S^* $ is $\beta $ and the one of the reaction $S^* \rightarrow C$ is $\beta e^{E_2} $. 
Finally the rate of the reaction $S \rightarrow S^*$ is $\alpha $. On the other hand, due to the lack of detailed balance, we will denote the rate of the reverse reaction $S^* \rightarrow S $ as $\alpha e^{E_2-E_1} /Q $, where $Q $ is a coefficient which measures the lack of detailed balance. If  $Q = 1$ the reaction \eqref{reaction:SS} has detailed balance.
On the contrary, if $Q>1$ energy is spent in an irrerversible manner to transform $S$ into $S^*$. 


\begin{figure}[ht]
	\centering
	\includegraphics[width=0.3\linewidth]{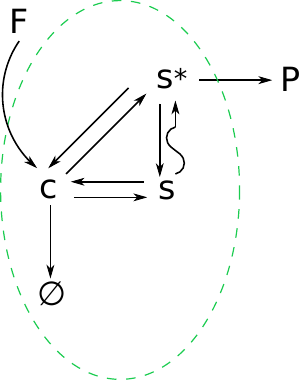}
	\caption{Reactions in the proofreading mechanism due to Hopfield.
 In the green dashed line we have the compartment $\{C, S, S^*, \emptyset \}$.}
	\label{fig:HopfieldGraph}
\end{figure}

More precisely, we consider the ODEs
\begin{align}\label{eq:FullHopfieldModel}
	\begin{split}
		\dfrac{dC}{dt} &= -(k+\beta+\mu)C+ke^{E_1} S +\beta e^{E_2} S^*+F(t)
		\\
		\dfrac{dS}{dt} &= kC-(ke^{E_1}+\alpha) S+\dfrac{\alpha}{Q}e^{E_2-E_1}S^*
		\\
		\dfrac{dS^*}{dt} &= \beta C+ \alpha S-\left(\dfrac{\alpha}{Q}e^{E_2-E_1}+\beta e^{E_2} + \lambda \right) S^*
		\\
		\dfrac{dP}{dt} &= \lambda S^*.
	\end{split}
\end{align}	
The equations for $\overline C, \overline S , \overline S^*$ are identical, except for the energies  $E_1$ and $E_2$ which are $\overline E_1$ and $\overline E_2 $. 
The term $F$ in \eqref{eq:FullHopfieldModel} is an external source of substance $C$.


Changing the time unit, we can assume without loss of generality that $ k=1 $ in \eqref{eq:FullHopfieldModel}.
The effective behaviour of the reactions $ C\to P $, $ \bar{C}\to P $ can be described via the corresponding response functions $ \Phi,\, \bar{\Phi} $.
The response functions $\Phi $ and $\overline \Phi$ can be obtained taking $F(t)=\delta_0(t)$.


Using the linearity and invariance under time translations of \eqref{eq:FullHopfieldModel} we can rewrite the total production until the time $t$ in the form 
\[
\frac{d}{dt}P(t)= \int_{-\infty }^t \Phi(t-s) F(s) ds 
\]
where $\Phi(t) = \lambda S^*(t) $, where $S^*$ is computed solving \eqref{eq:FullHopfieldModel} with initial value $C(-\infty ) = S(-\infty ) = S^*(-\infty )=0$ and $F(t)=\delta_0(t)$. 
This is equivalent to solving \eqref{eq:FullHopfieldModel} for $t>0$ with initial conditions $ \bar{C}(0)=1 $, $ \bar{S}(0)=\bar{S}^*(0)=0 $, $P(0)=0$ respectively. 

We will denote with $P_\infty  $ the total quantity of product generated upon excitation by a signal $F(t)=\delta_0(t)$ of the molecule $C$ and with $\overline P_\infty $ the same quantity produced by a signal of the molecule $\overline C$. 
Then, the total production ratio is given by
\begin{align*}
\frac{\overline P_\infty }{P_\infty }= \dfrac{\int_0^\infty\bar{\Phi}(t)\, dt}{\int_0^\infty\Phi(t)\, dt} = \dfrac{\hat{\bar{S}}^*(0)}{\hat{S}^*(0)},
\end{align*}
where $ \hat{\bar{S}}^*, \, \hat{S}^* $ are the Laplace transforms of $ \bar{S}^*, \, S^* $. 
Taking the Laplace transform of \eqref{eq:FullHopfieldModel}, evaluating $ z=0 $ and solving the linear system leads to
\begin{align*} 
\frac{\overline P_\infty }{P_\infty }=&\theta^2 \, \dfrac{1+\beta+\beta \xi/\alpha\theta}{1+\beta+\beta\xi/\alpha} \, \dfrac{\xi^2+\xi((\beta+\mu)\lambda/\mu\beta\eta +\alpha/\beta Q+\alpha)+(1+\beta+\mu)\alpha\lambda/\mu\beta\eta}{\xi^2+\theta\xi((\beta+\mu)\lambda/\mu\beta\eta +\alpha/\beta Q+\alpha)+\theta^2(1+\beta+\mu)\alpha\lambda/\mu\beta\eta}
	\\
	\theta :=& e^{-(\bar{E}_1-E_1)}, \quad \xi := e^{E_1}, \quad \eta:= e^{E_2-E_1}= e^{\bar{E}_2-\bar{E}_1}.
\end{align*}
Note that $ \theta \in (0,1) $. Consequently, the above expression is greater than $ \theta^2=e^{-2(\bar{E}_1-E_1)}  $.
Let us mention that the optimal discrimination $ \theta^2 $ agrees with the one originally obtained by  Hopfield \cite{Hopfield} for constant flux solutions (i.e. assuming that $C$ or $\overline C$ are constant in time, hence ignoring the first equation in \eqref{eq:FullHopfieldModel}).  

The value $ e^{-2(\bar{E}_1-E_1)}  $ is achieved if
\begin{align}\label{eq:HopfieldLimits}
	\alpha\to 0, \quad \dfrac{\beta}{\alpha} \to 0, \quad \dfrac{\alpha}{\beta Q}\to 0, \quad \dfrac{\lambda}{\mu \eta}\to 0, \quad \dfrac{\lambda}{\beta\eta}\to 0, \quad \zeta:=\dfrac{\alpha\lambda}{\mu\beta\eta}\to 0
\end{align}
assuming that $\xi$ is of order $1$. 
Notice that the first three limit expressions in \eqref{eq:HopfieldLimits} imply that $ \alpha, \, \beta\to 0 $ as well as $ Q\to \infty $. The first two limits expression above imply that the reaction transforming $S$ in $S^*$ is slow, but faster than the reaction transforming $C$ in $S^*$. Furthermore, $ Q\to \infty $ yields that detail balance is strongly violated in the reaction \eqref{reaction:SS}.

\begin{figure}
	\centering
	\begin{minipage}[c]{0.99\linewidth}
		\includegraphics[width=\linewidth]{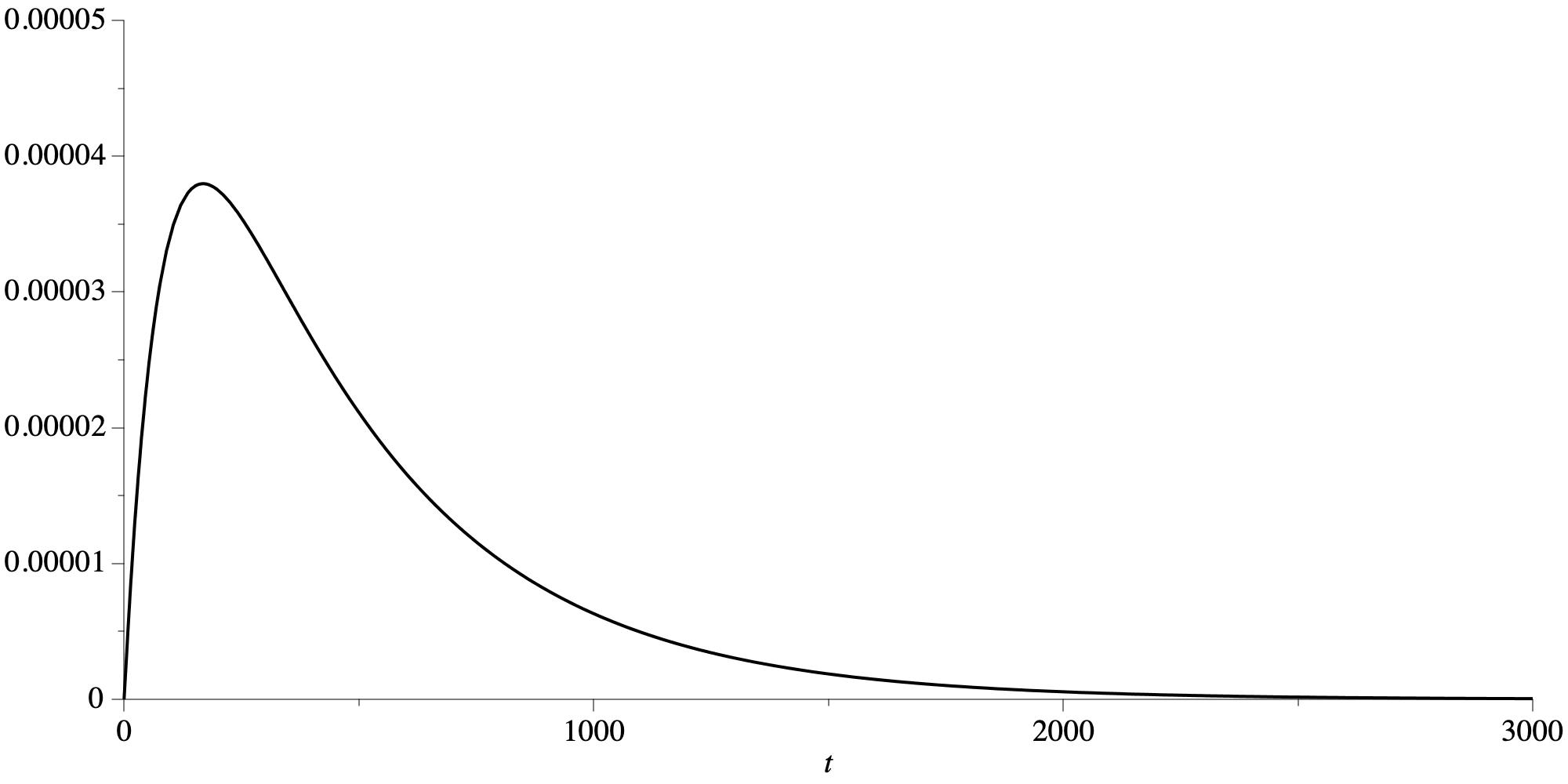}
	\end{minipage}
	\caption{Plot of response function $ \Phi(t) $ for $ \alpha=\mu=\varepsilon $, $ \beta=\varepsilon^2 $, $ Q=1/\varepsilon^2 $, $ \lambda=2\varepsilon^2 $, $ e^{E_1}=2 $, $ e^{E_2}=4/\varepsilon^s $, $ e^{\bar{E}_1}=8 $, $ e^{\bar{E}_2}=16/\varepsilon^s $ where $ \varepsilon=0.01 $, $s=1/2$.}
	\label{fig:HopfieldExample}
\end{figure}

In order to ensure that the last three limits expressions in \eqref{eq:HopfieldLimits} hold we can either assume $ \frac{\lambda}{\mu} \ll \frac{\beta}{\alpha }$ and $\lambda \ll \beta $ and $\eta $ of order $1$.  
Alternatively, it is possible to obtain all the formulas in \eqref{eq:HopfieldLimits} also with $ \eta\to \infty $, we refer to the examples later for this case.
Note that $ \mu\to0 $ is not required. However, if $ \mu $ is of order one, most of the signal $C$ will be lost in times of order $1$ (or shorter) due to the term $ -\mu C $ in \eqref{eq:FullHopfieldModel}.

In addition, we can also compute the total production $ P_\infty  $ under the assumptions \eqref{eq:HopfieldLimits}. We then obtain
\begin{align*}
P_\infty  &= \lambda \hat{S}^*(0) = \dfrac{1+\beta+\beta\xi /\alpha}{1+\beta+\mu+\xi(\mu\beta\eta/\lambda+\beta/\alpha+\mu/\alpha+\xi\mu\beta\eta/\lambda\alpha+\eta\mu/Q\lambda)} =  \dfrac{\zeta}{\xi^2} \left( 1+o(1) \right)
\end{align*}
where $\zeta $ is defined as in \eqref{eq:HopfieldLimits} and goes to zero and $\xi$ is of order $1$. 
We thus observe that the quadratic discrimination is obtained in the proofreading mechanism at the cost of having a very small fraction of molecules of $C$ generating the product $P$.

We now consider specific forms of the response functions for some particular scaling limits of the chemical coefficients. 
More precisely, we set 
\begin{align}\label{eq:HopfieldExample}
	\alpha=\varepsilon, \quad \beta=\varepsilon^2, \quad Q=\varepsilon^{-2},\quad  \mu=\varepsilon, \quad \dfrac{\lambda}{\eta}= \varepsilon^{2+s},
\end{align}
for some $ s\in (0,1] $ and for $ \varepsilon\to 0 $. In this case, we have as $ \varepsilon\to 0 $
\begin{align*}
\frac{\overline P_\infty }{P_\infty } =\theta^2 \left( 1+\mathcal{O}(\varepsilon^s) \right), \quad \frac{\overline P_\infty }{P_\infty } = \dfrac{\varepsilon^s}{\xi^2}\left( 1+\mathcal{O}(\varepsilon^s) \right).
\end{align*}

In Figure \ref{fig:HopfieldExample} we plot the response function $ \Phi $ in the case $ s=1/2 $, $ \varepsilon=0.01 $ and $ \lambda=2\varepsilon^{2} $, $ \eta=2\varepsilon^{-1/2} $ together with \eqref{eq:HopfieldExample}. Notice that the response function $\Phi$ that we obtain from this model is not an exponential function. This means that the dynamics is non-Markovian in general. However, as time tends to infinity $\Phi $ approaches an exponential function. This suggests that the system could be considered to be Markovian for times large enough.

\begin{figure}[ht]
	\centering
	\includegraphics[width=0.3\linewidth]{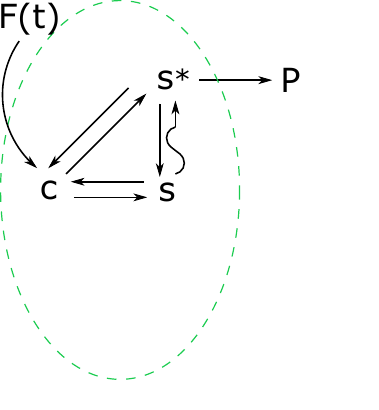}
	\caption{Reactions in the proofreading mechanism due to Hopfield, without the degradation of $C$.}
	\label{fig:HopfieldGraph2}
\end{figure}
The degradation term $ -\mu C $ in the equation for $ dC/dt $ is usually not included in the standard (time-independent) Hopfield  problem as in \cite{Hopfield}, see Figure \ref{fig:HopfieldGraph2}. 
Here, since we consider a time-dependent model, the degradation term is needed to be able to discriminate, in terms of production of $P$, the molecule $C $ and the molecule $\overline C$. 
Indeed, in the absence of degradation, every molecule $C$ or $\overline C$ would synthesize the product $P$. Although, the molecule with more affinity will be much faster than the other one in the production of $P$, as we explain now. 
We compute the average time needed for the molecule $C$ to produce $P$. This is given by 
\[
T=  \frac{\int_0^\infty t \Phi(t) dt }{\int_0^\infty \Phi(t) dt }
\]
where $\Phi(t)= \lambda S^*(t) $ and where $S^*$ is computed from equations \eqref{eq:FullHopfieldModel} with $\mu=0$ and with $C(0)=1$, $S(0 )=S^*(0 )=0$. 
Notice that, since $\frac{d}{dt} (C+S+S^*)= - \lambda S^* $, then 
\[
\int_0^\infty \Phi(t) dt= \lambda \int_0^\infty S^*(t) dt = C(0)=1 , 
\]
hence $T=\int_0^\infty t \Phi(t) dt$. 
Moreover, 
\[
T=\int_0^\infty t \Phi(t) dt=  \lambda \int_0^\infty t  S^*(t) dt =  -  \int_0^\infty t \frac{d}{dt} (C(t)+S(t)+S^*(t) )dt. 
\]
Integrating by parts and using the fact that $C, S, S^*$ decay exponentially as time tends to infinity,  
we obtain that 
\[
T= \int_0^\infty  (C(t)+S(t)+S^*(t) )dt = \hat{C}(0)+\hat{S}(0) + \hat{S^*}(0). 
\]

Performing the Laplace transform in \eqref{eq:FullHopfieldModel} we deduce that 
\[
T= \frac{( \lambda + \beta \eta e^{E_1} ) (k e^{E_1} +\alpha ) + \frac{\alpha}{Q} \eta k e^{E_1} + (k+\beta ) \frac{\alpha \eta}{Q}+k(\lambda +\beta \eta e^{E_1})  }{\lambda (\beta k e^{E_1} +\alpha k +\alpha \beta ) }
\]
We compare $T$ with $\overline T $ under the following assumptions 
\[
\lambda \ll \beta \eta e^{E_1}, \quad \alpha \ll k e^{E_1},\quad  \beta e^{E_1} \ll k,\quad  \frac{\alpha }{Q} \ll \beta e^{E_1}, \quad  e^{E_1} \gg 1 
\]
and deduce that 
\[
\frac{T}{\overline T } \approx e^{-2 (\bar{E}_1-E_1)}. 
\]
This is in agreement with the results in \cite{Hopfield} for solutions with constant fluxes.

\subsection{A linear polymerization model} \label{sec:linear polimerization}
In this section we describe a simple (linear) polymerization model which includes a kinetic proofreading mechanism. 
Polymerization processes are ubiquitous in biology. Some of the most important examples of polymerization in biochemistry are the transcription of DNA in mRNA and for the translation of mRNA into polypeptides. Some of the most widely studied mathematical models of this last process can be found in \cite{macdonald1969concerning,macdonald1968kinetics, pipkin1966kinetics}. 

Most of the models of polymerization used in biology that we are aware of are Markovian.  On the other hand, in all the polymerization models mentioned above kinetic proofreading mechanism and error correcting reactions take place whenever a monomer is incorporated to the polymer (see \cite{PigolottiSartori}).
Hence, as explained in Section \ref{sec:proofreading}, it is natural to formulate a polymerization model for these process in which the addition of monomers takes place in a non-Markovian manner.  
We will describe in this section a very simple polymerization model using the formalism of response functions to describe each polymerization step. 
Notice that we do not try to include in this model the simultaneous reading of a single mRNA strain by several ribosomes, as it has been done in \cite{macdonald1969concerning,macdonald1968kinetics, pipkin1966kinetics}.
Nevertheless we remark that the model considered here is the non-Markovian version of the model in \cite{pipkin1966kinetics}. 

We assume that polymers are characterized by their length $\ell$ and that polymers interact only with monomers.
When a monomer binds to a polymer a sequence of kinetic proofreading reactions starts. 
 After these reactions take place a monomer of size $\ell+1 $ is formed.
Due to the proofreading mechanisms the reaction $(\ell) \rightarrow (\ell +1) $ is non-Markovian and, hence, we will model it using the formalism of response functions. 

Let $n_\ell (t) $ be the number of polymers of length $\ell$ at time $t$. 
Then $n_\ell $ increases in time due to the flux $I_\ell$ of polymers from size $\ell -1$ to size $\ell$ and decreases due to the flux $I_{\ell+1}$ of polymers from size $\ell $ to size $\ell+1$. 
Namely, for $\ell  \geq 1$,  we have
\begin{align} \label{eq:linear polymer}
   \partial_t n_\ell (t) = I_\ell (t)- I_{\ell +1}(t), \quad \ell \geq 1 
   \end{align}
where    
  \begin{align} \label{RE polymerization}
    I_\ell (t)=  \int_{-\infty}^t \Psi(t-s) I_{\ell -1 } (s) ds, \ \ell \geq 2.    
\end{align}
Here, to simplify the analysis,  the response function $\Psi$ is assumed to be independent on $\ell $. 
Moreover, we assume that $\int_0^\infty \Psi(s) ds =1$.

We now study the long-time behaviour of $n_\ell$. 
To this end we assume that there is a constant flux of monomers entering the system, that is $I_1 (t) =1$ for all $t\geq0$. Furthermore, we assume that $n_\ell (0)=0$ for every $\ell \geq 1 $. 
Taking the Laplace transform to all the terms of equation \eqref{RE polymerization} we deduce that 
\[
\hat{I}_\ell(z)= \hat{\Psi} (z) \hat{I}_{\ell -1} (z) \ \text{  for } \  \ell \geq 2  \ \text{ and } \ \hat{I}_{1} (z) = \frac{1}{z}.
\] 
It follows that $\hat{I}_\ell(z)= \hat{\Psi}(z)^{\ell -1 }\hat{I}_1(z) = \frac{1}{z} \hat{\Psi}(z)^{\ell -1 }$. 
Applying the Laplace transform also to all the terms in equation \eqref{eq:linear polymer}, we deduce that $z \hat{n}_\ell (z)= \hat{I}_\ell (z) - \hat{I}_{\ell+1}. $
Hence 
\[ 
\hat{n}_\ell (z)= \frac{1}{z^2} \left( \hat{\Psi}(z)^{\ell-1}  -  \hat{\Psi}(z)^{\ell} \right) = \frac{1}{z^2}  \hat{\Psi}(z)^{\ell-1}  \left(  1 - \hat{\Psi}(z) \right). 
\] 
In order to obtain the long-time asymptotics for $n_\ell $ we need to consider the asymptotics of $\hat{n}_\ell $ for $z$ small. 
For $z$ small the function $\hat{\Psi}(z)$ can be approximated by $ 1- \mu z$
where $\mu= \int_0^\infty s \Psi(s) ds $. 
Then 
\[
\hat{n}_\ell (z)= \frac{1}{z^2}  \hat{\Psi}(z)^{\ell-1} \left(1 -  \hat{\Psi}(z)\right) \approx \frac{1}{z^2}  (1-\mu z)^{\ell-1}  \mu z = \frac{\mu }{z}  (1-\mu z)^{\ell-1} \approx \frac{\mu }{z}  e^{-  \mu z \ell} 
\]
as $z$ goes to zero. 
Inverting the Laplace transform of $\frac{\mu }{z}  e^{-  \mu z \ell} $ we obtain that $n_\ell $ behave like a wave front of the form $n_\ell (t) \approx   \mu  \chi_{[\ell \mu , \infty )} (t) $ for large times. 
Notice that this solution describes a front of concentration in the space of polymer size propagating with speed $1/\mu $. 
A more detailed description of the solution near the edge of the front needs a more precise analysis that we will not pursue here. 

\subsection{A linear chemical model of adaptation} \label{sec:adaptation}
An important concept in Systems Biology is the one of adaptation. 
A system shows adaptation if it reacts to gradients (in time or space) of a chemical, rather than to absolute values of each concentration. 
One of the earliest models of adaptation is the Barkai-Leibler model of bacterial chemotaxis, see
\cite{alon2019introduction, barkai1997robustness}. Other models of adaptation can be found in
\cite{ferrell2016perfect,tang2010defining}. 

In this section we present a very simple model showing adaptation.
This model can be thought as a linear version of the classical Barkai-Leibler model.  This model is suited for a RFE reformulation as we have an input function, a compartment where reactions take place and an output.
Although, in this case the definition of response function has to be slightly different from the one in the previous sections. Indeed, we will prove that the integral of the response function is equal to $0$. This is necessary to induce adaptation as it forces the system to return to its initial state, if the signal remains constant for sufficiently long time.  

The system of ODEs we consider is the following 
\begin{align*}
   & \frac{ d X}{dt} = a Y - b X + s(t) \\
       & \frac{ d Y}{dt} = 1- X
\end{align*}
with $a, b >0$. 
Here $X$ measures the quantity of active receptors. It increases when the signal $s(t)$ starts. Instead, $Y$ is the response regulator protein.
The output of this system will be the quantity of active receptors. 
We remark that the constant source term in the second equation could be the limit value of the Michaelis-Menten law in the saturation regime. 

It is convenient to make the change of variables $\xi = X-1 $ in the above equation, hence 
\begin{align} \label{eq:adaptation}
   & \frac{ d\xi }{dt} = a Y - b \xi - b  + s(t) \\
       & \frac{ d Y}{dt} = - \xi \nonumber
\end{align}
Notice that the steady state of $Y$ is reached when $X=\overline X=1$. We stress that $\overline X$ does not dependent on the signal $s$. 
This is a necessary property to have adaptation. 

We now reformulate equation \eqref{eq:adaptation} using the formalism of response functions. 
We can think about $\{ \xi, Y\}$ as a compartment. 
The output of the chains of reactions taking place in the compartment is $bX$, while the input is $s(t)-b $.
The dynamic inside the compartment $\{ \xi, Y\}$ is driven by the following subsystem of ODEs
\begin{align*}
   & \frac{ d}{dt} \left[ \begin{matrix} \xi \\ Y  \end{matrix} \right]   = A  \left[ \begin{matrix} \xi \\ Y  \end{matrix} \right]   
\end{align*}
where $
A= \left[ \begin{matrix}
     - b & a \\
     -1  & 0
\end{matrix}\right]. $
We compute the response function $\Phi$. Applying Theorem \ref{thm:ReductionODEtoRE} together with Lemma \ref{lem: RE one entrance and general} we deduce that $\Phi(t)=( b, 0) e^{A t } e_1 $.

The matrix $A$ has two eigenvalues $\lambda_{\pm} = \frac{1}{2} \left( - b \pm \sqrt{b^2 - 4 a } \right) $, where $\lambda_- < \lambda_+ < 0$ corresponding to the eigenvectors 
   \[ v_{\pm}=
\left[\begin{matrix}
  1 \\
  - \frac{\lambda_{\mp}}{a}
\end{matrix}\right]. 
\]
As a consequence $\Phi(t)= C_+e^{\lambda_+ t} + C_- e^{\lambda_- t}.$
Using the fact that $\Phi(0)=b $ we deduce that 
\[
C_+= \frac{ b \lambda_+}{\sqrt{b^2-4a}} \ \text{ and } \ C_-= - \frac{ b \lambda_-}{\sqrt{b^2-4a}}. 
\]
Hence, $\Phi(t)=\frac{b}{\sqrt{b^2-4a}} \left(  \lambda_+  e^{\lambda_+ t}  -   \lambda_- e^{\lambda_- t} \right). $
Then we have the following dependence of $X$ on $s(t)$
\[
X(t)= 1+ \dfrac{1}{b}\int_0^t \Phi(v) (s(t-v) - b ) dv. 
\]
Notice that $\int_0^\infty \Phi(v) dv =0$. As a consequence, if $s(t)\rightarrow k $ as $t\rightarrow \infty $ with $k >b $, then $X(t) \rightarrow 1$ as $t \rightarrow \infty $.
This form of the response function is the one that might be expected from a system exhibiting adaptation, i.e. the output converges to a constant value if the signal $s(t)$ approaches a constant value.

\section{Specific examples and applications: non-linear case }
\label{sec:nonlinear}
In this section we present some examples of non-linear RFEs. We will not analyse the mathematics of the models presented here in full detail. The aim of this section is to explain how to apply the formalism of response functions
in order to describe the interactions of different parts of a biochemical system, modelled by non-linear ODEs containing  non-linearities of the form of mass action, or Michaelis-Menten.
In particular, for these types of models it is possible to describe the response of a given compartment to a given set of inputs $I_i$ by means of functionals of the  form
\begin{align} \label{eq:nonlinearresponse}
    & R(t) = \sum_{i} \int_{-\infty }^t  I_i(s)\psi_i(t-s) ds  +   \sum_{i} \sum_{j} \int_{-\infty }^t  \int_{-\infty }^t I_i(s_1) I_j(s_2) \psi_{ij}(t- s_1, t- s_2 ) ds_1 ds_2 \\
    &+ \sum_{i} \sum_{j} \sum_k \int_{-\infty }^t  \int_{-\infty }^t  \int_{-\infty }^t I_i(s_1) I_j(s_2) I_k(s_3)  \psi_{ijk}(t- s_1, t- s_2 , t-s_3) ds_1 ds_2 ds_3+ \dots. \nonumber 
\end{align}
The main new feature of the non-linear models \eqref{eq:nonlinearresponse} compared to the linear ones of the form \eqref{A1} is that the response can contain information about the correlations in time of the inputs $I_i$. 

It is interesting to mention that 
RFEs of the form \eqref{eq:nonlinearresponse} containing only linear and quadratic terms of the inputs can yield a much richer set of responses than the linear RFEs. 
In Section \ref{sec:FF} we show that one of the most common network motifs appearing in metabolic networks, \cite{alon2019introduction}, yield a response function of the form 
\begin{align*}
    & R(t) = \sum_{i} \int_{-\infty }^t  I_i(s)\psi_i(t-s) ds  +   \sum_{i} \sum_{j} \int_{-\infty }^t  \int_{-\infty }^t I_i(s_1) I_j(s_2) \psi_{ij}(t- s_1, t- s_2 ) ds_1 ds_2. 
\end{align*}
Moreover, we will see that these type of response functions allow to describe the most distinguished features associated to this particular network motif. 

Furthermore, in Section \ref{sec:nonlinearPoly} we formulate equations that describe a non-Markovian polymerization model. 

\subsection{Feed Forward network} \label{sec:FF}
We can illustrate the usefulness of the theory of RFEs computing the RFEs for a specific biochemical network, namely the so-called Coherent Type $1$ Feed Forward Loop (C1FFL).
This network motif has been found often in many metabolic networks, 
 we refer to \cite{alon2019introduction} for an extensive description of this motif. 
 
We assume that a signal $S$ activates a protein $X$, which promotes the production of a protein $Y$. 
Then the proteins $X$ and $Y$ jointly produce $Z$. 
This motif can be represented as in Figure \ref{fig:FFL}, using the logic formalism of the AND/OR-gates. 

	\begin{figure}[ht]
		\centering
		\includegraphics[width=0.4\linewidth]{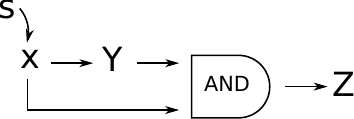}
		\caption{Type $1$ coherent Feed Forward loop.}
		\label{fig:FFL}
	\end{figure}
The main feature of the C1FFL is that it yields a delay in the production of $Z$, upon activation of the signal $S$.
Instead, when the signals $S$ stops, the production of $Z$ stops immediately. Therefore, using the terminology of \cite{alon2019introduction}, this network is a sign-sensitive delay element. 
In this section, we will show a class of response functions exhibiting sign-sensitive delay. 
The biological advantage of this mechanism is that it avoids to produce an immediate response to a fast fluctuating input signal. 
We present now a system of ODEs representing the C1FFL and we examine the REFs type of equations that describe the relation between the input and the output of the system.  

A possible way of modelling the C1FFL is with the following system of ODEs
\begin{align} \label{ODE FFL}
   & \frac{dX}{dt} = S- a X \nonumber \\
    & \frac{dY}{dt}= X-b Y \\
    & \frac{d Z }{dt} = XY- c Z, \nonumber  
\end{align}
where $a,b,c$ are positive constants. 
From this system of equation we deduce that 
\begin{align} \label{eq X}
    X(t)= \int^t_{- \infty } e^{-a (t-s) } S(s) ds, \quad 
  Y(t)= \int_{-\infty }^t e^{- b (t-s)} X(s) ds.
\end{align}
Using \eqref{eq X} we obtain 
\begin{align*}
    & Y(t)= \int_{-\infty }^t e^{- b (t-s)} X(s) =  \int_{-\infty }^t e^{- b (t-s)} \int_{-\infty}^s S(v) e^{-a(s-v) } dv ds \\
     &= e^{-bt} \int_{-\infty }^t \int_v^t e^{s(b-a)} ds S(v) e^{ a v }  dv = \frac{1}{b-a} e^{-at} \int_{-\infty }^t \left( 1 -e^{(v-t) (b-a)}  \right)   e^{ a v }S(v)  dv. 
\end{align*}

Finally, from the equation for $Z$ we infer that 
\begin{align*}
    & Z(t)= \int_{-\infty }^t  X(s) Y(s) e^{-c (t-s) } ds \\
    &=   e^{-ct  }  \int_{-\infty }^t e^{ (c-2a)s } \left[ \int_{-\infty }^s e^{ a v } S(v) dv \right] \left[ \int_{-\infty }^s \frac{\left( 1- e^{(b-a)(w-s) }\right)}{b-a} e^{w a  } S(w) dw  \right] ds. 
\end{align*}
Using Fubini, we deduce that 
\begin{equation} \label{response function for Z}
Z(t)= \int_{-\infty }^t \int_{-\infty }^t  K(t- v, t- w) S(w) S(v)  dw dv
\end{equation}
    where 
    \[
    K(\eta , \xi ) =  \int_{- \min\{\eta , \xi \}}^0 e^{(c-2a)s  }  e^{- a(\xi + \eta  )}     \frac{\left( 1-  e^{ -(b-a)(\xi+ s) } \right)}{b-a} ds, \quad \xi , \eta \geq 0.   \]
    Note that
    \[
    \frac{ 1-  e^{ -(b-a)(\xi+ s) }}{b-a} \geq 0 
    \]
in the domain of integration. See Figure \ref{fig:FFL_Kernel} for a plot in the case $a=c=5$, $b=1$.

\begin{figure}[ht]
	\centering
	\begin{minipage}[c]{0.75\linewidth}
		\includegraphics[width=\linewidth]{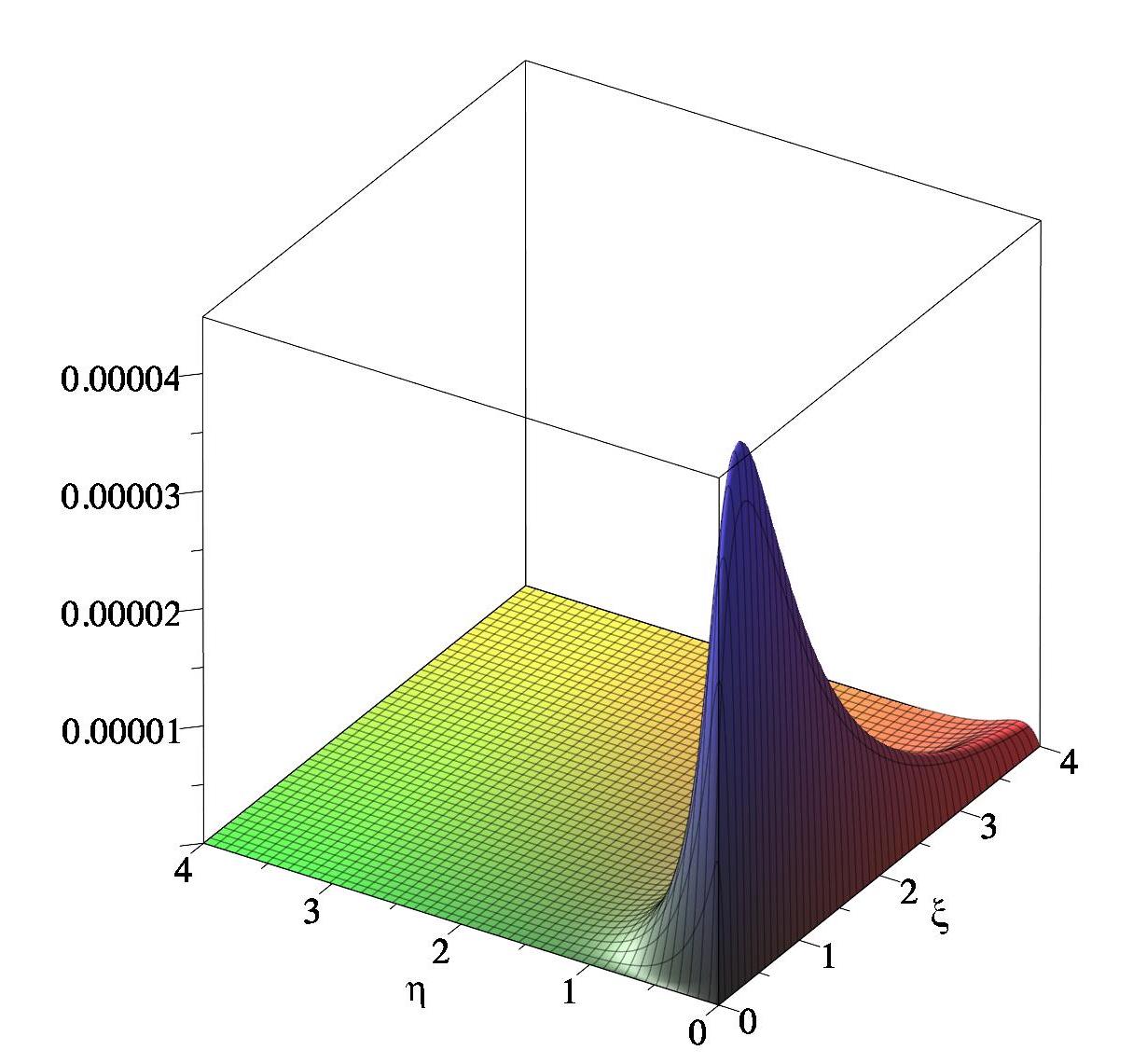}
	\end{minipage}
	\caption{Plot of kernel $ K(\eta,\xi) $ in the case $a=c=5$, $b=1$.}
	\label{fig:FFL_Kernel}
\end{figure}

In order to gain insights on the RFE \eqref{response function for Z} we consider a limit situation.
To this end, we assume that $a \gg 1 $ and that $c \gg 1 $. 
This means that we assume that the time scales $1/a$ and $1/c$ are much smaller than the characteristic time scale in which the signal $S$ changes.
Therefore, \eqref{ODE FFL} implies that for $a \gg 1$ and $c \gg 1 $
\[
X(t) \approx \frac{1}{a} S(t), \  Y(t) \approx \frac{1}{a} \int_{-\infty}^t e^{-b(t-s) } S(s) ds \ \text{ and } \
Z(t)\approx \frac{1}{a^2c} S(t) \int_{-\infty }^t S(s) e^{-b(t-s)} ds. 
\] 
Assume that $S$ has maximum value $S_m$. Then, the function $Z(t) $ reaches values of the order of magnitude of its saturation value $ S_m^2 /a^2 c b $ for times of order $1/b$. This motivates to define the function $\overline S$ such that $S(t) = \overline S (tb)$ and to make the time rescaling $\tau= b t $. We also introduce the new variable $\xi(\tau)=a^2 bc Z(\tau) / S_m^2 $. 
Then for $\tau$ of order $1$ we have
\begin{equation}\label{RE FFL}
\xi(\tau )\approx \overline S(\tau)\int_{-\infty }^\tau \overline S\left( s\right) e^{-(\tau- s)} ds =  \int_{-\infty }^\tau \int_{-\infty }^\tau K_0(\tau-s, \tau- v) \overline S\left( s\right) \overline S(v) ds dv 
\end{equation}
where  
\[
K_0 (s_1, s_2)= e^{-s_1} \delta_0(s_2) \chi_{\mathbb R_*}(s_1) . 
\]
The kernel $K_0 $ is not symmetric, but we can rewrite \eqref{RE FFL} as 
\begin{equation}\label{RFES FFL}
\xi(\tau )\approx  \int_{-\infty }^\tau \int_{-\infty }^\tau \overline{K}_0(\tau-s, \tau- v) \overline S\left( s\right) \overline S(v) ds dv 
\end{equation}
where 
\[
\overline{K}_0(s_1, s_2)=  \frac{1}{2}\left( e^{-s_1} \delta_0(s_2) \chi_{\mathbb R_*}(s_1) +  e^{-s_2} \delta_0(s_1) \chi_{\mathbb R_*}(s_2)\right) . 
\]

We can examine now the response of the system to two signals, namely to $\overline S_1(\tau) =\chi_{[0, \infty)} (\tau ) $ and 
$\overline S_2 (\tau) = \chi_{(-\infty, \overline t]} (\tau)$ with $\overline t >0$.  
Some care is needed to consider signals that are characteristic functions. In practise, the signals $\overline S_1$ and $\overline S_2 $ must be understood as functions that change their values in time scales that are much larger than $1/ba $ and $1/bc $. 
Notice that the response to the signal $\overline S_2 $ yields $ \xi(\tau)=0 $ for every $\tau > \overline t $.
So when the signal $\overline S_2$ suddenly stops, the level of $\xi$ decays instantaneously. On the other hand, in the case of the signal $\overline S_1$ $\xi $ reaches its saturation $1$ after times $\tau$ of order $1$. This is the expected behaviour for the C1FFL system. 
 
We conclude this section by stressing that most of the network motifs in \cite{alon2019introduction} can be formulated and analysed using non-linear RFEs including terms of the form \eqref{nonlinearityintroduction}, as we did in this section for the C1FFL.

\subsection{A nonlinear polymerization model} \label{sec:nonlinearPoly}
We formulate now a non-Markovian, non-linear polymerization model that has some analogies with the classical Becker-D\"oring equations. The  Becker-D\"oring equations are a classical polymerization model that has been extensively studied as a model of phase transitions (see for instance \cite{hingant2017deterministic}). 
The main difference between our model and the classical polymerization models is that in our case the addition of a monomer takes place in  a non-Markovian way.

More precisely, we consider a population of polymers which can have different clusters sizes $\ell $.  
As in Section \ref{sec:linear polimerization}, we assume that polymers grow due to attachment of a monomer and we assume that the reaction $(\ell ) +(1) \rightarrow (\ell+1) $ is non-Markovian. More precisely, we assume that, when a  monomer and a polymer of size $\ell $ bind, they do not form immediately a polymer of size $\ell+1 $. Instead we assume that the addition of a monomer takes place by means of a chain of reactions that we do not try to model in detail, but that we represent with RFEs. Each of the reactions inside the chain can be assumed to be Markovian, but, as we have extensively seen in this paper, the relation between the inputs (in this case a monomer of size $\ell$ and one of size $1$) and the output (which in this case is a monomer of size $\ell+1$) is typically non-Markovian and should be described using the formalism of response functions in this setting. 

The main assumption of this model is that there exists an intermediate state after the attachment of a monomer to a polymer of size $\ell $ that describes the transient state until the polymer can be considered to have size $\ell +1 $.
We denote the density of these intermediate states with $w_\ell $. 
The equations describing the dynamics of the system of polymers is the following
\begin{align*}
&  \frac{d n_\ell}{dt} = - n_1 n_\ell + n_{\ell +1 }-n_\ell + I_\ell \\
  & \frac{d w_\ell }{dt} =  n_1 n_{\ell-1} - I_\ell \\
  & \frac{d n_1}{dt }= 2 n_2 - \sum_{\ell =2 }^\infty n_1 n_\ell - 2 n_1^2+ \sum_{\ell =2}^\infty n_{\ell+1}  + S , 
\end{align*}
where $S$ is a source of monomers. The flux $I_\ell $ is given by 
\[
I_\ell (t)= \int_{-\infty }^t \Psi(t-s) n_1(s) n_{\ell -1}(s) ds \quad \ell \geq 2. 
\]

Notice that $S= \frac{d}{dt} \left( \sum_{\ell=1}^\infty 
 \ell n_\ell + \sum_{\ell=2 } \ell w_\ell   \right) $. 

In contrast with the model described in Section \ref{sec:linear polimerization}, here we have that the influx of polymer of size $\ell $ depends on the history of the number of monomers of size $\ell $ and of size $1$, namely on all the values of $n_\ell $ and $n_1 $ in the time interval $[0, t]$.

\section{Conclusions} \label{sec:conclusion}
In this paper we propose to use the formalism of RFEs to model complex biochemical systems. 
As explained in this paper, the interactions between different parts of biochemical systems can be non-Markovian. 
Since the RFEs can be thought as non-Markovian equations they are well suited to model these interactions. 
The formalism of RFEs has been already extensively used in biology, in the context of population dynamics and epidemiology (see for instance \cite{diekmann1998formulation}).

In this paper we study under which conditions it is possible to reformulate a given model of ODEs as RFEs.
We analyse mainly linear RFEs that are conservative, i.e. they conserve the total number of elements, although we consider also non-linear and non-conservative examples. 
Many applications in biology lead to non-linear models (see  Section \ref{sec:nonlinear}), or non-conservative models (see for instance \cite{zilman2010stochastic}).
It would be interesting to extend the approach presented in this paper to these cases, as it has been done in Population Dynamics and Epidemiology. 

Another possibility would be to consider space dependent response functions. This would allow to consider space dependent models. See for example \cite{galstyan2020proofreading} for an extension of the proofreading model that propose spatial gradients as a way to improve specificity.

As explained in Section \ref{sec:ApproximationIntegralKernel}, the structure of the biochemical reactions of the system impose constraints on the response functions. In particular we proved that systems satisfying the detailed balance condition are associated to completely monotone response functions.
This opens the possibility of deriving the properties of biochemical systems from the   (experimentally measurable) properties of the response functions. 

Writing a model using RFEs allows to describe complex systems of reactions by means of some operators (linear or non-linear) that are characterised by a set of response functions. 
It is therefore relevant to determine if the behaviour of a biochemical system described by a specific system of ODEs can be captured by a REF with response functions having generic features (for instance that are completely monotone).
This would allow to verify the robustness of the behaviour of the system as done in Population Dynamics and Epidemiology. 
An issue that is not considered in this paper is to describe the interactions between different biochemical circuits. This would require to consider a combination of many RFEs models.  

We conclude by stressing that one of the questions addressed in this paper, on the relation between ODEs and RFEs, have some analogies with the one addressed in \cite{diekmann2020finite} and in \cite{diekmann2023systematic} for models appearing in Population Dynamics and Epidemiology. 
The main difference is that in these papers are studied the conditions on the response functions (or kernels) of the renewal equation that allow a reformulation of the system as ODEs.  
Instead, in Section \ref{sec:ODE and RFE} we start from the ODEs models (well suited for systems of biochemical reactions) and we rewrite them using the RFEs formalism, that includes a renewal equation.
As expected, the class of kernels we obtained with this procedure is the same one for which it is shown in \cite{diekmann2020finite} and \cite{diekmann2023systematic} that a reformulation of the REs as ODEs is possible. In particular, it is the class of kernels that correspond to Markovian interactions between the compartments, see Section \ref{subsec:CharMarkov}.

\bigskip 
\bigskip

\textbf{Acknowledgements}
The authors are grateful to K. Thurley for several stimulating discussions that inspired the topic of the paper and for suggesting many references. 
The authors thank also O. Diekmann for many interesting mathematical discussions and for suggesting many references that are relevant to the contents of this paper. 
The authors gratefully acknowledge the financial support of the Hausdorff Research Institute for Mathematics (Bonn), through the mathematics of emergent effects at the University of Bonn
funded through the German Science Foundation (DFG), of the Bonn International Graduate
School of Mathematics at the Hausdorff Center for Mathematics (EXC 2047/1, Project-ID
390685813) funded through the Deutsche Forschungsgemeinschaft (DFG, German Research
Foundation). 
The funders had no role in study design, analysis, decision to publish,
or preparation of the manuscript.

\bibliographystyle{habbrv}
\bibliography{References}
	
\end{document}